\documentclass[10pt]{article}
\usepackage{lmodern}
\usepackage{amsmath}
\usepackage[T1]{fontenc}
\usepackage[utf8]{inputenc}
\usepackage{authblk}
\usepackage{amsfonts}
\usepackage{graphicx}
\usepackage{rotating}
\usepackage{amssymb}
\usepackage[english]{babel}
\usepackage{color}
\usepackage{amsthm}
\usepackage{graphicx}
\usepackage{tkz-euclide}
\tikzset{elegant/.style={smooth,thick,samples=50,cyan}}
\usepackage{color,tikz}
\usetikzlibrary{patterns}
\usetikzlibrary{decorations.pathreplacing,calc}
\usepackage{rotating}
\usepackage{mathrsfs}
\usepackage{makecell}
\usepackage{microtype}
\usepackage{enumerate}
\usepackage{mathscinet}
\usepackage{array}
\usepackage{multirow}
\usepackage{booktabs}
\usepackage{enumerate}
\usepackage[cal=boondoxo,bb=ams]{mathalfa}
\usepackage{hyperref}
\hypersetup{hidelinks}
\usepackage{titlesec}
\pagestyle{myheadings}


\newtheorem{theorem}{Theorem}[section]
\newtheorem{prop}{Proposition}[section]

\newtheorem{coro}{Corollary}[section]
\newtheorem{remark}{Remark}[section]

\newcommand{\ml}{\mathcal}
\newcommand{\mb}{\mathbb}

\DeclareMathOperator{\intt}{int}
\DeclareMathOperator{\extt}{ext}
\DeclareMathOperator{\bdd}{bdd}
\DeclareMathOperator{\divv}{div}
\DeclareMathOperator{\rott}{rot}

\title{Large-time asymptotic behaviors for the classical thermoelastic system}
\author[1]{Wenhui Chen\thanks{Wenhui Chen (wenhui.chen.math@gmail.com)}}
\affil[1]{School of Mathematics and Information Science, Guangzhou University, 510006 Guangzhou, China}
\author[2]{Hiroshi Takeda\thanks{Hiroshi Takeda (h-takeda@fit.ac.jp)}}
\affil[2]{Department of Intelligent Mechanical Engineering, Faculty of Engineering, Fukuoka Institute of Technology,  811-0295 Fukuoka, Japan}

\setlength{\topmargin}{-10mm}
\setlength{\textwidth}{7in}
\setlength{\oddsidemargin}{-8mm}
\setlength{\textheight}{9in}
\setlength{\footskip}{1in}
\date{}
\begin{document}

\maketitle
\begin{abstract}
	\medskip
In this paper, we study the classical thermoelastic system with Fourier's law of heat conduction in the whole space $\mathbb{R}^n$ when $n=1,2,3$, particularly, asymptotic profiles for its elastic displacement as large-time. We discover optimal growth estimates of the elastic displacement when $n=1,2$, whose growth rates coincide with those for the free wave model, whereas when $n=3$ the optimal decay rate is related to the Gaussian kernel. Furthermore, under a new condition for weighted datum, the large-time optimal leading term is firstly introduced by the combination of diffusion-waves, the heat kernel and singular components. We also illustrate a second-order profile of solution by diffusion-waves with weighted $L^1$ datum  as a by-product. These results imply that wave-structure large-time behaviors hold only for the one- and two-dimensional thermoelastic systems.\\
	
	\noindent\textbf{Keywords:} thermoelastic system, Fourier's law, optimal estimate, optimal leading term, asymptotic profile, diffusion-waves.\\
	
	\noindent\textbf{AMS Classification (2020)} 35G40, 35B40, 35Q79
	
\end{abstract}
\fontsize{12}{15}
\selectfont
\section{Introduction}
It is well-known that the reciprocal actions between elastic stresses and thermal behaviors including
temperature difference of an elastic heat conductive media are effectively described by the thermoelastic systems mathematically (see \cite{Chandrasekharaiah=1998,Green-Naghdi=1991,Jiang-Racke=2000} and references therein). The classical model of thermoelasticity (see an extensive review \cite{Chandrasekharaiah=1998}) is constructed by an elasticity, e.g. the isotropic elastic waves, coupled with Fourier's law of heat conduction
\begin{align*}
{q}=-\kappa\nabla\theta 
\end{align*}
with the heat flux ${q}={q}(t,x)\in\mb{R}^n$ and the temperature (relative to some reference temperature) $\theta=\theta(t,x)\in\mb{R}$, where the positive constant $\kappa$ denotes the thermal conductivity. In other words, the classical thermoelastic model is a hyperbolic-parabolic coupled system. Let us consider the Cauchy problem for the classical thermoelastic system in the physical dimensions $n=1,2,3$, namely,
\begin{align}\label{Linear-Thermoelastic-Type}
	\begin{cases}
		u_{tt}-a^2\Delta u-(b^2-a^2)\nabla\divv u+\gamma_1\nabla\theta=0,\\
		\theta_{t}-\kappa\Delta\theta+\gamma_2\divv u_{t}=0, \\
	u(0,x)=u_0(x),\ u_t(0,x)=u_1(x),\ \theta(0,x)=\theta_0(x),
\end{cases}
\end{align}
with $(t,x)\in\mb{R}_+\times\mb{R}^n$, where two unknowns $u=u(t,x)\in\mb{R}^n$ and $\theta=\theta(t,x)\in\mb{R}$ stand for the elastic displacement and the temperature difference to the equilibrium state, respectively. The speeds for propagation of the longitudinal P-wave and of the transverse S-wave are denoted by $b$ and $a$, individually, fulfilling $b>a>0$, whose combinations $a^2$ and $b^2-2a^2$ are the well-known Lam\'e constants. Some physical properties of the underlying isotropic medium for \eqref{Linear-Thermoelastic-Type} are described by the thermal conductivity $\kappa>0$ and the thermoelastic coupling coefficients $\gamma_1,\gamma_2$ such that $\gamma_1\gamma_2>0$.

 We recall now some historical background to the classical thermoelasticity 
 \begin{align}\label{General}
 	\begin{cases}
 		u_{tt}-a^2\Delta u-(b^2-a^2)\nabla\divv u+\gamma_1\nabla\theta=0,\\
 		\theta_{t}-\kappa\Delta\theta+\gamma_2\divv u_{t}=0.
 	\end{cases}
 \end{align}
  Actually, the theory of thermoelasticity is very classical and was first founded by J.M.C. Duhamel, K.E. Neumann and W. Thomson, etc. in  18th century. Therewith, the pioneering paper \cite{LL=1953} applied the classical thermodynamics methods to deduce the coupled system of thermoelasticity. In the homogeneous and isotropic medium with vanishing external body forces as well as heats, the classical thermoelastic system \eqref{General} can be derived in general. In recent thirty years, the classical thermoelastic systems  \eqref{General} in bounded or unbounded domains have caught a lot of attentions (see \cite{Dafermos=1968,Racke-1987,Racke-1990,Racke-1990-02,Racke-Shibata=1991,Munoz=1992,Munoz-Racke=1996,Racke-Wang=1998,Reissig-Wang=1999,Jiang-Racke=2000,Wang=2003,Jachmann-Reissig=2009,Jachmann-Wirth=2010} and references therein).  According to the theme of this work, we just briefly introduce the
progressive progress in the corresponding Cauchy problem \eqref{Linear-Thermoelastic-Type}. By employing the Helmholtz decomposition, the solution may be split into $$u=u^{p_0}+ u^{s_0},$$ where the solenoidal part $u^{s_0}$ solves the well-studied wave equation (see \eqref{us-waves} soon afterwards) and the potential part $u^{p_0}$  fulfills
\begin{align}\label{up-coupled}
	\begin{cases}
		u^{p_0}_{tt}-b^2\Delta u^{p_0}+\gamma_1\nabla\theta=0,\\
		\theta_{t}-\kappa\Delta\theta+\gamma_2\divv u^{p_0}_{t}=0,\\
		u^{p_0}(0,x)=u_0^{p_0}(x),\ u^{p_0}_t(0,x)=u_1^{p_0}(x),\ \theta(0,x)=\theta_0(x),
	\end{cases}
\end{align}
carrying  $\rott u^{p_0}=0$ with $(t,x)\in\mb{R}_+\times\mb{R}^n$ for $n=1,2,3$. The authors of \cite{Jiang-Racke=2000} constructed the standard energy $(b\divv u^{p_0},u^{p_0}_t,\theta)\in\mb{R}^{n+2}$ and employed energy methods for the first-order coupled system associated with anti-symmetric properties of coefficient matrix to derive 
\begin{align}\label{2.1}
	\|(b\divv u^{p_0},u^{p_0}_t,\theta)(t,\cdot)\|_{(L^2)^{n+2}}\lesssim (1+t)^{-\frac{n}{4}}\|(b\divv u^{p_0}_0,u^{p_0}_1,\theta_0)\|_{(L^2\cap L^1)^{n+2}}.
\end{align}
Later, the paper \cite{Jachmann-Reissig=2009} applied the so-called \emph{diagonalization procedure} (it may decouple the system and search for the dominant parts of vector unknown in local zones) for a suitable micro-energy $(u^{p_0}_t\pm ib|D|u^{p_0},\theta)$ in which the operator $|D|$ owns the symbol $|\xi|$. Applying WKB analysis and multi-steps diagonalization method, the authors derived $L^2$ well-posedness, propagation of singularities, $L^p-L^q$ decay estimates and parabolic-type diffusion phenomenon. Particularly, the $(L^2\cap L^1)-L^2$ estimate conforms to the same decay rate as the one in \eqref{2.1}. For these reasons, in the framework of energy unknowns, the thermoelastic system \eqref{up-coupled} has parabolic-type decay properties whose decay rate comes from the Gaussian kernel $\ml{F}^{-1}_{\xi\to x}(\mathrm{e}^{-c|\xi|^2t})$.

 As we mentioned in the above, there are a lot of related works begun from the past century in terms of the classical thermoelasticity \eqref{Linear-Thermoelastic-Type}. Some suitable energy terms, e.g. the standard energy $(b\divv u^{p_0}, u_t^{p_0},\theta)$ in \cite[Equation (4.65)]{Jiang-Racke=2000}, decay polynomially with parabolic-type in the $L^2$ framework, and some asymptotic profiles of these energy terms fulfill parabolic-structure reference systems, e.g. diffusion phenomena stated in \cite{Jachmann-Reissig=2009}. Consequently, some natural questions are:
 \begin{itemize}
 	\item Whether or not one can describe more detailed information of the elastic displacement $u$ or $u^{p_0}$ for large-time?
 	\item How does the elastic waves part influence on the large-time behaviors?
 \end{itemize}
 It will play a crucial role when a model equips some nonlinear terms containing $u$ itself, for instance, \cite{Racke-Wang=1998,Reissig-Wang=1999,Kirane-Tatar=2001,Qin-Rivera=2004} considered the nonlinear source term $\ml{N}(u)$ appearing on the first equation of \eqref{Linear-Thermoelastic-Type}. However, to the best of knowledge of authors, the optimal estimates and optimal leading terms of the elastic displacement in thermoelasticity are still open even for the irrotational situation. Here, the optimality is guaranteed by the same behaviors of upper bound and lower bound estimates for large-time (see, for example, \cite{Ikehata=2014,Ikehata-Onodera=2017,Michihisa=2021} for  the viscoelastic damped waves). We will partially give answers to the above questions by deep understanding of the effect of thermal damping generated by Fourier's law.

The main purpose of this work is to study asymptotic behaviors of the potential part $u^{p_0}$ to the Cauchy problem \eqref{up-coupled} because of the well-established properties for the solenoidal part $u^{s_0}$.  Note that $u^{p_0}:=(u^{1,p_0},\cdots,u^{n,p_0})\in\mb{R}^n$ with $x\in\mb{R}^n$, and $u^{k,p_0}$ is a  element among them. By reducing the hyperbolic-parabolic coupled system to the third-order (in time) evolution equation with respect to $u^{k,p_0}$, we employ WKB analysis and Fourier analysis to characterize the optimal large-time estimates
\begin{align*}
	\|u^{k,p_0}(t,\cdot)\|_{L^2}^2\simeq\begin{cases}
		t&\mbox{if}\ \ n=1,\\
		\ln t&\mbox{if}\ \ n=2,\\
		t^{-\frac{1}{2}}&\mbox{if}\ \ n=3,
	\end{cases}
\end{align*}
with $L^1 \cap L^{2}$ datum if
\begin{align*}
|P_{u_1^{k,p_0}}|+|P_{\theta_0}|\neq0	
\end{align*}
carrying the mean $P_f=\int_{\mb{R}^n}f(x)\mathrm{d}x$, where the solution grows polynomially when $n=1$ and  logarithmically when $n=2$, but decay polynomially in higher-dimensions. Particularly, due to the identical growth rates for the thermoelastic system and the wave equation when $n=1,2$, we claim the decisive part of the one- and two-dimensional thermoelastic system \eqref{Linear-Thermoelastic-Type} is the waves part as $t\gg1$, whereas Fourier's law of heat conduction exerts crucial influence when $n=3$ (see the detail explanation in Remark \ref{Rem-01}). This wave-type large-time behavior is the new discovery, and different from the parabolic-type decay properties in the previous literature of thermoelasticity. One of the main difficulties is to understand interplay among dissipative part, oscillating part and the singularity (as $|\xi|\to0$) for the next multiplier: 
\begin{align*}
\ml{M}_n(t,|\xi|):=\frac{1}{|\xi|}\left(\mathrm{e}^{-\beta_0|\xi|^2t}-\cos(\beta_1|\xi|t)\mathrm{e}^{-\beta_2|\xi|^2t}\right)
\end{align*}
in the $L^2$ norm, where the positive constants are
\begin{align*}
	 \beta_0=\frac{\kappa b^2}{b^2+\gamma_1\gamma_2},\ \ \beta_1=\sqrt{b^2+\gamma_1\gamma_2} \ \  \mbox{and} \ \ \beta_2=\frac{\kappa\gamma_1\gamma_2}{2(b^2+\gamma_1\gamma_2)}.
\end{align*}

Furthermore, introducing the leading term $\varphi^k=\varphi^k(t,x)$ such that
\begin{align*}
\varphi^k:=\ml{F}_{\xi\to x}^{-1}\left(\frac{\sin(\beta_1|\xi|t)}{\beta_1|\xi|}\mathrm{e}^{-\beta_2|\xi|^2t}\right)P_{u_1^{k,p_0}} -\frac{i\gamma_1}{\beta_1^2}\ml{R}_k\ml{F}^{-1}_{\xi\to x}\left[\frac{1}{|\xi|}\left(\cos(\beta_1|\xi|t)\mathrm{e}^{-\beta_2|\xi|^2t}-\mathrm{e}^{-\beta_0|\xi|^2t}\right)\right]P_{\theta_0} 
\end{align*}
with  the Riesz transform $\ml{R}_k$, we derive the optimal decay estimates of the error term
\begin{align*}
\|u^{k,p_0}(t,\cdot)-\varphi^k(t,\cdot)\|_{L^2}^2\simeq t^{-\frac{n}{2}}
\end{align*}
for $t\gg1 $ and $n=1,2,3$ if 
\begin{align*}
|P_{u_0^{k,p_0}}|+  |P_{u_1^{k,p_0}}| +|P_{\theta_0}|  + |M_{u_1^{k,p_0}}|+ |M_{\theta_{0}}| \neq0
\end{align*}
carrying the weighted mean $M_f=\int_{\mb{R}^n}xf(x)\mathrm{d}x$. That is to say that the function $\varphi^k(t,x)$, which is the combination of diffusion-waves and heat kernel with the singular component (for small frequencies), is the optimal leading term for the thermoelastic system. As a by-product, we also investigate a second-order asymptotic profile of solution via higher-order diffusive-waves and heat model in the $L^2$ framework. To sum up, novel first- and second-order asymptotic profiles for large-time characterized by diffusion-waves (instead of parabolic structure reference systems in the previous researches) for the thermoelastic system are discovered.

To end this paper, we state some concluding remarks related to other models of thermoelasticity in Section \ref{Sect-Final}, and a new proof of optimal growth estimates for the free wave equation in Appendix \ref{Appendix-A}, which improves the assumption of initial datum in \cite[Theorems 1.1, and 1.2]{Ikehata=2022-wave}.

\medskip
\noindent\textbf{Notations:} Let us introduce some notations that will be used in this paper. We take the following zones localizing in the Fourier space:
\begin{align*}
	\ml{Z}_{\intt}(\varepsilon_0)&:=\{\xi\in\mb{R}^n:\ |\xi|\leqslant\varepsilon_0\ll1\},\\
	\ml{Z}_{\bdd}(\varepsilon_0,N_0)&:=\{\xi\in\mb{R}^n:\ \varepsilon_0\leqslant |\xi|\leqslant N_0\},\\  
	\ml{Z}_{\extt}(N_0)&:=\{\xi\in\mb{R}^n:\ |\xi|\geqslant N_0\gg1\}.
\end{align*}
Moreover, the cut-off functions $\chi_{\intt}(\xi),\chi_{\bdd}(\xi),\chi_{\extt}(\xi)\in \mathcal{C}^{\infty}$ having their supports in their corresponding zones $\ml{Z}_{\intt}(\varepsilon_0)$, $\ml{Z}_{\bdd}(\varepsilon_0/2,2N_0)$ and $\ml{Z}_{\extt}(N_0)$, respectively, such that
\begin{align*}
	\chi_{\bdd}(\xi)=1-\chi_{\intt}(\xi)-\chi_{\extt}(\xi)\ \ \mbox{for all}\ \ \xi \in \mb{R}^n.
\end{align*}
The symbol of pseudo-differential operator $|D|$ is denoted by $|\xi|$.

The notation $f\lesssim g$ means that there exists a positive constant $C$ fulfilling $f\leqslant Cg$, which may be changed in different places, analogously, for $f\gtrsim g$. Furthermore, the asymptotic relation  $f\simeq  g$ holds if and only if $f\lesssim g$ and $f\gtrsim g$ concurrently. We take the notation $\circ$ as the inner product in Euclidean space, i.e. $f\circ g:=\langle f,g\rangle $ for $f,g\in\mb{R}^n$.

Let us recall the weighted $L^1$ space as follows:
\begin{align*}
	L^{1,1}:=\left\{f\in L^1 \ \big|\ \|f\|_{L^{1,1}}:=\int_{\mb{R}^n}(1+|x|)|f(x)|\mathrm{d}x<\infty \right\}.
\end{align*}
The (weighted) means of a summable function $f$ are denoted by
\begin{align*}
	\mb{R}\ni P_f:=\int_{\mb{R}^n}f(x)\mathrm{d}x	\ \ \mbox{and}\ \ \mb{R}^n\ni M_f:=\int_{\mb{R}^n}xf(x)\mathrm{d}x.
\end{align*}
To complete the introduction, we take the following time-dependent function:
\begin{align}\label{Decay-fun}
	\ml{A}_n(t):=\begin{cases}
		\sqrt{t}&\mbox{if}\ \ n=1,\\
		\sqrt{\ln t}&\mbox{if}\ \ n=2,\\
		t^{-\frac{1}{4}}&\mbox{if}\ \ n= 3,
	\end{cases}
\end{align}
to be the growth ($n=1,2$) or decay ($n=3$) rates later.

\section{Main results}
\subsection{Pretreatments by the Helmholtz decomposition}\label{Subsec-Helmholtz}
Before stating the main results of this work, let us simplify the model \eqref{Linear-Thermoelastic-Type} and turn to our essential target hyperbolic-parabolic coupled system \eqref{up-coupled}, in which the one-dimensional case can be trivially got without using this approach. According to the Helmholtz decomposition
\begin{align*}
L^2=\overline{\nabla H^1}\oplus\ml{D}_0 \ \ \mbox{for}\ \ n=2,3,
\end{align*}
with the function spaces
\begin{align*}
\nabla H^1&:=\left\{\nabla\psi\ |\  \psi\in H^1\right\},\\
\ml{D}_0&:=\left\{u\in L^2\ |\ (\nabla\phi,u)_{L^2}=0\ \ \mbox{with}\ \ \forall \phi\in \ml{C}_0^{\infty}\right\},
\end{align*}
and thus by such technique the solution $u=u(t,x)$ to \eqref{Linear-Thermoelastic-Type} can be decomposed into a potential part and a solenoidal part such that
\begin{align}\label{Helm-solution}
	u=u^{p_0}+ u^{s_0}.
\end{align}
Here, the vector $u^{p_0}=u^{p_0}(t,x)$ is rotation-free and $u^{s_0}=u^{s_0}(t,x)$ is divergence-free in a weak sense. We clarify the nomenclature: \emph{the potential solution} by the unknown $u^{p_0}$ for the sake of briefness. Applying the identity
\begin{align*}
\nabla\divv u=\nabla\times(\nabla\times u)+\Delta u	
\end{align*}
in two- and three-dimensions, we are able to decompose \eqref{Linear-Thermoelastic-Type} into the wave model
\begin{align}\label{us-waves}
	\begin{cases}
	u^{s_0}_{tt}-a^2\Delta u^{s_0}=0,\\
	u^{s_0}(0,x)=u^{s_0}_0(x),\ u^{s_0}_t(0,x)=u^{s_0}_1(x),
	\end{cases}
\end{align}
and the hyperbolic-parabolic coupled system \eqref{up-coupled} with $(t,x)\in\mb{R}_+\times\mb{R}^n$, under the conditions $\divv u^{s_0}=0$ as well as $\rott u^{p_0}=0$. Although the last treatment were considered when $n=2,3$, one may notice that the one-dimensional coupled system \eqref{Linear-Thermoelastic-Type} is  exactly the same as the coupled system \eqref{up-coupled} when $n=1$. Therefore, in the one-dimensional case, we may understand $u=u^{p_0}$.

 The studies for linear wave equation \eqref{us-waves} are well-known, for example, the author of \cite[Theorems 1.1 and 1.2]{Ikehata=2022-wave} stated the optimal estimates for the weighted $L^1$ data, or the improved result in Corollary \ref{Coro-Appendix} requiring $L^1$ regularity only for the second data.  Again, our main task in this paper, consequently, will be immediately turned into:
\begin{center}
\it  Asymptotic behaviors for the hyperbolic-parabolic coupled system \eqref{up-coupled} when $n=1,2,3$.
\end{center}
Particularly, large-time asymptotic profiles including some optimal estimates, optimal leading terms as well as second-order approximations for the potential solution $u^{p_0}$ with some weighted $L^1$ datum are of interest. 

\begin{remark}
After obtaining some qualitative properties of solutions to \eqref{up-coupled} and referring those for the wave equation \eqref{us-waves}, we may claim the desired properties for the original thermoealstic system \eqref{Linear-Thermoelastic-Type} in two- and three-dimensions according to the relation \eqref{Helm-solution}. Straightforwardly, due to $\rott u\equiv0$ when $x\in\mb{R}$, our results for the coupled system \eqref{up-coupled} exactly coincide with those for the original system \eqref{Linear-Thermoelastic-Type}  when $n=1$.
\end{remark}

\begin{remark}
If one considers the irrotational thermoelastic system (for instance, \cite[Sections 3 and 4]{Racke=2003}) that is \eqref{Linear-Thermoelastic-Type} carrying $\rott u\equiv0$, then the irrotational system will turn into \eqref{up-coupled}, and all results in this paper immediately work for $u=(u^1,\cdots,u^n)$ with $n=1,2,3$.
\end{remark}

\subsection{Main result on optimal estimates of the potential solution}
Let us state the first result in this paper concerning $L^2$ optimal estimates of each element of the potential solution $u^{p_0}$, particularly, it implies infinite time $L^2$-blowup of solution when $n=1,2$. However, the solution will decay polynomially when $n=3$.
\begin{theorem}\label{Thm-Optimal-Est}
Let us consider the hyperbolic-parabolic coupled system \eqref{up-coupled} for $n=1,2,3$ carrying initial datum $u^{k,p_0}_0,u^{k,p_0}_1,\theta_0\in L^2\cap L^{1}$ with $k=1,\dots,n$. Then, the elastic displacement $u^{k,p_0}$ satisfies the following optimal estimates:
	\begin{align}\label{Estimates:Upper-Bound}
		\ml{A}_n(t)|\mb{A}|\lesssim \|u^{k,p_0}(t,\cdot)\|_{L^2}\lesssim\ml{A}_n(t)\left\|\left(u_0^{k,p_0},u_1^{k,p_0},\theta_0\right)\right\|_{(L^2\cap L^1)^3}
	\end{align}
for $t\gg1$, where the time-dependent coefficient $\ml{A}_n(t)$ was defined in \eqref{Decay-fun}, and the positive constant on the left-hand side is defined by
\begin{align}\label{Non-trivial-01}
\mb{A}^2:=|P_{u_1^{k,p_0}}|^2+ |P_{\theta_0}|^2.
\end{align}
Namely, provided that $|\mb{A}|\neq0$, then the optimal estimates $\|u^{k,p_0}(t,\cdot)\|_{L^2}\simeq\ml{A}_n(t)$ hold for $n=1,2,3$ and any $t\gg1$.
\end{theorem}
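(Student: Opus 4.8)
The plan is to reduce the coupled system \eqref{up-coupled} to a scalar third-order (in time) evolution equation for each component $u^{k,p_0}$, pass to the Fourier side, and then analyze the resulting Fourier multiplier via its characteristic roots. Writing $v=u^{k,p_0}$, eliminating $\theta$ from the two equations in \eqref{up-coupled} (differentiate the first in $t$, use the second to replace $\theta_t$, and use the first again to express $\nabla\theta$) yields an equation of the form $v_{ttt}-\kappa\Delta v_{tt}+(b^2+\gamma_1\gamma_2)(-\Delta)v_t-\kappa b^2\Delta(-\Delta)v=0$ on the Fourier side, i.e. the symbol equation $\lambda^3+\kappa|\xi|^2\lambda^2+(b^2+\gamma_1\gamma_2)|\xi|^2\lambda+\kappa b^2|\xi|^4=0$. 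The key is the small-frequency asymptotics of its three roots: a Taylor/Newton-polygon expansion gives one real root $\lambda_1=-\beta_0|\xi|^2+O(|\xi|^4)$ and a conjugate pair $\lambda_{2,3}=\pm i\beta_1|\xi|-\beta_2|\xi|^2+O(|\xi|^3)$ with the constants $\beta_0,\beta_1,\beta_2$ exactly as stated in the introduction. Representing $\widehat{v}(t,\xi)$ as a linear combination $\sum_j c_j(\xi)e^{\lambda_j t}$ with coefficients determined by the data $\widehat{u}^{k,p_0}_0,\widehat{u}^{k,p_0}_1,\widehat{\theta}_0$ (this requires inverting a Vandermonde-type matrix in $\lambda_1,\lambda_2,\lambda_3$, which is where the factor $1/|\xi|$ enters, since differences of roots are $O(|\xi|)$), one finds that the dominant contribution for small $|\xi|$ is precisely $\mathcal{M}_n(t,|\xi|)$ times a combination of $\widehat{u}^{k,p_0}_1(0)$ and $\widehat{\theta}_0(0)$, plus bounded-multiplier remainders.

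\textbf{Splitting into frequency zones.} Using the cut-off functions $\chi_{\intt},\chi_{\bdd},\chi_{\extt}$, I would estimate $\|u^{k,p_0}(t,\cdot)\|_{L^2}$ by splitting $\mathbb{R}^n=\mathcal{Z}_{\intt}(\varepsilon_0)\cup\mathcal{Z}_{\bdd}\cup\mathcal{Z}_{\extt}(N_0)$. On $\mathcal{Z}_{\bdd}$ and $\mathcal{Z}_{\extt}$, the real parts of all three roots are bounded away from zero (one checks that the parabolic smoothing $e^{-\kappa|\xi|^2 t}$-type decay and the exponential decay in the high-frequency regime both dominate), so those contributions decay exponentially in $t$ and are harmless; this uses standard WKB/diagonalization arguments and the sign conditions $b>a>0$, $\kappa>0$, $\gamma_1\gamma_2>0$. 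The entire large-time behavior therefore comes from $\mathcal{Z}_{\intt}(\varepsilon_0)$, where the analysis reduces to estimating $\| \chi_{\intt}(\xi)\,\mathcal{M}_n(t,|\xi|)\,\widehat{g}(\xi)\|_{L^2}$ and lower-order analogues, with $g$ built from the data. The upper bound follows by Plancherel, bounding $|\widehat{g}(\xi)|\le\|g\|_{L^1}$, and computing $\int_{|\xi|\le\varepsilon_0}|\mathcal{M}_n(t,|\xi|)|^2\,d\xi$: the substitution $\eta=\sqrt{t}\,\xi$ (or $\eta=t\xi$ tuned to the oscillation) shows this integral behaves like $t$ for $n=1$, $\ln t$ for $n=2$, and $t^{-1/2}$ for $n=3$ — exactly $\mathcal{A}_n(t)^2$ — because near $|\xi|=0$ the integrand is $\sim|\xi|^{-2}$ regularized at scale $|\xi|\sim t^{-1/2}$, and the volume element $|\xi|^{n-1}d|\xi|$ makes the difference in dimension.

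\textbf{The lower bound and the main obstacle.} The harder half is the lower bound $\mathcal{A}_n(t)|\mathbf{A}|\lesssim\|u^{k,p_0}(t,\cdot)\|_{L^2}$. The standard device is to write $\widehat{g}(\xi)=P_g+(\widehat{g}(\xi)-P_g)$, where $|\widehat{g}(\xi)-P_g|\lesssim|\xi|\,\|g\|_{L^{1,1}}$ — but here the datum is only $L^1$, not $L^{1,1}$, so this splitting is not directly available and the remainder $\widehat{g}(\xi)-P_g$ is merely $o(1)$ as $\xi\to0$ without a rate. The way around this (mirroring the Appendix's improved treatment of the wave equation) is to isolate the leading term $P_g\cdot\chi_{\intt}(\xi)\mathcal{M}_n(t,|\xi|)$, bound its $L^2$ norm from below by the explicit integral computation above, and then show the error $\|\chi_{\intt}(\xi)(\widehat{g}(\xi)-P_g)\mathcal{M}_n(t,|\xi|)\|_{L^2}$ is $o(\mathcal{A}_n(t))$ using only $\widehat{g}\in C_0\cap L^\infty$: split the frequency integral at $|\xi|=\delta$, use smallness of $|\widehat{g}(\xi)-P_g|$ for $|\xi|<\delta$ to control that part uniformly in $t$, and use the decay of $\int_{\delta<|\xi|<\varepsilon_0}|\mathcal{M}_n|^2\,d\xi$ relative to $\mathcal{A}_n(t)^2$ for the complementary part, finally letting $\delta\to0$. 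One must also verify that the two scalar means $P_{u_1^{k,p_0}}$ and $P_{\theta_0}$ enter the leading coefficient non-degenerately, i.e. that the combination multiplying $\mathcal{M}_n$ really is $\gtrsim|\mathbf{A}|$ and cannot cancel — this is a short linear-algebra check on the residue of the Vandermonde inversion at $\xi=0$, and it is the one place where the precise structure of the coupling (the coefficients $\gamma_1,\gamma_2,b^2$) is used to guarantee $|\mathbf{A}|\neq0\Rightarrow$ nonzero leading profile. I expect the delicate interplay of oscillation, parabolic decay, and the $|\xi|^{-1}$ singularity inside $\mathcal{M}_n$ — especially making the lower bound work under the weakened $L^1$ (rather than $L^{1,1}$) hypothesis — to be the main technical obstacle.
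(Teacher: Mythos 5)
Your reduction to the third-order equation, the root expansions, the three frequency zones, and the upper bound via Plancherel, $|\widehat{g}|\leqslant\|g\|_{L^1}$ and the scaling computation are essentially the paper's argument. The genuine gap is in the lower bound. You describe the small-frequency leading term as ``$\ml{M}_n(t,|\xi|)$ times a combination of $\widehat{u}^{k,p_0}_1(0)$ and $\widehat{\theta}_0(0)$, plus bounded-multiplier remainders'' and propose to secure non-degeneracy by a linear-algebra check that the scalar combination multiplying $\ml{M}_n$ is $\gtrsim|\mb{A}|$. That cannot work as stated: a single scalar combination $c_1P_{u_1^{k,p_0}}+c_2P_{\theta_0}$ can vanish while $|\mb{A}|\neq0$, so no residue computation makes it comparable to $|\mb{A}|$. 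In fact the leading term is not one multiplier times a scalar: the mean $P_{u_1^{k,p_0}}$ is carried by the damped-sine kernel $\widehat{\ml{G}}_0=\frac{\sin(\beta_1|\xi|t)}{\beta_1|\xi|}\mathrm{e}^{-\beta_2|\xi|^2t}$, while $P_{\theta_0}$ is carried by the distinct kernel $\widehat{\ml{G}}_{1,k}$, which is (up to the Riesz factor $\xi_k/|\xi|$) precisely $\ml{M}_n$; both kernels have $L^2$ size $\ml{A}_n(t)$, so neither is a remainder. The missing mechanism that forbids cancellation is structural: $\widehat{\ml{G}}_0$ is real and $\widehat{\ml{G}}_{1,k}$ is purely imaginary, so by Plancherel $\|\widehat{\ml{G}}_0P_{u_1^{k,p_0}}+\widehat{\ml{G}}_{1,k}P_{\theta_0}\|_{L^2}^2=\|\widehat{\ml{G}}_0\|_{L^2}^2|P_{u_1^{k,p_0}}|^2+\|\widehat{\ml{G}}_{1,k}\|_{L^2}^2|P_{\theta_0}|^2$ (the Riesz factor being removed by polar coordinates), and it is this Pythagorean splitting, combined with a lower bound of order $\ml{A}_n(t)$ for \emph{each} kernel separately, that yields $\ml{A}_n(t)|\mb{A}|$.

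A second, related gap: you take the lower bound $\|\ml{M}_n(t,|\xi|)\|_{L^2}\gtrsim\ml{A}_n(t)$ as following from ``the explicit integral computation above'', i.e.\ the same scaling that gave the upper bound. Because of the oscillating factor $\cos(\beta_1|\xi|t)$ inside the difference, squaring and integrating does not obviously reproduce that rate from below, and this is exactly the paper's hardest step (its Proposition on $\ml{M}_n$), which is dimension-specific: for $n=1$ one shrinks to an annulus $|\xi|\in[\alpha_0t^{-1},2\alpha_0t^{-1}]$ where $\sin^2(\tfrac{\beta_1}{2}|\xi|t)$ is bounded below and absorbs the error terms; for $n=2$ the logarithm only survives after an integration by parts showing that oscillatory integrals of the type $\int\cos(\beta_1\sqrt{t}\sigma)\sigma^{-1}\mathrm{e}^{-c\sigma^2}\mathrm{d}\sigma$ stay $O(1)$; for $n=3$ one invokes the Riemann--Lebesgue lemma. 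Your device for the $L^1$-only hypothesis (split $\widehat{g}-P_g$ at a small frequency $\delta$, use continuity of $\widehat{g}$ at $0$, let $\delta\to0$) is sound in spirit and parallels the paper's $o(\ml{A}_n(t))$ error estimates, but without the orthogonality argument and the genuine lower bound for the oscillatory multipliers the left-hand inequality in \eqref{Estimates:Upper-Bound} is not established.
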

\begin{remark}\label{Rem-theta}
According to the relation
\begin{align*}
	\theta(t,x)=-\frac{1}{i\gamma_1}\ml{F}^{-1}_{\xi\to x}\left[\frac{1}{\xi_k}\left(\widehat{u}^{k,p_0}_{tt}(t,\xi)+b^2|\xi|^2\widehat{u}^{k,p_0}(t,\xi)\right)\right],
\end{align*}
one may obtain optimal estimates for the temperature variable without any additional difficulty. Precisely, if $u_0^{k,p_0}\in H^1\cap L^{1}$ and $u_1^{k,p_0},\theta_0\in L^2\cap L^{1}$ equipping $|\mb{A}|\neq0$ defined in \eqref{Non-trivial-01}, then the optimal decay estimates $\|\theta(t,\cdot)\|_{L^2}\simeq t^{-\frac{n}{4}}$ hold for $n=1,2,3$ any any $t\gg1$.
\end{remark}
\begin{remark}\label{Rem-01}
Let us recall the large-time behavior of the free wave model
\begin{align}\label{Wave-Eq}
\begin{cases}
w_{tt}-\Delta w=0,\\
w(0,x)=w_0(x), \ w_t(0,x)=w_1(x),
\end{cases}
\end{align}
with $(t,x)\in\mb{R}_+\times\mb{R}^n$ for $n=1,2$. Under $(L^2\cap L^1)\times (L^2\cap L^{1,1})$ regularity with $|P_{w_1}|\neq0$ assumption on initial datum, the recent work \cite{Ikehata=2022-wave} got the optimal growth estimates
\begin{align}\label{Est-wave}
\|w(t,\cdot)\|_{L^2}\simeq\begin{cases}
\sqrt{t}&\mbox{if}\ \ n=1,\\
\sqrt{\ln t}&\mbox{if}\ \ n=2,
\end{cases}
\end{align}
for $t\gg1$. Note that the weighted $L^1$ regularity for the second data will be relaxed by $L^1$ only in Corollary \ref{Coro-Appendix} with the aid of a different idea. In Theorem \ref{Thm-Optimal-Est}, we indeed obtained the optimal estimates of the thermoelasticity \eqref{up-coupled} (or the wave equation \eqref{Wave-Eq} coupled with Fourier's law of heat conduction) by the following one:
\begin{align*}
\|u^{k,p_0}(t,\cdot)\|_{L^2}\simeq\begin{cases}
\sqrt{t}&\mbox{if}\ \ n=1,\\
\sqrt{\ln t}&\mbox{if}\ \ n=2,\\
t^{-\frac{1}{4}}&\mbox{if}\ \ n=3,
\end{cases}
\end{align*}
for $t\gg1$. We may notice that the growth rates for the free waves and the thermoelastic system are exactly the same if $n=1,2$, but the solution decays polynomially with the aid of thermal dissipation generated by Fourier's law. One may see Table \ref{tab:table1} in detail.
\renewcommand\arraystretch{1.4}
\begin{table}[h!]
	\begin{center}
		\caption{Influence from the wave model and Fourier's law of heat conduction}
		\medskip
		\label{tab:table1}
		\begin{tabular}{cccc} 
			\toprule
			Dimensions & $n=1$ & $n=2$ & $n=3$\\
			\midrule
			Free waves property & $\sqrt{t}$ & $\sqrt{\log t}$ & -- \\
			Heats property (Fourier's law) & $t^{-\frac{1}{4}}$& $t^{-\frac{1}{2}}$& $t^{-\frac{3}{4}}$\\  
			Thermoelastic system property & $\sqrt{t}$ & $\sqrt{\log t}$ & $t^{-\frac{1}{4}}=t^{\frac{1}{2}}\cdot t^{-\frac{3}{4}}$\\
			\hline 
			Crucial influence & Waves & Waves & Waves + Fourier's law\\
			\bottomrule
			\multicolumn{4}{l}{\emph{$*$The terminology ``property'' specializes the time-dependent coefficient in the $L^2$}}\\
						\multicolumn{4}{l}{ \ \ \emph{estimates of the solution.}} 
		\end{tabular}
	\end{center}
\end{table}

\noindent It is worth noting that all large-time properties in Table \ref{tab:table1} are optimal in the sense of same behaviors for upper bound and lower bound of the elastic displacement in the $L^2$ norm. In particular, concerning $n=1,2$, we may observe that the large-time properties of the thermoelastic system \eqref{up-coupled} are not influenced by the heat conduction. Analogously, this phenomenon is also valid for the classical thermoelastic system \eqref{Linear-Thermoelastic-Type} due to the decomposition \eqref{Helm-solution} and the fact that the wave solution $u^{s_0}$ fulfills the estimates \eqref{Est-wave}.
\end{remark}
\begin{remark}
One may find the growth/decay phenomena between the strong damping (or the so-called viscoelastic damping) and thermal dissipation from Fourier's law on the wave model \eqref{Wave-Eq} are the same. Indeed, the authors of \cite{Ikehata=2014,Ikehata-Onodera=2017} showed that the solution of strongly damped waves (i.e. the Cauchy problem for $w_{tt}-\Delta w-\Delta w_t=0$) in the $L^2$ norm satisfies the estimates with $\ml{A}_n(t)$ to be optimal growth or decay rates. Importantly, Theorem \ref{Thm-Optimal-Est} implies the same estimates of the thermoelastic system as those in the strongly damped waves.
\end{remark}

\subsection{Main result on asymptotic profiles of the potential solution}
Let us introduce two crucial components for the leading terms as follows:
\begin{align}\label{Pg1}
	\ml{G}_0(t,x)&:=\ml{F}_{\xi\to x}^{-1}\left(\frac{\sin(\sqrt{b^2+\gamma_1\gamma_2}|\xi|t)}{\sqrt{b^2+\gamma_1\gamma_2}|\xi|}\mathrm{e}^{-\frac{\kappa\gamma_1\gamma_2}{2(b^2+\gamma_1\gamma_2)}|\xi|^2t}\right),\\
	\ml{G}_{1,k}(t,x)&:=\ml{R}_k\ml{F}^{-1}_{\xi\to x}\left[\frac{-i\gamma_1}{(b^2+\gamma_1\gamma_2)|\xi|}\left(\cos\left(\sqrt{b^2+\gamma_1\gamma_2}|\xi|t\right)\mathrm{e}^{-\frac{\kappa\gamma_1\gamma_2}{2(b^2+\gamma_1\gamma_2)}|\xi|^2t}-\mathrm{e}^{-\frac{\kappa b^2}{b^2+\gamma_1\gamma_2}|\xi|^2t}\right)\right].\label{Pg2}
\end{align}
The first function comes from 
\begin{align*}
\mbox{the diffusion-waves:}\quad \ml{F}^{-1}_{\xi\to x}\left(\frac{\sin(\beta_1|\xi|t)}{\beta_1|\xi|}\mathrm{e}^{-\beta_2|\xi|^2t}\right).
\end{align*}
Moreover, the second one may be regarded as a linear combination of
\begin{align*}
\mbox{the diffusion-waves:}\quad \ml{F}^{-1}_{\xi\to x}\left(\cos(\beta_1|\xi|t)\mathrm{e}^{-\beta_2|\xi|^2t}\right)\ \ \mbox{and}\ \ \mbox{the Gaussian kernel:}\quad \ml{F}^{-1}_{\xi\to x}\left(\mathrm{e}^{-\beta_0|\xi|^2t}\right)
\end{align*}
associated with the Riesz transform  $\ml{R}_k$ and the singularity $|\xi|^{-1}$ near $|\xi|=0$, where the multiplier is defined by
\begin{align*}
\widehat{\ml{R}_kf}(\xi):=-\frac{\xi_k}{|\xi|}\widehat{f}(\xi)\ \ \mbox{for}\ \ k=1,\dots,n.
\end{align*}
By taking the leading term
\begin{align*}
\varphi^{k}(t,x):=\ml{G}_0(t,x)P_{u_1^{k,p_0}}+\ml{G}_{1,k}(t,x)P_{\theta_0}\ \ \mbox{for}\ \ k=1,\dots,n,
\end{align*}
we state the optimal estimates for the solution subtracting it in the $L^2$ norm.

\begin{theorem}\label{Thm-Optimal-Lead}
	Let us consider the hyperbolic-parabolic coupled system \eqref{up-coupled} for $n=1,2,3$ carrying initial datum $u^{k,p_0}_0\in L^2\cap L^{1}$ and $u^{k,p_0}_1,\theta_0\in L^2\cap L^{1,1}$ with $k=1,\dots,n$. Then, the elastic displacement $u^{k,p_0}$ satisfies the following optimal refined estimates:
	\begin{align}\label{Estimates:refined-Upper-Bound}
t^{-\frac{n}{4}}|\mb{B}|\lesssim\|u^{k,p_0}(t,\cdot)-\varphi^{k}(t,\cdot)\|_{L^2}\lesssim t^{-\frac{n}{4}}\left\|\left(u^{k,p_0}_0,u^{k,p_0}_1,\theta_0\right)\right\|_{(L^2\cap L^1)\times(L^2\cap L^{1,1})^2}
	\end{align}
for $t\gg1$, where the positive constant on the left-hand side is defined by
\begin{align}\label{Conditioin_B}
\mb{B}^2:=  |P_{u_0^{k,p_0}}|^{2}+  |P_{u_1^{k,p_0}}|^{2} +|P_{\theta_0}|^2  + |M_{u_1^{k,p_0}}|^{2}+ |M_{\theta_{0}}|^{2}.
\end{align}
Namely, provided that $|\mb{B}|\neq0$, then the optimal estimates $\|u^{k,p_0}(t,\cdot)-\varphi^k(t,\cdot)\|_{L^2}\simeq t^{-\frac{n}{4}}$ hold for $n=1,2,3$ and any $t\gg1$.
\end{theorem}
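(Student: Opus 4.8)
The plan is to reduce each component $u^{k,p_0}$ to its Fourier representation, isolate the small-frequency behavior, and match it against $\widehat{\varphi^k}$. First I would recall from the treatment leading to Theorem 2.1 the scalar third-order (in time) equation satisfied by $u^{k,p_0}$, whose characteristic roots in the zone $\ml{Z}_{\intt}(\varepsilon_0)$ admit the asymptotic expansion: one real root $\lambda_0(|\xi|) = -\beta_0|\xi|^2 + O(|\xi|^4)$ and a conjugate pair $\lambda_{\pm}(|\xi|) = \pm i\beta_1|\xi| - \beta_2|\xi|^2 + O(|\xi|^3)$, with $\beta_0,\beta_1,\beta_2$ as defined in the introduction. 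Using the explicit amplitude functions from Cramer's rule (the coefficients of the initial data $\widehat{u}^{k,p_0}_0,\widehat{u}^{k,p_0}_1,\widehat{\theta}_0$ in the solution formula), I would write $\widehat{u}^{k,p_0}(t,\xi)\chi_{\intt}(\xi)$ as a sum of terms of the form $(\text{amplitude})\,\mathrm{e}^{\lambda_{\bullet}(|\xi|)t}$ and then Taylor-expand both the amplitudes and the exponentials around $|\xi|=0$, keeping the leading contribution and bounding the remainder. The key identity to extract is that the $|\xi|^{-1}$-singular leading parts, after inserting $\widehat{u}^{k,p_0}_1(\xi)\approx P_{u_1^{k,p_0}}+i\xi\cdot(\ldots)$ etc., reproduce exactly $\widehat{\ml{G}_0}(t,\xi)P_{u_1^{k,p_0}}+\widehat{\ml{G}_{1,k}}(t,\xi)P_{\theta_0}=\widehat{\varphi^k}(t,\xi)$ up to the stated cut-off; the remaining factors $\widehat{u}^{k,p_0}_0$ contributes only through $P_{u_0^{k,p_0}}$ times a regular (non-singular) diffusion-wave amplitude, which is itself $O(t^{-n/4-\ldots})$ and hence absorbed into the error, consistent with the presence of $P_{u_0^{k,p_0}}$ in $\mb{B}$.

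For the \textbf{upper bound} I would split $\|u^{k,p_0}-\varphi^k\|_{L^2}^2$ via Plancherel into the three zones. On $\ml{Z}_{\bdd}$ and $\ml{Z}_{\extt}$ the multiplier $\widehat{\varphi^k}$ is there to be \emph{subtracted formally} with its own cut-off, or one shows $\widehat{u}^{k,p_0}$ decays exponentially (from the spectral gap / positivity of the real parts of the roots away from $|\xi|=0$, established en route to Theorem 2.1) while $\widehat{\varphi^k}$ is Schwartz-tail controlled; either way these zones give $O(\mathrm{e}^{-ct})$. The whole game is therefore on $\ml{Z}_{\intt}(\varepsilon_0)$: there I estimate $\||\xi|^{-1}\big(\text{amplitude remainder}\big)\mathrm{e}^{-c|\xi|^2t}\|_{L^2(\ml{Z}_{\intt})}$. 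Using the weighted $L^{1,1}$ hypothesis on $u_1^{k,p_0},\theta_0$, the difference $\widehat{u}^{k,p_0}_1(\xi)-P_{u_1^{k,p_0}}$ is $O(|\xi|)$ uniformly and, more precisely, equals $-i\xi\cdot M_{u_1^{k,p_0}}+o(|\xi|)$; since there is also a factor $|\xi|^{-1}$ in front, the $M$-term survives at order $O(1)$ (no singularity) while the $o(|\xi|)$-term and the $|\xi|^2$-corrections from expanding $\mathrm{e}^{\lambda t}$ and the amplitudes both gain a genuine extra power of $|\xi|$, producing $\|\,|\xi|^{0}\cdot|\xi|\cdot\mathrm{e}^{-c|\xi|^2t}\|_{L^2(\mathbb{R}^n)}\lesssim t^{-n/4-1/2}$ or better. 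The delicate point is that the \emph{oscillating} diffusion-wave factor $\cos(\beta_1|\xi|t)$ or $\sin(\beta_1|\xi|t)/(\beta_1|\xi|)$ does not help decay by itself, so one must be careful that after subtracting $\varphi^k$ every surviving term carries a compensating power of $|\xi|$; a standard computation of $\int_{\mathbb{R}^n}|\xi|^{2}\mathrm{e}^{-c|\xi|^2 t}\,d\xi \simeq t^{-n/2-1}$ then yields the claimed $t^{-n/4}$ bound with room to spare — in fact the sharp rate $t^{-n/4}$ (not faster) is forced precisely by the $M$-contributions and the non-singular $P_{u_0^{k,p_0}}$-amplitude, which are $O(1)$ in $|\xi|$ and multiply $\mathrm{e}^{-c|\xi|^2 t}$, giving exactly $t^{-n/4}$ in $L^2$.

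For the \textbf{lower bound} I would follow the now-standard Ikehata-type argument: write $\widehat{u}^{k,p_0}-\widehat{\varphi^k}$ on $\ml{Z}_{\intt}$ as (leading term) $+$ (remainder), where the leading term is an explicit combination of $|\xi|^0\cdot\mathrm{e}^{-c|\xi|^2t}$, oscillatory-diffusive pieces, and the linear-in-$\xi$ pieces $i\xi\cdot M_{u_1^{k,p_0}}$, $i\xi\cdot M_{\theta_0}$, $|\xi|$-order corrections proportional to $P_{u_0^{k,p_0}}$; apply the reverse triangle inequality $\|\text{leading}\|_{L^2}-\|\text{remainder}\|_{L^2}$, bound the remainder by $o(t^{-n/4})$ using one more power of $|\xi|$ or $\log$-type gains, and compute $\|\text{leading}\|_{L^2(\ml{Z}_{\intt})}\gtrsim t^{-n/4}|\mb B|$ by either an explicit Gaussian integral or, when cross terms threaten cancellation, by integrating against a test frequency profile / using the mean value theorem on $t\mapsto$ the oscillatory factor over a window, exactly as in the proof of Theorem 2.1. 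The main obstacle I anticipate is bookkeeping the interaction of the \emph{three} structurally different surviving pieces in $\varphi^k$ and the error — the pure Gaussian $\mathrm{e}^{-\beta_0|\xi|^2t}$, the damped oscillation $\cos(\beta_1|\xi|t)\mathrm{e}^{-\beta_2|\xi|^2t}$, and the linear $\xi$-moments — and verifying that no conspiratorial cancellation among them (for some sign configuration of $P_{u_0^{k,p_0}},P_{u_1^{k,p_0}},P_{\theta_0},M_{u_1^{k,p_0}},M_{\theta_0}$) can destroy the $t^{-n/4}$ lower bound; handling this cleanly is where the orthogonality in $L^2$ of the odd-in-$\xi$ (the $M$-linear) versus even-in-$\xi$ (the $P$-constant and Gaussian) parts, together with the distinct oscillation frequency $\beta_1\neq 0$ separating the diffusion-wave from the pure Gaussian, will be the decisive structural facts to exploit.
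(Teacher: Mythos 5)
Your overall strategy (subtract an explicit leading object on $\ml{Z}_{\intt}(\varepsilon_0)$, kill the other zones by exponential decay, bound the remainder by $o(t^{-\frac{n}{4}})$, and lower-bound the leading object using parity/orthogonality in $\xi$ and Riemann--Lebesgue for the oscillatory cross terms) is indeed the skeleton of the paper's proof, and your upper-bound argument via $|\widehat{f}(\xi)-P_f|\lesssim|\xi|\,\|f\|_{L^{1,1}}$ is essentially the paper's \eqref{Sub-tract-01}--\eqref{Sub-tract-large-01}. However, there is a genuine gap in the lower bound, in two places. First, your bookkeeping of which surviving terms are of size $t^{-\frac{n}{4}}$ is wrong: you claim that after subtracting $\varphi^k$ the ``$|\xi|^2$-corrections from expanding $\mathrm{e}^{\lambda t}$ and the amplitudes both gain a genuine extra power of $|\xi|$, producing $t^{-\frac{n}{4}-\frac{1}{2}}$ or better,'' so that only the $M$-moments and the $P_{u_0^{k,p_0}}$-amplitude matter. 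In fact the $O(1)$ (non-singular) amplitude corrections attached to $\widehat{u}_1^{k,p_0}$ and $\widehat{\theta}_0$ (in the paper: $\ml{G}_3$, $\ml{G}_{4,k}$, coming e.g.\ from the ratio $2\lambda_{\mathrm{R}}/(2\lambda_{\mathrm{R}}\lambda_1-\lambda_{\mathrm{I}}^2-\lambda_{\mathrm{R}}^2-\lambda_1^2)\to \kappa\gamma_1\gamma_2/(b^2+\gamma_1\gamma_2)^2$) and, crucially, the phase corrections $\sin(\lambda_{\mathrm{I}}t)-\sin(\beta_1|\xi|t)=O(|\xi|^3t)$ produce terms whose symbols behave like $|\xi|^2t\,\mathrm{e}^{-c|\xi|^2t}$; their $L^2$ norms are exactly of order $t^{-\frac{n}{4}}$, not smaller. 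This is precisely why the paper needs the explicit $|\xi|^3$-coefficient in the expansion of $\lambda_{2,3}$ (Proposition \ref{Prop-Expansions}) to build $\ml{H}_0,\ml{H}_{1,k}$, and the explicit second-order amplitudes $\ml{G}_2,\ml{G}_3,\ml{G}_{4,k}$, i.e.\ the full second-order profile $\psi^k$ of Corollary \ref{Coro-Second-order}. With only $\lambda_{2,3}=\pm i\beta_1|\xi|-\beta_2|\xi|^2+O(|\xi|^3)$, as in your sketch, your ``remainder'' is not $o(t^{-\frac{n}{4}})$, and your leading term contains no contribution from $P_{u_1^{k,p_0}}$ or $P_{\theta_0}$, so the claimed lower bound $t^{-\frac{n}{4}}|\mb{B}|$ with $\mb{B}$ as in \eqref{Conditioin_B} cannot come out of it.

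Second, the non-cancellation issue you explicitly defer is the actual core of the proof, and parity plus ``distinct frequency $\beta_1\neq0$'' alone do not settle it. In the paper one has $\|\widehat{\psi}^k\|_{L^2}^2=\|\widehat{\ml{E}}_7\|_{L^2}^2+\|\widehat{\ml{E}}_8\|_{L^2}^2$ (real/imaginary, equivalently even/odd, split), but inside the odd part the functionals $M_{u_1^{k,p_0}}$ and $P_{\theta_0}$ still mix, and inside the even part $P_{u_0^{k,p_0}}$, $P_{u_1^{k,p_0}}$ and $M_{\theta_0}$ mix, each multiplied by different radial weights ($1$, $|\xi|^2t$) and different kernels (Gaussian vs.\ damped cosine). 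The paper resolves this by explicit spherical moment computations ($\int_{\mb{S}^{n-1}}\omega_j\omega_k\,\mathrm{d}\sigma_\omega$ and $\int_{\mb{S}^{n-1}}|\omega_k(\omega\circ M)|^2\mathrm{d}\sigma_\omega$, the latter from \cite{Takeda=2022}), Gamma-function evaluations of the radial integrals weighted by $r^{n-1}, r^{n+1}t, r^{n+3}t^2$, Riemann--Lebesgue to discard the cross terms $\mb{B}_{4,2}+2\mb{B}_5$, and completing-the-square estimates showing the resulting quadratic forms in $(M_{u_1^{k,p_0}}^k,P_{\theta_0})$ and $(P_{u_0^{k,p_0}},P_{u_1^{k,p_0}})$ are positive definite (this uses $A_1\neq0$, i.e.\ the exact constants in the expansions). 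None of this is supplied or replaced by an alternative argument in your proposal, so as it stands the lower bound in \eqref{Estimates:refined-Upper-Bound} is not proved.
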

\begin{remark}
With the same reason as Remark \ref{Rem-theta}, by constructing 
\begin{align*}
\widetilde{\varphi}(t,x)=\frac{1}{i\gamma_1}\frac{1}{\ml{R}_k|D|}(\partial_t^2-b^2\Delta)\varphi^k(t,x),
\end{align*}
it is not difficult to get the optimal decay estimates $\|\theta(t,\cdot)-\widetilde{\varphi}(t,\cdot)\|_{L^2}\simeq t^{-\frac{1}{2}-\frac{n}{4}}$ for $n=1,2,3$ and any $t\gg1$, where we assumed $u_0^{k,p_0}\in H^1\cap L^{1}$ and $u_1^{k,p_0},\theta_0\in L^2\cap L^{1,1}$ with $|\mb{B}|\neq0$.
\end{remark}
\begin{remark}
The new condition $|\mb{B}|\neq0$ from \eqref{Conditioin_B} guarantees non-vanishing lower bounds in optimal estimates. This condition still holds even $|\mb{A}|=0$ defined in \eqref{Non-trivial-01}.
\end{remark}
\begin{remark}
	One of our new observations is the optimal leading term $\varphi^k(t,x)$ having diffusion-waves structure rather than parabolic structures for energy terms. It tells us the importance of wave properties in the large-time behaviors of the classical thermoelastic system. The differences between the leading terms of each element in $(u^{1,p_0},\cdots, u^{k,p_0})$ are reflected by two parts: the corresponding initial data $u^{k,p_0}_1$ and the Riesz transform $\ml{R}_k$ in the function $\ml{G}_{1,k}(t,x)$.
\end{remark}
\begin{remark}
In the proof of Theorem \ref{Thm-Optimal-Lead}, we will demonstrate
\begin{align*}
\|\varphi^k(t,\cdot)\|_{L^2}\simeq\ml{A}_n(t)\left(|P_{u_1^{k,p_0}}|+|P_{\theta_0}|\right)
\end{align*}
for $t\gg1$ and $n=1,2,3$. In the view of the optimal estimates in Theorem \ref{Thm-Optimal-Est}, by subtracting the leading term $\varphi^k(t,\cdot)$ in the $L^2$ norm, we arrive at the faster and optimal decay estimate \eqref{Estimates:refined-Upper-Bound}, which hints large-time behaviors since
\begin{align*}
\lim\limits_{t\to\infty}\|u^{k,p_0}(t,\cdot)-\varphi^k(t,\cdot)\|_{L^2}=0.
\end{align*}
 The decay rate has been improved by $t^{-\frac{3}{4}}$ when $n=1$; $(t\ln t)^{-\frac{1}{2}}$ when $n=2$; $t^{-\frac{1}{2}}$ when $n=3$.
\end{remark}

As a by-product of Theorem \ref{Thm-Optimal-Lead}, we may get second-order asymptotic profile for large-time. Let us introduce a function
\begin{align*}
\psi^k(t,x)&:=\nabla\ml{G}_0(t,x)\circ M_{u_1^{k,p_0}}+\nabla\ml{G}_{1,k}(t,x)\circ M_{\theta_0}+\ml{G}_2(t,x)P_{u_0^{k,p_0}}\\
&\quad\ +\big(\ml{H}_0(t,x)+\ml{G}_3(t,x)\big)P_{u_1^{k,p_0}}+\big(\ml{H}_{1,k}(t,x)+\ml{G}_{4,k}(t,x)\big)P_{\theta_0}.
\end{align*}
In the above, the auxiliary functions are defined by
\begin{align*}
\ml{G}_2(t,x)&:=\frac{\gamma_1\gamma_2}{b^2+\gamma_1\gamma_2}\ml{F}^{-1}_{\xi\to x}\left(\mathrm{e}^{-\frac{\kappa b^2}{b^2+\gamma_1\gamma_2}|\xi|^2t}\right)+\frac{b^2}{b^2+\gamma_1\gamma_2}\ml{F}^{-1}_{\xi\to x}\left[\cos\left(\sqrt{b^2+\gamma_1\gamma_2}|\xi|t\right)\mathrm{e}^{-\frac{\kappa\gamma_1\gamma_2}{2(b^2+\gamma_1\gamma_2)}|\xi|^2t}\right],\\ 
\ml{G}_3(t,x)&:=\frac{\kappa\gamma_1\gamma_2}{(b^2+\gamma_1\gamma_2)^2}\ml{F}_{\xi\to x}^{-1}\left(\mathrm{e}^{-\frac{\kappa b^2}{b^2+\gamma_1\gamma_2}|\xi|^2t}-\cos\left(\sqrt{b^2+\gamma_1\gamma_2}|\xi|t\right)\mathrm{e}^{-\frac{\kappa \gamma_1\gamma_2}{2(b^2+\gamma_1\gamma_2)}|\xi|^2t}\right),\\ 
\ml{G}_{4,k}(t,x)&:=-\frac{i\gamma_1\kappa(\gamma_1\gamma_2-2b^2)}{(b^2+\gamma_1\gamma_2)^{5/2}}\ml{R}_k\ml{F}^{-1}_{\xi\to x}\left[\sin\left(\sqrt{b^2+\gamma_1\gamma_2}|\xi|t\right)\mathrm{e}^{-\frac{\kappa\gamma_1\gamma_2}{2(b^2+\gamma_1\gamma_2)}|\xi|^2t}\right],
\end{align*}
and
\begin{align*}
\ml{H}_0(t,x)&:=-\frac{\kappa^2\gamma_1\gamma_2(\gamma_1\gamma_2+4b^2)}{8(b^2+\gamma_1\gamma_2)^3}t\,\ml{F}^{-1}_{\xi\to x}\left[|\xi|^2\cos\left(\sqrt{b^2+\gamma_1\gamma_2}|\xi|t\right)\mathrm{e}^{-\frac{\kappa\gamma_1\gamma_2}{2(b^2+\gamma_1\gamma_2)}|\xi|^2t}\right],\\
\ml{H}_{1,k}(t,x)&:=\frac{i\kappa^2\gamma_1^2\gamma_2(\gamma_1\gamma_2+4b^2)}{8(b^2+\gamma_1\gamma_2)^{7/2}}t\,\ml{F}^{-1}_{\xi\to x}\left[\xi_k|\xi|\sin\left(\sqrt{b^2+\gamma_1\gamma_2}|\xi|t\right)\mathrm{e}^{-\frac{\kappa\gamma_1\gamma_2}{2(b^2+\gamma_1\gamma_2)}|\xi|^2t}\right].
\end{align*}
We underline that all these functions are combined by the diffusion-waves, the heat kernel and the Riesz transform. Then, we may state  a faster decay estimate by subtracting the new profile $\psi^k(t,x)$.
\begin{coro}\label{Coro-Second-order}
	Let us consider the hyperbolic-parabolic coupled system \eqref{up-coupled} for $n=1,2,3$ carrying initial datum $u^{k,p_0}_0\in L^2\cap L^{1}$ and $u^{k,p_0}_1,\theta_0\in L^2\cap L^{1,1}$ with $k=1,\dots,n$. Then, the elastic displacement $u^{k,p_0}$ satisfies the following further refined estimates:
	\begin{align}\label{Second-order-expansion}
		\|u^{k,p_0}(t,\cdot)-\varphi^k(t,\cdot)-\psi^k(t,\cdot)\|_{L^2}=o(t^{-\frac{n}{4}})
	\end{align}
for $t\gg1$, where the right-hand side depends on the norm of initial datum.
\end{coro}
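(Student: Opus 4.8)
The plan is to push the low-frequency asymptotic expansion behind Theorem~\ref{Thm-Optimal-Lead} one order further, so that the leftover falls below the $t^{-n/4}$ scale. As a first reduction, recall that the component $u^{k,p_0}$ solves the scalar third-order (in $t$) equation with Fourier symbol $\lambda^{3}+\kappa|\xi|^{2}\lambda^{2}+(b^{2}+\gamma_{1}\gamma_{2})|\xi|^{2}\lambda+\kappa b^{2}|\xi|^{4}$; since $\gamma_{1}\gamma_{2}>0$ makes the Routh--Hurwitz combination $a_{2}a_{1}-a_{0}=\kappa\gamma_{1}\gamma_{2}|\xi|^{4}$ positive, all three roots lie in the open left half-plane for every $\xi\neq0$, while for $|\xi|\to\infty$ the oscillating pair tends to $\pm ib|\xi|-\tfrac{\gamma_{1}\gamma_{2}}{2\kappa}+o(1)$. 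Hence $\max_{j}\mathrm{Re}\,\lambda_{j}(|\xi|)\leqslant-c_{0}<0$ on $\ml{Z}_{\bdd}(\varepsilon_{0}/2,2N_{0})\cup\ml{Z}_{\extt}(N_{0})$ (no loss of regularity), so, as already used for Theorems~\ref{Thm-Optimal-Est} and~\ref{Thm-Optimal-Lead}, the bounded/high-frequency part of $u^{k,p_0}$ decays like $\mathrm{e}^{-c_{0}t}\|(u^{k,p_0}_{0},u^{k,p_0}_{1},\theta_{0})\|_{(L^{2})^{3}}$; the same holds for the bounded/high-frequency parts of $\varphi^{k}$ and $\psi^{k}$, because $\ml{G}_{0},\ml{G}_{1,k},\ml{G}_{2},\ml{G}_{3},\ml{G}_{4,k}$ are inverse Fourier transforms of (bounded oscillation)$\,\mathrm{e}^{-c|\xi|^{2}t}$ and $\ml{H}_{0},\ml{H}_{1,k}$ of $t|\xi|^{2}\,$(bounded oscillation)$\,\mathrm{e}^{-c|\xi|^{2}t}$, with $P_{f},M_{f}$ bounded by $\|f\|_{L^{1,1}}$. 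Thus only $\chi_{\intt}(D)\bigl(u^{k,p_0}-\varphi^{k}-\psi^{k}\bigr)$ needs to be controlled in $L^{2}$.

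On $\ml{Z}_{\intt}(\varepsilon_{0})$ I would use the exact representation $\widehat{u}^{k,p_0}(t,\xi)=\sum_{j\in\{0,+,-\}}\mathrm{e}^{\lambda_{j}(|\xi|)t}\mathcal{E}_{j}(t,\xi)$, with $\lambda_{0}=-\beta_{0}|\xi|^{2}+O(|\xi|^{4})$, $\lambda_{\pm}=\pm i\bigl(\beta_{1}|\xi|+\beta_{3}|\xi|^{3}\bigr)-\beta_{2}|\xi|^{2}+O(|\xi|^{4})$ for a computable constant $\beta_{3}$, and amplitudes $\mathcal{E}_{j}$ depending linearly on $(\widehat{u}^{k,p_0}_{0},\widehat{u}^{k,p_0}_{1},\widehat{\theta}_{0})$ through coefficients smooth in $|\xi|$ apart from the factors $|\xi|^{-1}$ (from the singular oscillating mode) and $\xi_{k}/|\xi|^{2}$ (from the coupling $\gamma_{1}\nabla\theta$, whence the Riesz transform $\ml{R}_{k}$). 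I would then expand the roots and amplitude coefficients in powers of $|\xi|$ and the data by $\widehat{f}(\xi)=P_{f}-i\xi\circ M_{f}+\mathcal{R}_{f}(\xi)$ with $|\mathcal{R}_{f}(\xi)|\leqslant|\xi|\,\omega_{f}(|\xi|)$, $\omega_{f}$ bounded by $\|f\|_{L^{1,1}}$ and $\omega_{f}(r)\to0$ as $r\to0^{+}$. The decisive point is that the relevant grading is by the $L^{2}$-size of the resulting pieces, not by the power of $|\xi|$: the pieces of size $\ml{A}_{n}(t)$ — carried only by the singular modes $\sin(\beta_{1}|\xi|t)/|\xi|$ and $|\xi|^{-1}(\cos(\beta_{1}|\xi|t)-\mathrm{e}^{-\beta_{0}|\xi|^{2}t})$ against the leading data $P_{u^{k,p_0}_{1}},P_{\theta_{0}}$ — reassemble $\widehat{\varphi^{k}}$, while those of the next size $t^{-n/4}$ reassemble $\widehat{\psi^{k}}$: the first moments $M_{u^{k,p_0}_{1}},M_{\theta_{0}}$ against the leading amplitudes ($\nabla\ml{G}_{0},\nabla\ml{G}_{1,k}$), the $O(1)$-bounded modes carried by $u^{k,p_0}_{0}$ and by the $|\xi|^{2}$-corrections in the $u^{k,p_0}_{1}$- and $\theta_{0}$-channels ($\ml{G}_{2},\ml{G}_{3},\ml{G}_{4,k}$), and the cubic phase correction entering $\sin(\omega t)/\omega$ with $\omega=\beta_{1}|\xi|+\beta_{3}|\xi|^{3}+\cdots$ ($\ml{H}_{0},\ml{H}_{1,k}$).

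It then remains to estimate the residual $\widehat{R}(t,\xi)=\chi_{\intt}(\xi)\bigl(\widehat{u}^{k,p_0}-\widehat{\varphi^{k}}-\widehat{\psi^{k}}\bigr)$, which splits into finitely many terms of two types. Terms of the first type carry a clean factor $|\xi|^{m}$, $m\geqslant2$, possibly preceded by $|\xi|^{-1}$ and a positive power of $t$, times $\mathrm{e}^{-c|\xi|^{2}t}$ and a bounded oscillation; the substitution $\xi=t^{-1/2}\eta$ bounds each such piece in $L^{2}$ by $t^{-n/4-1/2}\log(2+t)$, hence $o(t^{-n/4})$. Terms of the second type carry one of the data remainders $\mathcal{R}_{u^{k,p_0}_{0}},\mathcal{R}_{u^{k,p_0}_{1}},\mathcal{R}_{\theta_{0}}$ against a leading (possibly $|\xi|^{-1}$-singular) amplitude; a priori they sit at the $\psi^{k}$-scale $t^{-n/4}$, but the extra factor $\omega_{f}(|\xi|)$ together with dominated convergence after the same rescaling forces their $L^{2}$-norm to be $o(t^{-n/4})$, with implicit constant depending on $\|(u^{k,p_0}_{0},u^{k,p_0}_{1},\theta_{0})\|_{(L^{2}\cap L^{1})\times(L^{2}\cap L^{1,1})^{2}}$. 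Adding the exponentially small bounded/high-frequency contribution yields \eqref{Second-order-expansion}.

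The main obstacle is precisely this bookkeeping. The $|\xi|^{-1}$ singularity produced by $\gamma_{1}\nabla\theta$ can promote an apparently harmless factor $|\xi|^{m}$ to leading order, so one must order the expansion by $L^{2}$-size rather than by powers of $|\xi|$; and it is the interference of this singularity with the oscillations $\mathrm{e}^{\pm i\beta_{1}|\xi|t}$ and the Gaussian damping $\mathrm{e}^{-\beta_{2}|\xi|^{2}t}$ that produces the dimension-dependent rate $\ml{A}_{n}(t)$ and forces $n=1,2,3$ to be handled separately, $n=2$ being the logarithmic borderline. One must therefore verify, term by term, that no correction outside $\varphi^{k}+\psi^{k}$ survives at the $t^{-n/4}$ scale. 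The same analysis pins down the regularity hypotheses: $u^{k,p_0}_{0}$ never couples to the singular $\sin$-mode and enters only through $P_{u^{k,p_0}_{0}}$ inside $\ml{G}_{2}$, so $L^{1}$ suffices for it, whereas the first moments $M_{u^{k,p_0}_{1}},M_{\theta_{0}}$ appear explicitly in $\psi^{k}$, which is why $u^{k,p_0}_{1}$ and $\theta_{0}$ are required to lie in $L^{1,1}$.
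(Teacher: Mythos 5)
Your plan is correct and follows essentially the same route as the paper's proof: reduce to the low-frequency zone $\ml{Z}_{\intt}(\varepsilon_0)$ by exponential decay elsewhere, use the refined expansions of the characteristic roots and kernels (in particular the $|\xi|^3$ phase correction, which is exactly what generates $\ml{H}_0,\ml{H}_{1,k}$) to show that the solution minus the six multiplier kernels applied to the \emph{actual} data is $O(t^{-\frac{n}{4}-\frac{1}{2}})$, and then replace the data by their means and first moments at a cost of $o(t^{-\frac{n}{4}})$. The only substantive difference is the data-replacement step: you argue on the Fourier side via $\widehat{f}(\xi)=P_f-i\xi\circ M_f+\mathcal{R}_f(\xi)$ with a modulus of continuity and dominated convergence after the rescaling $\xi=t^{-1/2}\eta$ (and, as you correctly note, only the zeroth-order relation $\widehat{u}^{k,p_0}_0(\xi)\to P_{u_0^{k,p_0}}$ for $u_0^{k,p_0}\in L^1$, since your uniform $L^{1,1}$ remainder bound does not literally apply to it), whereas the paper splits the convolutions in physical space at $|y|=t^{\alpha_1}$ and uses Taylor's theorem on $\ml{G}_0,\ml{G}_{1,k}$ together with the vanishing tails of $L^{1,1}$ data — the two arguments are interchangeable, and your term-by-term bookkeeping claim is precisely what the paper's Propositions \ref{Prop-Pointwise}--\ref{Prop-Improvement-J1} verify.
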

\begin{remark}
Comparing with the optimal estimates \eqref{Estimates:refined-Upper-Bound}, by subtracting the additional function $\psi^k(t,\cdot)$ in the $L^2$ norm, we may obtain faster decay estimates with respect to large-time. In other words, $\psi^k(t,x)$ is the second-order profile of the elastic displacement $u^{k,p_0}(t,x)$.
\end{remark}

\section{Asymptotic behaviors of solutions in the Fourier space}
\subsection{Pretreatment by the reduction procedure}
To investigate some large-time behaviors for \eqref{up-coupled} finely, inspired by the recent paper \cite{Chen-Ikehata=2022-plate}, we may employ the so-called \emph{reduction procedure}. Namely, from our motivation of investigating the potential solution $u^{p_0}$, acting the diffusion operator $\partial_t-\kappa\Delta$ on \eqref{up-coupled}$_1$ and combining the resultant with  \eqref{up-coupled}$_2$, we deduce
\begin{align*}
0&=(\partial_t-\kappa\Delta)(u_{tt}^{p_0}-b^2\Delta u^{p_0})+\gamma_1\nabla(\theta_{t}-\kappa\Delta\theta)\\
&=u_{ttt}^{p_0}-\kappa\Delta u_{tt}^{p_0}-(b^2+\gamma_1\gamma_2)\Delta u^{p_0}_t+\kappa b^2\Delta^2 u^{p_0},
\end{align*}
where we employed $\nabla\divv u^{p_0}=\Delta u^{p_0}$ since $\nabla\times u^{p_0}=0$ for $n=2,3$, and it is trivial for $n=1$. In other words, the coupled system \eqref{up-coupled} can be reduced to the following third-order (in time) PDEs with $n$-scalar equations:
\begin{align}\label{Eq-Fourth-PDEs}
\begin{cases}
u_{ttt}^{p_0}-\kappa\Delta u_{tt}^{p_0}-(b^2+\gamma_1\gamma_2)\Delta u^{p_0}_t+\kappa b^2\Delta^2 u^{p_0}=0,\\
u^{p_0}(0,x)=u_0^{p_0}(x),\ u^{p_0}_t(0,x)=u_1^{p_0}(x),\ u^{p_0}_{tt}(0,x)=b^2\Delta u^{p_0}_0(x)-\gamma_1\nabla\theta_0(x),
\end{cases}
\end{align}
with $(t,x)\in\mb{R}_+\times\mb{R}^n$, where the vector (or scalar in one-dimension) is $u^{p_0}=(u^{1,p_0},\cdots,u^{n,p_0})\in\mb{R}^n$ with $n=1,2,3$. The model \eqref{Eq-Fourth-PDEs} is still a vector equation since the gradient operator in $u_{tt}^{p_0}(0,x)$.

Let us apply the partial Fourier transform with respect to spatial variables to the higher-order evolution model \eqref{Eq-Fourth-PDEs}. It yields
\begin{align}\label{Eq-Fourth-Fourier}
	\begin{cases}
\widehat{u}^{p_0}_{ttt}+\kappa|\xi|^2\widehat{u}^{p_0}_{tt}+(b^2+\gamma_1\gamma_2)|\xi|^2\widehat{u}^{p_0}_t+\kappa b^2|\xi|^4\widehat{u}^{p_0}=0,\\
	\widehat{u}^{p_0}(0,\xi)=\widehat{u}^{p_0}_0(\xi),\ \widehat{u}^{p_0}_t(0,\xi)=\widehat{u}^{p_0}_1(\xi),\ \widehat{u}^{p_0}_{tt}(0,\xi)=-b^2|\xi|^2\widehat{u}^{p_0}_0(\xi)-i\gamma_1\xi\widehat{\theta}_0(\xi),
	\end{cases}
\end{align}
with $(t,\xi)\in\mb{R}_+\times\mb{R}^n$, and
\begin{align*}
	\widehat{\theta}=-\frac{1}{i\gamma_1\xi_k}(\widehat{u}_{tt}^{k,p_0}+b^2|\xi|^2\widehat{u}^{k,p_0})\ \ \mbox{with}\ \ k=1,\dots,n,
\end{align*}
originated from the Fourier transform for \eqref{up-coupled}$_1$. The corresponding characteristic equation to \eqref{Eq-Fourth-Fourier} is given by the $|\xi|$-dependent cubic equation
\begin{align}\label{cubic-eq}
	\lambda^3+\kappa|\xi|^2\lambda^2+(b^2+\gamma_1\gamma_2)|\xi|^2\lambda+\kappa b^2|\xi|^4=0.
\end{align}
Later, without using explicit root's formula to the cubic equation, to facilitate the asymptotic analysis, we will separate the discussion into three parts according to the size of frequencies.
 
 \subsection{Asymptotic expansions for the kernels}
 At the beginning of this subsection, basing on WKB analysis we claim the next expansions, whose proof is straightforward. To be specific, higher-order Taylor-like expansions with respect to $|\xi|$ will be used as $\xi\in\ml{Z}_{\intt}(\varepsilon_0)\cup\ml{Z}_{\extt}(N_0)$ with $\varepsilon_0\ll 1$ as well as $N_0\gg1$, and a contradiction argument associated with continuity of characteristic roots (see, for example, \cite{Jachmann-Reissig=2009}) is valid for $\xi\in\ml{Z}_{\bdd}(\varepsilon_0,N_0)$ since $\Re\lambda_j<0$ when $\xi\in\ml{Z}_{\intt}(\varepsilon_0)\cup\ml{Z}_{\extt}(N_0)$.
 
 \begin{prop}\label{Prop-Expansions}  The characteristic roots $\lambda_j=\lambda_j(|\xi|)$ with $j=1,2,3$ to the $|\xi|$-dependent cubic equation \eqref{cubic-eq} can be expanded by the next way.
 \begin{itemize}
 	\item Concerning $\xi\in\ml{Z}_{\intt}(\varepsilon_0)$, three roots behave as
 	\begin{align*}
 	\lambda_{1}&=-\frac{\kappa b^2}{b^2+\gamma_1\gamma_2}|\xi|^2-\frac{\kappa^3b^4\gamma_1\gamma_2}{(b^2+\gamma_1\gamma_2)^4}|\xi|^4+\ml{O}(|\xi|^6),\\
 	\lambda_{2,3}&=\pm i\sqrt{b^2+\gamma_1\gamma_2}|\xi|-\frac{\kappa\gamma_1\gamma_2}{2(b^2+\gamma_1\gamma_2)}|\xi|^2\mp i\frac{\kappa^2 \gamma_1\gamma_2(\gamma_1\gamma_2+4b^2)}{8(b^2+\gamma_1\gamma_2)^{5/2}}|\xi|^3+\ml{O}(|\xi|^4).
 	\end{align*}
 \item Concerning $\xi\in\ml{Z}_{\extt}(N_0)$, three roots behave as
 \begin{align*}
 \lambda_1&=-\kappa|\xi|^2+\ml{O}(|\xi|),\\
 \lambda_{2,3}&=\pm ib|\xi|-\frac{\gamma_1\gamma_2}{2\kappa}+\ml{O}(|\xi|^{-1}).
 \end{align*}
\item Concerning $\xi\in\ml{Z}_{\bdd}(\varepsilon_0,N_0)$, the roots fulfill $\Re \lambda_j<0$ for any $j=1,2,3$.
 \end{itemize}
 \end{prop}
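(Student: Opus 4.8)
I would prove Proposition~\ref{Prop-Expansions} one frequency zone at a time, writing $r=|\xi|$. The structural facts driving everything are: for $r>0$ all coefficients of the cubic $\lambda^{3}+\kappa r^{2}\lambda^{2}+(b^{2}+\gamma_{1}\gamma_{2})r^{2}\lambda+\kappa b^{2}r^{4}$ are strictly positive (since $b>a>0$ and $\gamma_{1}\gamma_{2}>0$); the cubic is even in $r$, so its root set is $r\mapsto-r$ invariant; and as $r\to0^{+}$ it degenerates to $\lambda^{3}=0$, the two admissible balances producing one ``parabolic'' root $\lambda_{1}=\ml{O}(r^{2})$ and a conjugate pair of ``oscillating'' roots $\lambda_{2,3}=\ml{O}(r)$. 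Setting $\lambda=r\eta$ turns the cubic into $\eta^{3}+\kappa r\eta^{2}+(b^{2}+\gamma_{1}\gamma_{2})\eta+\kappa b^{2}r=0$, which at $r=0$ factors as $\eta\bigl(\eta^{2}+b^{2}+\gamma_{1}\gamma_{2}\bigr)=0$ with three simple roots $0,\pm i\sqrt{b^{2}+\gamma_{1}\gamma_{2}}$; hence by the implicit function theorem each root extends analytically in $r$ near $0$, and combined with the parity of the equation this forces $\lambda_{1}$ to be an even power series and $\lambda_{2,3}=\pm i\mu_{1}r+\mu_{2}r^{2}\pm i\mu_{3}r^{3}+\cdots$ with real $\mu_{j}$. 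The coefficients are then pinned down by plugging the ansatz back into the cubic and matching powers of $r$.

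\emph{Interior zone.} For $\lambda_{1}=a_{2}r^{2}+a_{4}r^{4}+\ml{O}(r^{6})$, the $r^{4}$-identity gives $(b^{2}+\gamma_{1}\gamma_{2})a_{2}+\kappa b^{2}=0$, i.e. $a_{2}=-\frac{\kappa b^{2}}{b^{2}+\gamma_{1}\gamma_{2}}=-\beta_{0}$, and the $r^{6}$-identity gives $a_{2}^{3}+\kappa a_{2}^{2}+(b^{2}+\gamma_{1}\gamma_{2})a_{4}=0$, so using $a_{2}+\kappa=\frac{\kappa\gamma_{1}\gamma_{2}}{b^{2}+\gamma_{1}\gamma_{2}}$ one gets $a_{4}=-\frac{\kappa^{3}b^{4}\gamma_{1}\gamma_{2}}{(b^{2}+\gamma_{1}\gamma_{2})^{4}}$. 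For $\lambda_{2,3}=\pm i\mu_{1}r+\mu_{2}r^{2}\pm i\mu_{3}r^{3}+\ml{O}(r^{4})$, matching successively gives: order $r^{3}$, $\mu_{1}^{2}=b^{2}+\gamma_{1}\gamma_{2}$; order $r^{4}$, $-2(b^{2}+\gamma_{1}\gamma_{2})\mu_{2}=\kappa\bigl(b^{2}-(b^{2}+\gamma_{1}\gamma_{2})\bigr)=-\kappa\gamma_{1}\gamma_{2}$, i.e. $\mu_{2}=-\beta_{2}$; order $r^{5}$, $-2(b^{2}+\gamma_{1}\gamma_{2})\mu_{3}+\mu_{1}\mu_{2}(3\mu_{2}+2\kappa)=0$, and since $3\mu_{2}+2\kappa=\frac{\kappa(\gamma_{1}\gamma_{2}+4b^{2})}{2(b^{2}+\gamma_{1}\gamma_{2})}$, this gives $\mu_{3}=-\frac{\kappa^{2}\gamma_{1}\gamma_{2}(\gamma_{1}\gamma_{2}+4b^{2})}{8(b^{2}+\gamma_{1}\gamma_{2})^{5/2}}$, exactly the asserted coefficient. (Equivalently, one may factor $\lambda^{3}+\cdots=(\lambda-\lambda_{1})(\lambda^{2}-(\lambda_{2}+\lambda_{3})\lambda+\lambda_{2}\lambda_{3})$, read $\lambda_{2}+\lambda_{3}=-\kappa r^{2}-\lambda_{1}$ and $\lambda_{2}\lambda_{3}=-\kappa b^{2}r^{4}/\lambda_{1}$ off Vieta, and apply the quadratic formula to the residual factor.)

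\emph{Exterior zone.} For the heat-type root, the rescaling $\lambda_{1}=r^{2}\mu$ reduces the cubic to $\mu^{3}+\kappa\mu^{2}+r^{-2}\bigl[(b^{2}+\gamma_{1}\gamma_{2})\mu+\kappa b^{2}\bigr]=0$, whose limit $\mu^{2}(\mu+\kappa)=0$ has $\mu=-\kappa$ as a simple root; expanding $\mu=-\kappa+\ml{O}(r^{-2})$ and matching gives $\lambda_{1}=-\kappa r^{2}+\frac{\gamma_{1}\gamma_{2}}{\kappa}+\ml{O}(r^{-2})$, which is $-\kappa|\xi|^{2}+\ml{O}(|\xi|)$ as claimed. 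For the wave-type pair, $\lambda_{2,3}=r\eta$ with the limit (as $r\to\infty$) $\kappa(\eta^{2}+b^{2})=0$, giving $\eta=\pm ib$ simple; expanding $\eta=\pm ib+e_{0}r^{-1}+\ml{O}(r^{-2})$ the order-$r^{-1}$ identity reads $-b^{2}+2\kappa e_{0}+(b^{2}+\gamma_{1}\gamma_{2})=0$, hence $e_{0}=-\frac{\gamma_{1}\gamma_{2}}{2\kappa}$ and $\lambda_{2,3}=\pm ib|\xi|-\frac{\gamma_{1}\gamma_{2}}{2\kappa}+\ml{O}(|\xi|^{-1})$.

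\emph{Bounded zone and main obstacle.} On $\ml{Z}_{\bdd}(\varepsilon_{0},N_{0})$ I would apply the Routh--Hurwitz criterion: for $\lambda^{3}+a_{1}\lambda^{2}+a_{2}\lambda+a_{3}$ with $a_{1}=\kappa r^{2}>0$, $a_{2}=(b^{2}+\gamma_{1}\gamma_{2})r^{2}>0$, $a_{3}=\kappa b^{2}r^{4}>0$, one has $\Re\lambda_{j}<0$ for all $j$ if and only if $a_{1}a_{2}-a_{3}>0$, and here $a_{1}a_{2}-a_{3}=\kappa\gamma_{1}\gamma_{2}r^{4}>0$ by $\gamma_{1}\gamma_{2}>0$ (this in fact holds for every $r>0$). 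Alternatively, $\Re\lambda_{j}<0$ on $\ml{Z}_{\intt}\cup\ml{Z}_{\extt}$ follows from the expansions above, and on the compact set $\ml{Z}_{\bdd}$ no root can sit on the imaginary axis --- $\lambda=0$ is excluded since $\kappa b^{2}r^{4}\ne0$, and $\lambda=i\omega$ with $\omega\ne0$ would force both $\omega^{2}=b^{2}r^{2}$ (real part) and $\omega^{2}=(b^{2}+\gamma_{1}\gamma_{2})r^{2}$ (imaginary part), contradicting $\gamma_{1}\gamma_{2}>0$ --- so continuity of the roots in $r$ yields the claim. I expect the only delicate point to be the interior-zone bookkeeping producing the $r^{3}$-coefficient $\mu_{3}$: one must carry the expansions of $\lambda^{2}$ and $\lambda^{3}$ one order past the naive truncation, correctly collect the contributions landing at order $r^{5}$, and simplify the resulting expression (notably $3\mu_{2}+2\kappa$); everything else is elementary coefficient matching together with the standard implicit-function and continuity arguments for the characteristic roots.
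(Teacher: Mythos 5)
Your proposal is correct and, for the interior and exterior zones, it carries out precisely the Taylor-like (WKB) expansion that the paper invokes and leaves as ``straightforward'': the rescalings $\lambda=r\eta$ and $\lambda=r^{2}\mu$ together with the implicit function theorem at the simple limit roots legitimately justify the integer-power ansatz, and all of your matched coefficients agree with the statement, including the delicate third-order coefficient $\mu_{3}=-\frac{\kappa^{2}\gamma_{1}\gamma_{2}(\gamma_{1}\gamma_{2}+4b^{2})}{8(b^{2}+\gamma_{1}\gamma_{2})^{5/2}}$ and the fourth-order coefficient $-\frac{\kappa^{3}b^{4}\gamma_{1}\gamma_{2}}{(b^{2}+\gamma_{1}\gamma_{2})^{4}}$ of $\lambda_{1}$. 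The only blemish is a sign slip in your displayed order-$r^{4}$ identity: after substituting $\mu_{1}^{2}=b^{2}+\gamma_{1}\gamma_{2}$ it should read $-2(b^{2}+\gamma_{1}\gamma_{2})\mu_{2}=\kappa\gamma_{1}\gamma_{2}$, which gives exactly the conclusion $\mu_{2}=-\beta_{2}$ that you state and that your order-$r^{5}$ step (correctly) uses, so nothing downstream is affected. For the bounded zone you deviate mildly from the paper: the paper's argument is continuity of the roots plus a contradiction (no root can sit on the imaginary axis), which is precisely your ``alternative'' argument, while your primary Routh--Hurwitz computation $a_{1}a_{2}-a_{3}=\kappa\gamma_{1}\gamma_{2}|\xi|^{4}>0$ is a cleaner and stronger observation, since it yields $\Re\lambda_{j}<0$ for every $\xi\neq0$ in one stroke without any matching at the zone boundaries; either route settles the third bullet.
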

\begin{remark}
Different from the usual asymptotic expansions for characteristic roots (no matter Taylor-like expansion method \cite{Ide-Haramoto-Kawashima=2008} or diagonalization procedure \cite{Jachmann-Reissig=2009} in the coupled systems), we not only derived pairwise distinct value with negative real parts when $\xi\in\ml{Z}_{\intt}(\varepsilon_0)$, but also found further expansions of characteristic roots containing $|\xi|^3$- and $|\xi|^4$-terms in Proposition \ref{Prop-Expansions}, which will be applied when we study higher-order profiles and optimal leading terms.
\end{remark}
Due to the fact that the discriminant of the cubic \eqref{cubic-eq} is strictly negative for $\xi\in\ml{Z}_{\intt}(\varepsilon_0)$, two complex (non-real) roots $\lambda_{2,3}$ are conjugate, namely, $\lambda_{2,3}=\lambda_{\mathrm{R}}\pm i\lambda_{\mathrm{I}}$ for small frequencies carrying the asymptotic expansions
\begin{align*}
\lambda_{\mathrm{R}}=-\frac{\kappa\gamma_1\gamma_2}{2(b^2+\gamma_1\gamma_2)}|\xi|^2+\ml{O}(|\xi|^4),\ \ \lambda_{\mathrm{I}}=\sqrt{b^2+\gamma_1\gamma_2}|\xi|-\frac{\kappa^2 \gamma_1\gamma_2(\gamma_1\gamma_2+4b^2)}{8(b^2+\gamma_1\gamma_2)^{5/2}}|\xi|^3+\ml{O}(|\xi|^4).
\end{align*}
From the above setting, the solution $\widehat{u}^{k,p_0}=\widehat{u}^{k,p_0}(t,\xi)$ for $\xi\in\ml{Z}_{\intt}(\varepsilon_0)$ owns the representation
\begin{align*}
	\widehat{u}^{k,p_0}&=\frac{-(\lambda_{\mathrm{I}}^2+\lambda_{\mathrm{R}}^2)\widehat{u}_0^{k,p_0}+2\lambda_{\mathrm{R}}\widehat{u}_1^{k,p_0}-\widehat{u}_2^{k,p_0}}{2\lambda_{\mathrm{R}}\lambda_1-\lambda_{\mathrm{I}}^2-\lambda_{\mathrm{R}}^2-\lambda_1^2}\mathrm{e}^{\lambda_1t}+\frac{(2\lambda_{\mathrm{R}}\lambda_1-\lambda_1^2)\widehat{u}_0^{k,p_0}-2\lambda_{\mathrm{R}}\widehat{u}_1^{k,p_0}+\widehat{u}_2^{k,p_0}}{2\lambda_{\mathrm{R}}\lambda_1-\lambda_{\mathrm{I}}^2-\lambda_{\mathrm{R}}^2-\lambda_1^2}\cos(\lambda_{\mathrm{I}}t)\mathrm{e}^{\lambda_{\mathrm{R}}t}\\
	&\quad+\frac{\lambda_1(\lambda_{\mathrm{R}}\lambda_1+\lambda_{\mathrm{I}}^2-\lambda_{\mathrm{R}}^2)\widehat{u}_0^{k,p_0}+(\lambda_{\mathrm{R}}^2-\lambda_{\mathrm{I}}^2-\lambda_1^2)\widehat{u}_1^{k,p_0}-(\lambda_{\mathrm{R}}-\lambda_1)\widehat{u}_2^{k,p_0}}{\lambda_{\mathrm{I}}(2\lambda_{\mathrm{R}}\lambda_1-\lambda_{\mathrm{I}}^2-\lambda_{\mathrm{R}}^2-\lambda_1^2)}\sin(\lambda_{\mathrm{I}}t)\mathrm{e}^{\lambda_{\mathrm{R}}t},
\end{align*}
where the last data is fixed by $\widehat{u}_2^{k,p_0}:=-b^2|\xi|^2\widehat{u}^{k,p_0}_0-i\gamma_1\xi_k\widehat{\theta}_0$ with $k=1,\dots,n$. Let us reformulate the representation with the aid of the last data so that
\begin{align}\label{Rep-01}
	\widehat{u}^{k,p_0}&=\frac{(b^2|\xi|^2-\lambda_{\mathrm{I}}^2-\lambda_{\mathrm{R}}^2)\widehat{u}_0^{k,p_0}+2\lambda_{\mathrm{R}}\widehat{u}_1^{k,p_0}+i\gamma_1\xi_k\widehat{\theta}_0}{2\lambda_{\mathrm{R}}\lambda_1-\lambda_{\mathrm{I}}^2-\lambda_{\mathrm{R}}^2-\lambda_1^2}\mathrm{e}^{\lambda_1t}\notag\\
	&\quad+\frac{(2\lambda_{\mathrm{R}}\lambda_1-\lambda_1^2-b^2|\xi|^2)\widehat{u}_0^{k,p_0}-2\lambda_{\mathrm{R}}\widehat{u}_1^{k,p_0}-i\gamma_1\xi_k\widehat{\theta}_0}{2\lambda_{\mathrm{R}}\lambda_1-\lambda_{\mathrm{I}}^2-\lambda_{\mathrm{R}}^2-\lambda_1^2}\cos(\lambda_{\mathrm{I}}t)\mathrm{e}^{\lambda_{\mathrm{R}}t}\notag\\
	&\quad+\frac{[\lambda_1(\lambda_{\mathrm{R}}\lambda_1+\lambda_{\mathrm{I}}^2-\lambda_{\mathrm{R}}^2)+b^2|\xi|^2(\lambda_{\mathrm{R}}-\lambda_{1})]\widehat{u}_0^{k,p_0}}{\lambda_{\mathrm{I}}(2\lambda_{\mathrm{R}}\lambda_1-\lambda_{\mathrm{I}}^2-\lambda_{\mathrm{R}}^2-\lambda_1^2)}\sin(\lambda_{\mathrm{I}}t)\mathrm{e}^{\lambda_{\mathrm{R}}t}\notag\\
	&\quad+\frac{(\lambda_{\mathrm{R}}^2-\lambda_{\mathrm{I}}^2-\lambda_1^2)\widehat{u}_1^{k,p_0}+i\gamma_1\xi_k(\lambda_{\mathrm{R}}-\lambda_1)\widehat{\theta}_0}{\lambda_{\mathrm{I}}(2\lambda_{\mathrm{R}}\lambda_1-\lambda_{\mathrm{I}}^2-\lambda_{\mathrm{R}}^2-\lambda_1^2)}\sin(\lambda_{\mathrm{I}}t)\mathrm{e}^{\lambda_{\mathrm{R}}t}.
\end{align}
We should underline that the formula \eqref{Rep-01} still holds for $|\xi|\geqslant N_0\gg1$, however, these components will be modified by
\begin{align*}
\lambda_{\mathrm{R}}=-\frac{\gamma_1\gamma_2}{2\kappa}+\ml{O}(|\xi|^{-1}),\ \ \lambda_{\mathrm{I}}=b|\xi|+\ml{O}(|\xi|^{-1}),
\end{align*}
since the strictly negative discriminant for $\xi\in\ml{Z}_{\extt}(N_0)$.
\subsection{Pointwise estimates and auxiliary functions in the Fourier space}
The solution formula \eqref{Rep-01} still seems too complex to analyze its asymptotic behaviors. For this reason, we have to build several bridges by auxiliary functions.
Indeed, as $\xi\in\ml{Z}_{\intt}(\varepsilon_0)$, we extract the dominant terms $\widehat{J}_0^k=\widehat{J}_0^k(t,\xi)$ such that
\begin{align*}
\widehat{J}_0^k=-\frac{\lambda_{\mathrm{I}}\sin(\lambda_{\mathrm{I}}t)\widehat{u}_1^{k,p_0}}{2\lambda_{\mathrm{R}}\lambda_1-\lambda_{\mathrm{I}}^2-\lambda_{\mathrm{R}}^2-\lambda_1^2}\mathrm{e}^{\lambda_{\mathrm{R}}t}+\frac{i\gamma_1\xi_k\widehat{\theta}_0}{2\lambda_{\mathrm{R}}\lambda_1-\lambda_{\mathrm{I}}^2-\lambda_{\mathrm{R}}^2-\lambda_1^2}\left( \mathrm{e}^{\lambda_1t}-\cos(\lambda_{\mathrm{I}}t)\mathrm{e}^{\lambda_{\mathrm{R}}t}\right),
\end{align*}
as well as the further one $\widehat{J}_1^k=\widehat{J}_1^k(t,\xi)$ according to
\begin{align*}
\widehat{J}_1^k&:=\frac{(b^2|\xi|^2-\lambda_{\mathrm{I}}^2)\mathrm{e}^{\lambda_1t}-b^2|\xi|^2\cos(\lambda_{\mathrm{I}}t)\mathrm{e}^{\lambda_{\mathrm{R}}t}}{2\lambda_{\mathrm{R}}\lambda_1-\lambda_{\mathrm{I}}^2-\lambda_{\mathrm{R}}^2-\lambda_1^2}\widehat{u}_0^{k,p_0}+\frac{2\lambda_{\mathrm{R}}\widehat{u}_1^{k,p_0}}{2\lambda_{\mathrm{R}}\lambda_1-\lambda_{\mathrm{I}}^2-\lambda_{\mathrm{R}}^2-\lambda_1^2}\left(\mathrm{e}^{\lambda_1t}-\cos(\lambda_{\mathrm{I}}t)\mathrm{e}^{\lambda_{\mathrm{R}}t}\right)\\
&\ \ \quad+\frac{i\gamma_1\xi_k(\lambda_{\mathrm{R}}-\lambda_1)\widehat{\theta}_0}{\lambda_{\mathrm{I}}(2\lambda_{\mathrm{R}}\lambda_1-\lambda_{\mathrm{I}}^2-\lambda_{\mathrm{R}}^2-\lambda_1^2)}\sin(\lambda_{\mathrm{I}}t)\mathrm{e}^{\lambda_{\mathrm{R}}t}.
\end{align*}

Let us next propose some estimates for the above functions and the refined estimates of solutions by subtracting these functions.
\begin{prop}\label{Prop-Pointwise}
Concerning $\xi\in\ml{Z}_{\intt}(\varepsilon_0)$, the following estimates for the auxiliary functions as well as the error terms hold:
\begin{align}
\chi_{\intt}(\xi)|\widehat{J}_0^k|&\lesssim\chi_{\intt}(\xi)\mathrm{e}^{-c|\xi|^2t}\left(\sqrt{t}+\frac{|\sin(|\xi|t)|}{|\xi|}\right)\left(|\widehat{u}_1^{k,p_0}|+|\widehat{\theta}_0|\right),\label{Pointwise-proof-1}\\
\chi_{\intt}(\xi)|\widehat{u}^{k,p_0}|&\lesssim \chi_{\intt}(\xi)
\mathrm{e}^{-c|\xi|^2t}\left[|\widehat{u}_0^{k,p_0}|+ \left(1+\sqrt{t}+\frac{|\sin(|\xi|t)|}{|\xi|}\right)\left(|\widehat{u}_1^{k,p_0}|+|\widehat{\theta}_0|\right)  \right],\notag\\
\chi_{\intt}(\xi)\left(|\widehat{J}_1^k|+|\widehat{u}^{k,p_0}-\widehat{J}_0^k|\right)&\lesssim\chi_{\intt}(\xi)\mathrm{e}^{-c|\xi|^2t}\left(|\widehat{u}_0^{k,p_0}|+|\widehat{u}_1^{k,p_0}|+|\widehat{\theta}_0|\right),\label{Pointwise-proof-2}\\
\chi_{\intt}(\xi)|\widehat{u}^{k,p_0}-\widehat{J}_0^k-\widehat{J}_1^k|&\lesssim\chi_{\intt}(\xi)|\xi|\mathrm{e}^{-c|\xi|^2t}\left(|\widehat{u}_0^{k,p_0}|+|\widehat{u}_1^{k,p_0}|\right),\notag
\end{align}
for any $k=1,\dots,n$.
\end{prop}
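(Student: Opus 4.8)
The plan is to read off all four estimates directly from the explicit representation \eqref{Rep-01} together with the small-frequency expansions of Proposition \ref{Prop-Expansions}, the whole analysis resting on two quantitative facts which I would establish first, shrinking $\varepsilon_0$ if necessary. One is the non-degeneracy of the denominator $D(|\xi|):=2\lambda_{\mathrm{R}}\lambda_1-\lambda_{\mathrm{I}}^2-\lambda_{\mathrm{R}}^2-\lambda_1^2$, namely $|D(|\xi|)|\simeq|\xi|^2$ on $\ml{Z}_{\intt}(\varepsilon_0)$, which holds because $\lambda_{\mathrm{I}}^2=(b^2+\gamma_1\gamma_2)|\xi|^2+\ml{O}(|\xi|^4)$ dominates the three remaining $\ml{O}(|\xi|^4)$ terms; the other is the uniform Gaussian control $|\mathrm{e}^{\lambda_1 t}|+|\mathrm{e}^{\lambda_{\mathrm{R}}t}|\lesssim\mathrm{e}^{-c|\xi|^2t}$ on $\ml{Z}_{\intt}(\varepsilon_0)$, coming from $\lambda_1,\lambda_{\mathrm{R}}=\ml{O}(|\xi|^2)$ with strictly negative leading coefficients. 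Alongside these I would record the elementary tools used repeatedly: $|\mathrm{e}^{\lambda_1 t}-\mathrm{e}^{\lambda_{\mathrm{R}}t}|\lesssim|\lambda_1-\lambda_{\mathrm{R}}|\,t\,\mathrm{e}^{-c|\xi|^2t}$ (since $\lambda_1-\lambda_{\mathrm{R}}=\ml{O}(|\xi|^2)$), the identity $|1-\cos\theta|=2\sin^2(\theta/2)\le2|\sin(\theta/2)|$, the harmlessness of replacing $\lambda_{\mathrm{I}}$ by $\sqrt{b^2+\gamma_1\gamma_2}\,|\xi|$ inside trigonometric functions (the error is $\ml{O}(|\xi|^3t)$, absorbed by the Gaussian), and the smoothing bound $|\xi|^jt^{j/2}\mathrm{e}^{-c|\xi|^2t}\lesssim1$, in particular $|\xi|^2t\,\mathrm{e}^{-c|\xi|^2t}\lesssim1$ and $|\xi|\,t\,\mathrm{e}^{-c|\xi|^2t}\lesssim\sqrt t\,\mathrm{e}^{-c'|\xi|^2t}$.

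Granting these, \eqref{Pointwise-proof-1} is immediate from the definition of $\widehat{J}_0^k$: in the first summand $|\lambda_{\mathrm{I}}/D|\simeq|\xi|^{-1}$ produces the factor $|\xi|^{-1}|\sin(\lambda_{\mathrm{I}}t)|$, while in the second $|\xi_k/D|\simeq|\xi|^{-1}$ multiplies $\mathrm{e}^{\lambda_1 t}-\cos(\lambda_{\mathrm{I}}t)\mathrm{e}^{\lambda_{\mathrm{R}}t}$, which I split as $(\mathrm{e}^{\lambda_1 t}-\mathrm{e}^{\lambda_{\mathrm{R}}t})+(1-\cos(\lambda_{\mathrm{I}}t))\mathrm{e}^{\lambda_{\mathrm{R}}t}$: the first part is $\ml{O}(|\xi|^2t\,\mathrm{e}^{-c|\xi|^2t})$ and, divided by $|\xi|$, accounts for the $\sqrt t$ term, the second is $\lesssim|\sin(\lambda_{\mathrm{I}}t/2)|\,\mathrm{e}^{-c|\xi|^2t}$ and accounts for the $|\xi|^{-1}|\sin(|\xi|t)|$ term. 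For the bounds on $\widehat{u}^{k,p_0}$ and on $\widehat{u}^{k,p_0}-\widehat{J}_0^k$ I would group \eqref{Rep-01} by $\widehat{u}_0^{k,p_0}$, $\widehat{u}_1^{k,p_0}$, $\widehat{\theta}_0$ and read off the order of each coefficient: every coefficient of $\widehat{u}_0^{k,p_0}$ is $\ml{O}(1)$ (the $|\xi|^2$ in $b^2|\xi|^2-\lambda_{\mathrm{I}}^2-\lambda_{\mathrm{R}}^2$, and the $\ml{O}(|\xi|^4)$ numerator of the $\sin(\lambda_{\mathrm{I}}t)$-term, cancel the $|\xi|^2$ in $D$, resp. the $|\xi|^3$ in $\lambda_{\mathrm{I}}D$); the coefficients of $\widehat{u}_1^{k,p_0}$ split into $\ml{O}(1)$ pieces and the single $|\xi|^{-1}$-singular piece $-\lambda_{\mathrm{I}}/D$ attached to $\sin(\lambda_{\mathrm{I}}t)\mathrm{e}^{\lambda_{\mathrm{R}}t}$; and those of $\widehat{\theta}_0$ split into the $|\xi|^{-1}$-singular piece $i\gamma_1\xi_k/D$ attached to $\mathrm{e}^{\lambda_1 t}$ and to $\cos(\lambda_{\mathrm{I}}t)\mathrm{e}^{\lambda_{\mathrm{R}}t}$, plus an $\ml{O}(1)$ piece (because $(\lambda_{\mathrm{R}}-\lambda_1)/\lambda_{\mathrm{I}}=\ml{O}(|\xi|)$ removes a power). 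Since $\widehat{J}_0^k$ is exactly the sum of the two $|\xi|^{-1}$-singular pieces, $\widehat{u}^{k,p_0}-\widehat{J}_0^k$ has only $\ml{O}(1)$ coefficients and \eqref{Pointwise-proof-2} follows, the bound on $\widehat{J}_1^k$ being obtained by the same inspection, and the estimate for $\widehat{u}^{k,p_0}$ itself is then the triangle inequality $|\widehat{u}^{k,p_0}|\le|\widehat{J}_0^k|+|\widehat{u}^{k,p_0}-\widehat{J}_0^k|$.

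For the final estimate I would subtract $\widehat{J}_0^k+\widehat{J}_1^k$ from \eqref{Rep-01} line by line. By design the $\widehat{\theta}_0$-terms cancel in full — $\widehat{J}_0^k$ matches the $\mathrm{e}^{\lambda_1 t}$ and $\cos(\lambda_{\mathrm{I}}t)\mathrm{e}^{\lambda_{\mathrm{R}}t}$ lines, and $\widehat{J}_1^k$ matches the leftover $\sin(\lambda_{\mathrm{I}}t)\mathrm{e}^{\lambda_{\mathrm{R}}t}$ line — which is why $\widehat{\theta}_0$ is absent from the right-hand side. The surviving $\widehat{u}_0^{k,p_0}$-terms are $-(\lambda_{\mathrm{R}}^2/D)\mathrm{e}^{\lambda_1 t}$, $((2\lambda_{\mathrm{R}}\lambda_1-\lambda_1^2)/D)\cos(\lambda_{\mathrm{I}}t)\mathrm{e}^{\lambda_{\mathrm{R}}t}$ and a $\sin(\lambda_{\mathrm{I}}t)\mathrm{e}^{\lambda_{\mathrm{R}}t}$-term with numerator $\ml{O}(|\xi|^4)$ over $\lambda_{\mathrm{I}}D=\ml{O}(|\xi|^3)$, while the surviving $\widehat{u}_1^{k,p_0}$-term is $((\lambda_{\mathrm{R}}^2-\lambda_1^2)/(\lambda_{\mathrm{I}}D))\sin(\lambda_{\mathrm{I}}t)\mathrm{e}^{\lambda_{\mathrm{R}}t}$, again of the form $\ml{O}(|\xi|^4)/\ml{O}(|\xi|^3)$; hence each is $\ml{O}(|\xi|\mathrm{e}^{-c|\xi|^2t})$, which gives the claimed extra factor $|\xi|$ and is sharp since $\lambda_{\mathrm{R}}-\lambda_1\simeq|\xi|^2$ generically. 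The main obstacle is not any single estimate but keeping this whole cancellation scheme coherent: the $|\xi|^{-1}$ singularity of $\lambda_{\mathrm{I}}/D$ and of $\xi_k/D$ is genuine and cannot be removed, and the construction works only because $\mathrm{e}^{\lambda_1 t}$ and $\cos(\lambda_{\mathrm{I}}t)\mathrm{e}^{\lambda_{\mathrm{R}}t}$ both agree with the heat kernel $\mathrm{e}^{-c|\xi|^2t}$ to leading order, so that exactly the combinations placed in $\widehat{J}_0^k$ and $\widehat{J}_1^k$ regularize it; verifying that the $\widehat{\theta}_0$-contribution vanishes after the second subtraction, and that every leftover genuinely gains one full power of $|\xi|$, is the delicate and error-prone bookkeeping the proof has to get right.
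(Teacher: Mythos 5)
Your proposal is correct and follows essentially the same route as the paper's proof: the same splitting $\mathrm{e}^{\lambda_1t}-\cos(\lambda_{\mathrm{I}}t)\mathrm{e}^{\lambda_{\mathrm{R}}t}=(\mathrm{e}^{\lambda_1t}-\mathrm{e}^{\lambda_{\mathrm{R}}t})+(1-\cos(\lambda_{\mathrm{I}}t))\mathrm{e}^{\lambda_{\mathrm{R}}t}$ for \eqref{Pointwise-proof-1}, the same order-counting of numerators against the denominator $2\lambda_{\mathrm{R}}\lambda_1-\lambda_{\mathrm{I}}^2-\lambda_{\mathrm{R}}^2-\lambda_1^2\simeq|\xi|^2$ for \eqref{Pointwise-proof-2} and the last estimate (with the identical identification of the surviving terms and of the full cancellation of the $\widehat{\theta}_0$-contribution), and the same triangle-inequality step for $\widehat{u}^{k,p_0}$ and $\widehat{u}^{k,p_0}-\widehat{J}_0^k$. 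The only difference is cosmetic: you spell out the non-degeneracy of the denominator, the Gaussian control of $\mathrm{e}^{\lambda_1t},\mathrm{e}^{\lambda_{\mathrm{R}}t}$, and the harmless replacement of $\lambda_{\mathrm{I}}$ by $\sqrt{b^2+\gamma_1\gamma_2}\,|\xi|$ inside the trigonometric factors, which the paper uses implicitly.
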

\begin{proof}
By using the next trick:
\begin{align}\label{trick-01}
\mathrm{e}^{\lambda_1t}-\cos(\lambda_{\mathrm{I}}t)\mathrm{e}^{\lambda_{\mathrm{R}}t}&=\left(\mathrm{e}^{\lambda_1t}-\mathrm{e}^{\lambda_{\mathrm{R}}t}\right)+\big(1-\cos(\lambda_{\mathrm{I}}t)\big)\mathrm{e}^{\lambda_{\mathrm{R}}t}\notag\\
&=(\lambda_1-\lambda_{\mathrm{R}})t\mathrm{e}^{\lambda_{\mathrm{R}}t}\int_0^1\mathrm{e}^{(\lambda_1-\lambda_{\mathrm{R}})ts}\mathrm{d}s+2\sin^2\left(\tfrac{1}{2}\lambda_{\mathrm{I}}t\right)\mathrm{e}^{\lambda_{\mathrm{R}}t},
\end{align}
and Proposition \ref{Prop-Expansions} for $\xi\in\ml{Z}_{\intt}(\varepsilon_0)$, we claim
\begin{align*}
\chi_{\intt}(\xi)|\widehat{J}_0^k|\lesssim\chi_{\intt}(\xi)\frac{|\sin(|\xi|t)|}{|\xi|}\mathrm{e}^{-c|\xi|^2t}|\widehat{u}_1^{k,p_0}|+\chi_{\intt}(\xi)\left(|\xi|t+\frac{|\sin(|\xi|t)|^2}{|\xi|}\right)\mathrm{e}^{-c|\xi|^2t}|\widehat{\theta}_0|.
\end{align*}
Because of $|\xi|\sqrt{t}\mathrm{e}^{-c_0|\xi|^2t}\lesssim1$ under $c_0\in(0,c)$ and boundedness of $|\sin(|\xi|t)|$, the desired estimate \eqref{Pointwise-proof-1} can be obtained. Similarly, we may get
\begin{align*}
	\chi_{\intt}(\xi)|\widehat{J}_1^k|\lesssim \chi_{\intt}(\xi)\left(1+|\cos(|\xi|t)|\right)\mathrm{e}^{-c|\xi|^2t}\left(|\widehat{u}_0^{k,p_0}|+|\widehat{u}_1^{k,p_0}|\right)+\chi_{\intt}(\xi)|\sin(|\xi|t)|\mathrm{e}^{-c|\xi|^2t}|\widehat{\theta}_0|,
\end{align*}
which leads to the first part of our estimate \eqref{Pointwise-proof-2}. Additionally, a direct subtraction implies
\begin{align*}
&\chi_{\intt}(\xi)|\widehat{u}^{k,p_0}-\widehat{J}_0^k-\widehat{J}_1^k|\\
&\qquad\leqslant \chi_{\intt}(\xi)\left|\frac{-\lambda_{\mathrm{R}}^2\mathrm{e}^{\lambda_1t}+(2\lambda_{\mathrm{R}}\lambda_1-\lambda_1^2)\cos(\lambda_{\mathrm{I}}t)\mathrm{e}^{\lambda_{\mathrm{R}}t}}{2\lambda_{\mathrm{R}}\lambda_1-\lambda_{\mathrm{I}}^2-\lambda_{\mathrm{R}}^2-\lambda_1^2}\widehat{u}_0^{k,p_0}\right|\notag\\
&\qquad\quad+\chi_{\intt}(\xi)\left|\frac{[\lambda_1(\lambda_{\mathrm{R}}\lambda_1+\lambda_{\mathrm{I}}^2-\lambda_{\mathrm{R}}^2)+b^2|\xi|^2(\lambda_{\mathrm{R}}-\lambda_{1})]\widehat{u}_0^{k,p_0}+(\lambda_{\mathrm{R}}^2-\lambda_1^2)\widehat{u}_1^{k,p_0}}{\lambda_{\mathrm{I}}(2\lambda_{\mathrm{R}}\lambda_1-\lambda_{\mathrm{I}}^2-\lambda_{\mathrm{R}}^2-\lambda_1^2)}\sin(\lambda_{\mathrm{I}}t)\mathrm{e}^{\lambda_{\mathrm{R}}t}\right|\\
&\qquad\lesssim\chi_{\intt}(\xi)\left(|\xi|^2+|\xi|^2|\cos(|\xi|t)|+|\xi|\,|\sin(|\xi|t)|\right)\mathrm{e}^{-c|\xi|^2t}|\widehat{u}^{k,p_0}_0|+\chi_{\intt}(\xi)|\xi|\,|\sin(|\xi|t)|\mathrm{e}^{-c|\xi|^2t}|\widehat{u}^{k,p_0}_1|\\
&\qquad \lesssim\chi_{\intt}(\xi)|\xi|\mathrm{e}^{-c|\xi|^2t}\left(|\widehat{u}_0^{k,p_0}|+|\widehat{u}_1^{k,p_0}|\right).
\end{align*}
To end the proof, we apply the triangle inequality resulting
\begin{align*}
	\chi_{\intt}(\xi)|\widehat{u}^{k,p_0}-\widehat{J}_0^k|&\lesssim \chi_{\intt}(\xi)|\widehat{u}^{k,p_0}-\widehat{J}_0^k-\widehat{J}_1^k|+\chi_{\intt}(\xi)|\widehat{J}_1^k|\\
	&\lesssim \chi_{\intt}(\xi)\mathrm{e}^{-c|\xi|^2t}\left(|\widehat{u}_0^{k,p_0}|+|\widehat{u}_1^{k,p_0}|+|\widehat{\theta}_0|\right)
\end{align*}
as well as
\begin{align*}
\chi_{\intt}(\xi)|\widehat{u}^{k,p_0}|&\lesssim \chi_{\intt}(\xi)|\widehat{u}^{k,p_0}-\widehat{J}_0^k|+\chi_{\intt}(\xi)|\widehat{J}_0^k|\\
&\lesssim \chi_{\intt}(\xi)\mathrm{e}^{-c|\xi|^2t}|\widehat{u}_0^{k,p_0}|+\chi_{\intt}(\xi)\left(1+\sqrt{t}+\frac{|\sin(|\xi|t)|}{|\xi|}\right)\mathrm{e}^{-c|\xi|^2t}\left(|\widehat{u}_1^{k,p_0}|+|\widehat{\theta}_0|\right).
\end{align*}
Hence, we complete the proof of this proposition.
\end{proof}

By ignoring the higher-order terms in the auxiliary function $\widehat{J}_0^k$, we may introduce two Fourier multipliers $\widehat{\ml{G}}_0=\widehat{\ml{G}}_0(t,|\xi|)$ and $\widehat{\ml{G}}_{1,k}=\widehat{\ml{G}}_{1,k}(t,\xi)$ as follows:
\begin{align*}
\widehat{\ml{G}}_0&:=\frac{\sin(\sqrt{b^2+\gamma_1\gamma_2}|\xi|t)}{\sqrt{b^2+\gamma_1\gamma_2}|\xi|}\mathrm{e}^{-\frac{\kappa\gamma_1\gamma_2}{2(b^2+\gamma_1\gamma_2)}|\xi|^2t},\\
\widehat{\ml{G}}_{1,k}&:=\frac{i\gamma_1\xi_k}{(b^2+\gamma_1\gamma_2)|\xi|^2}\left(\cos\left(\sqrt{b^2+\gamma_1\gamma_2}|\xi|t\right)\mathrm{e}^{-\frac{\kappa\gamma_1\gamma_2}{2(b^2+\gamma_1\gamma_2)}|\xi|^2t}-\mathrm{e}^{-\frac{\kappa b^2}{b^2+\gamma_1\gamma_2}|\xi|^2t}\right),
\end{align*}
which are the Fourier transforms of \eqref{Pg1} and \eqref{Pg2}, respectively. Therefore, we can derive some approximations in the sense of additional factors $|\xi|^{s}$ by subtracting some Fourier multipliers in the estimate comparing with the one in \eqref{Pointwise-proof-1}.
\begin{prop}\label{Prop-Improvement-J0}
	Concerning $\xi\in\ml{Z}_{\intt}(\varepsilon_0)$, the following estimates for some approximations hold:
	\begin{align}
	\chi_{\intt}(\xi)\left|\widehat{J}_0^k-\widehat{\ml{G}}_0\widehat{u}_1^{k,p_0}-\widehat{\ml{G}}_{1,k}\widehat{\theta}_0\right|&\lesssim\chi_{\intt}(\xi)\mathrm{e}^{-c|\xi|^2t}\left(|\widehat{u}_1^{k,p_0}|+|\widehat{\theta}_0|\right),\label{Pointwise-proof-3}\\
	\chi_{\intt}(\xi)\left|\widehat{J}_0^k-(\widehat{\ml{G}}_0+\widehat{\ml{H}}_0)\widehat{u}_1^{k,p_0}-(\widehat{\ml{G}}_{1,k}+\widehat{\ml{H}}_{1,k})\widehat{\theta}_0\right|&\lesssim\chi_{\intt}(\xi)|\xi|\mathrm{e}^{-c|\xi|^2t}\left(|\widehat{u}_1^{k,p_0}|+|\widehat{\theta}_0|\right),\notag
	\end{align}
for any $k=1,\dots,n$, where $\widehat{\ml{H}}_0=\widehat{\ml{H}}_0(t,|\xi|)$ and $\widehat{\ml{H}}_{1,k}=\widehat{\ml{H}}_{1,k}(t,\xi)$ are defined by
\begin{align*}
\widehat{\ml{H}}_0&:=-\frac{\kappa^2\gamma_1\gamma_2(\gamma_1\gamma_2+4b^2)}{8(b^2+\gamma_1\gamma_2)^3}|\xi|^2t\cos\left(\sqrt{b^2+\gamma_1\gamma_2}|\xi|t\right)\mathrm{e}^{-\frac{\kappa\gamma_1\gamma_2}{2(b^2+\gamma_1\gamma_2)}|\xi|^2t},\\
\widehat{\ml{H}}_{1,k}&:=i\xi_k\frac{\kappa^2\gamma_1^2\gamma_2(\gamma_1\gamma_2+4b^2)}{8(b^2+\gamma_1\gamma_2)^{7/2}}|\xi|t\sin\left(\sqrt{b^2+\gamma_1\gamma_2}|\xi|t\right)\mathrm{e}^{-\frac{\kappa\gamma_1\gamma_2}{2(b^2+\gamma_1\gamma_2)}|\xi|^2t}.
\end{align*}
\end{prop}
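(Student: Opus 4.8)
The plan is to substitute the small-frequency expansions of $\lambda_1,\lambda_{\mathrm{R}},\lambda_{\mathrm{I}}$ from Proposition \ref{Prop-Expansions} into the defining formula for $\widehat{J}_0^k$ and to match it term by term against $\widehat{\ml{G}}_0\widehat{u}_1^{k,p_0}+\widehat{\ml{G}}_{1,k}\widehat{\theta}_0$ --- respectively against its $\widehat{\ml{H}}$-corrected refinement --- while tracking the order in $|\xi|$ and in $t$ of every remainder. First I would split $\widehat{J}_0^k=\widehat{J}_0^{k,u_1}+\widehat{J}_0^{k,\theta}$ according to the two pieces of data, and record the identity
\[
2\lambda_{\mathrm{R}}\lambda_1-\lambda_{\mathrm{I}}^2-\lambda_{\mathrm{R}}^2-\lambda_1^2=-\bigl(\lambda_{\mathrm{I}}^2+(\lambda_1-\lambda_{\mathrm{R}})^2\bigr)
\]
for the common denominator, which by Proposition \ref{Prop-Expansions} equals $-(b^2+\gamma_1\gamma_2)|\xi|^2\bigl(1+\ml{O}(|\xi|^2)\bigr)$ on $\ml{Z}_{\intt}(\varepsilon_0)$; hence its reciprocal is $-\tfrac{1}{(b^2+\gamma_1\gamma_2)|\xi|^2}+\ml{O}(1)$, with an explicit next coefficient available when a gain of one power of $|\xi|$ is needed.

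Next I would expand the oscillating-diffusive factors about their limiting profiles. Writing $\lambda_{\mathrm{I}}t=\sqrt{b^2+\gamma_1\gamma_2}\,|\xi|t-\delta$ with $\delta:=\tfrac{\kappa^2\gamma_1\gamma_2(\gamma_1\gamma_2+4b^2)}{8(b^2+\gamma_1\gamma_2)^{5/2}}|\xi|^3t+\ml{O}(|\xi|^4t)$, the angle-addition formulas and the (global) second-order Taylor remainder for $\sin,\cos$ give $\sin(\lambda_{\mathrm{I}}t)=\sin(\beta_1|\xi|t)-\delta\cos(\beta_1|\xi|t)+\ml{O}(\delta^2)$ and $\cos(\lambda_{\mathrm{I}}t)=\cos(\beta_1|\xi|t)+\delta\sin(\beta_1|\xi|t)+\ml{O}(\delta^2)$, with $\beta_1=\sqrt{b^2+\gamma_1\gamma_2}$; likewise $\mathrm{e}^{\lambda_{\mathrm{R}}t}=\mathrm{e}^{-\beta_2|\xi|^2t}\bigl(1+\ml{O}(|\xi|^4t)\bigr)$ and $\mathrm{e}^{\lambda_1t}=\mathrm{e}^{-\beta_0|\xi|^2t}\bigl(1+\ml{O}(|\xi|^4t)\bigr)$, where on $\ml{Z}_{\intt}(\varepsilon_0)$ the stray factors $\mathrm{e}^{C|\xi|^4t}\leqslant\mathrm{e}^{C\varepsilon_0^2|\xi|^2t}$ are absorbed into the Gaussian by slightly shrinking the exponent. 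Plugging these into $\widehat{J}_0^{k,u_1}$ reproduces $\widehat{\ml{G}}_0\widehat{u}_1^{k,p_0}$ at leading order together with the term $-\tfrac{\delta}{\beta_1|\xi|}\cos(\beta_1|\xi|t)\mathrm{e}^{-\beta_2|\xi|^2t}\widehat{u}_1^{k,p_0}$, which is precisely $\widehat{\ml{H}}_0\widehat{u}_1^{k,p_0}$; plugging them into $\widehat{J}_0^{k,\theta}$, after using the trick \eqref{trick-01} to see that $\mathrm{e}^{\lambda_1t}-\cos(\lambda_{\mathrm{I}}t)\mathrm{e}^{\lambda_{\mathrm{R}}t}=\ml{O}(\mathrm{e}^{-c|\xi|^2t})$ so that the $|\xi|^{-2}$ prefactor is harmless, recovers $\widehat{\ml{G}}_{1,k}\widehat{\theta}_0$ at leading order and produces $\widehat{\ml{H}}_{1,k}\widehat{\theta}_0$ from the $\delta\sin(\beta_1|\xi|t)\mathrm{e}^{-\beta_2|\xi|^2t}$ contribution of $\cos(\lambda_{\mathrm{I}}t)$.

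Every remainder that results is of the schematic form $|\xi|^at^b\mathrm{e}^{-c|\xi|^2t}$, and I would close the argument with the elementary parabolic bound $|\xi|^at^b\mathrm{e}^{-c|\xi|^2t}=|\xi|^{a-2b}(|\xi|^2t)^b\mathrm{e}^{-c|\xi|^2t}\lesssim|\xi|^{a-2b}\mathrm{e}^{-c'|\xi|^2t}$ on $\ml{Z}_{\intt}(\varepsilon_0)$ together with $|\sin|,|\cos|\leqslant1$. A careful bookkeeping shows that without the $\widehat{\ml{H}}$-corrections the leftovers satisfy $a-2b\geqslant0$, giving the first estimate, while the deviations coming from $\delta/(\beta_1|\xi|)$, from $\ml{O}(\delta^2)$, from $\ml{O}(|\xi|^4t)$ in the exponentials, and from the $\ml{O}(|\xi|^2)$ correction of the reciprocal denominator, once $\widehat{\ml{H}}_0\widehat{u}_1^{k,p_0}+\widehat{\ml{H}}_{1,k}\widehat{\theta}_0$ has been subtracted, all satisfy $a-2b\geqslant1$, which yields the gain of one power of $|\xi|$ in the second estimate; equivalently, since $\widehat{\ml{H}}_0$ and $\widehat{\ml{H}}_{1,k}$ are themselves $\ml{O}(\mathrm{e}^{-c|\xi|^2t})$, the first estimate follows from the second by the triangle inequality. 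The main obstacle --- where genuine care is required --- is the $\widehat{\theta}_0$-term: the $|\xi|^{-2}$ prefactor multiplying the near-cancellation $\mathrm{e}^{\lambda_1t}-\cos(\lambda_{\mathrm{I}}t)\mathrm{e}^{\lambda_{\mathrm{R}}t}\to0$ as $|\xi|\to0$ forces one to expand $\mathrm{e}^{\lambda_1t}-\mathrm{e}^{-\beta_0|\xi|^2t}$ and $\cos(\lambda_{\mathrm{I}}t)\mathrm{e}^{\lambda_{\mathrm{R}}t}-\cos(\beta_1|\xi|t)\mathrm{e}^{-\beta_2|\xi|^2t}$ one order beyond the naive matching and to exploit $\lambda_1+\beta_0|\xi|^2=\ml{O}(|\xi|^4)$ from Proposition \ref{Prop-Expansions}, so as to verify that the leftover really carries two powers of $|\xi|$ more than $|\xi|^{-2}$, i.e. the claimed $|\xi|\mathrm{e}^{-c|\xi|^2t}$.
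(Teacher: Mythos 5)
Your proposal is correct and follows essentially the same route as the paper: split $\widehat{J}_0^k$ according to the two data, use the higher-order expansions of $\lambda_1,\lambda_{\mathrm{R}},\lambda_{\mathrm{I}}$ from Proposition \ref{Prop-Expansions} to compare amplitudes, phases and exponentials with $\widehat{\ml{G}}_0,\widehat{\ml{G}}_{1,k}$, identify the $\ml{O}(|\xi|^3t)$ phase correction as the source of $\widehat{\ml{H}}_0,\widehat{\ml{H}}_{1,k}$, and absorb all remainders via $|\xi|^at^b\mathrm{e}^{-c|\xi|^2t}\lesssim|\xi|^{a-2b}\mathrm{e}^{-c'|\xi|^2t}$. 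The only (harmless) cosmetic differences are your angle-addition formulation in place of the paper's direct Taylor expansions \eqref{Taylor-Expan}, the explicit denominator identity, and deducing the first estimate from the second by the triangle inequality.
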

\begin{proof}
According to the representations of these functions, we split our first target into two parts
\begin{align*}
&\widehat{J}_0^k-\widehat{\ml{G}}_0\widehat{u}_1^{k,p_0}-\widehat{\ml{G}}_{1,k}\widehat{\theta}_0\\
&\qquad=\left(\frac{-\lambda_{\mathrm{I}}\sin(\lambda_{\mathrm{I}}t)\mathrm{e}^{\lambda_{\mathrm{R}}t}}{2\lambda_{\mathrm{R}}\lambda_1-\lambda_{\mathrm{I}}^2-\lambda_{\mathrm{R}}^2-\lambda_{1}^2}-\frac{\sin(\sqrt{b^2+\gamma_1\gamma_2}|\xi|t)}{\sqrt{b^2+\gamma_1\gamma_2}|\xi|}\mathrm{e}^{-\frac{\kappa\gamma_1\gamma_2}{2(b^2+\gamma_1\gamma_2)}|\xi|^2t} \right)\widehat{u}_1^{k,p_0}\\
&\qquad\quad+\left(\frac{\mathrm{e}^{\lambda_1t}-\cos(\lambda_{\mathrm{I}}t)\mathrm{e}^{\lambda_{\mathrm{R}}t}}{2\lambda_{\mathrm{R}}\lambda_1-\lambda_{\mathrm{I}}^2-\lambda_{\mathrm{R}}^2-\lambda_{1}^2}-\frac{\cos(\sqrt{b^2+\gamma_1\gamma_2}|\xi|t)\mathrm{e}^{-\frac{\kappa\gamma_1\gamma_2}{2(b^2+\gamma_1\gamma_2)}|\xi|^2t}-\mathrm{e}^{-\frac{\kappa b^2}{b^2+\gamma_1\gamma_2}|\xi|^2t}}{(b^2+\gamma_1\gamma_2)|\xi|^2} \right)i\gamma_1\xi_k\widehat{\theta}_0\\
&\qquad=:\widehat{I}_0\widehat{u}_1^{k,p_0}+\widehat{I}_1i\gamma_1\xi_k\widehat{\theta}_0.
\end{align*}
Let us begin with estimates by the decomposition
\begin{align*}
\widehat{I}_0&=\left(\frac{-\lambda_{\mathrm{I}}}{2\lambda_{\mathrm{R}}\lambda_{1}-\lambda_{\mathrm{I}}^2-\lambda_{\mathrm{R}}^2-\lambda_{1}^2}-\frac{1}{\sqrt{b^2+\gamma_1\gamma_2}|\xi|}\right)\sin(\lambda_{\mathrm{I}}t)\mathrm{e}^{\lambda_{\mathrm{R}}t}+\frac{\sin(\lambda_{\mathrm{I}}t)-\sin(\sqrt{b^2+\gamma_1\gamma_2}|\xi|t)}{\sqrt{b^2+\gamma_1\gamma_2}|\xi|}\mathrm{e}^{\lambda_{\mathrm{R}}t}\\
&\quad+\frac{\sin(\sqrt{b^2+\gamma_1\gamma_2}|\xi|t)}{\sqrt{b^2+\gamma_1\gamma_2}|\xi|}\left(\mathrm{e}^{\lambda_{\mathrm{R}}t}-\mathrm{e}^{-\frac{\kappa\gamma_1\gamma_2}{2(b^2+\gamma_1\gamma_2)}|\xi|^2t}\right)\\
&=:\widehat{I}_{0,1}+\widehat{I}_{0,2}+\widehat{I}_{0,3}.
\end{align*}
The direct computations find $\lambda_{\mathrm{I}}-\sqrt{b^2+\gamma_1\gamma_2}|\xi|=\ml{O}(|\xi|^3)$ benefited from higher-order expansions of characteristic roots, and one deduces
\begin{align*}
\chi_{\intt}(\xi)|\widehat{I}_{0,1}|&\leqslant \chi_{\intt}(\xi)\left|\frac{-\lambda_{\mathrm{I}}\sqrt{b^2+\gamma_1\gamma_2}|\xi|-2\lambda_{\mathrm{R}}\lambda_{1}+\lambda_{\mathrm{I}}^2+\lambda_{\mathrm{R}}^2+\lambda_{1}^2}{(2\lambda_{\mathrm{R}}\lambda_{1}-\lambda_{\mathrm{I}}^2-\lambda_{\mathrm{R}}^2-\lambda_{1}^2)\sqrt{b^2+\gamma_1\gamma_2}|\xi|}\right|\mathrm{e}^{\lambda_{\mathrm{R}}t}\\
&\lesssim\chi_{\intt}(\xi)|\xi|\mathrm{e}^{-c|\xi|^2t}.
\end{align*}
Next, one may employ Taylor's expansion as $\xi\in\ml{Z}_{\intt}(\varepsilon_0)$ to arrive at
\begin{align}\label{Taylor-Expan}
	\sin(\lambda_{\mathrm{I}}t)-\sin\left(\sqrt{b^2+\gamma_1\gamma_2}|\xi|t\right)=-\frac{\kappa^2\gamma_1\gamma_2(\gamma_1\gamma_2+4b^2)}{8(b^2+\gamma_1\gamma_2)^{5/2}}|\xi|^3t\cos\left(\sqrt{b^2+\gamma_1\gamma_2}|\xi|t\right)+\ml{O}(|\xi|^6)t^2,
\end{align}
whose first-order term yields that
\begin{align*}
\chi_{\intt}(\xi)|\widehat{I}_{0,2}|\lesssim\chi_{\intt}(\xi)|\xi|^2t\mathrm{e}^{-c|\xi|^2t}\lesssim\chi_{\intt}(\xi)\mathrm{e}^{-c|\xi|^2t}.
\end{align*}
Again thanks to the higher-order expansion so that $\lambda_{\mathrm{R}}+\frac{\kappa\gamma_1\gamma_2}{2(b^2+\gamma_1\gamma_2)}|\xi|^2=\ml{O}(|\xi|^4)$, the last difference can be controlled by viewing an integral form
\begin{align*}
\chi_{\intt}(\xi)|\widehat{I}_{0,3}|&\lesssim\chi_{\intt}(\xi)\frac{1}{|\xi|}\mathrm{e}^{-\frac{\kappa\gamma_1\gamma_2}{2(b^2+\gamma_1\gamma_2)}|\xi|^2t}\left|\mathrm{e}^{\lambda_{\mathrm{R}}t+\frac{\kappa\gamma_1\gamma_2}{2(b^2+\gamma_1\gamma_2)}|\xi|^2t}-1\right|\\
&\lesssim\chi_{\intt}(\xi)|\xi|^3t\mathrm{e}^{-\frac{\kappa\gamma_1\gamma_2}{2(b^2+\gamma_1\gamma_2)}|\xi|^2t}\left|\int_0^1\mathrm{e}^{\ml{O}(|\xi|^4)ts}\mathrm{d}s\right|\\
&\lesssim\chi_{\intt}(\xi)|\xi|\mathrm{e}^{-c|\xi|^2t}.
\end{align*}
Summarizing the obtained estimates, we have
\begin{align*}
\chi_{\intt}(\xi)|\widehat{I}_0\widehat{u}_1^{k,p_0}|\lesssim\chi_{\intt}(\xi)\mathrm{e}^{-c|\xi|^2t}|\widehat{u}_1^{k,p_0}|.
\end{align*}
Among them, the worst term is $\widehat{I}_{0,2}$ since the lack of the factor $|\xi|$. If we subtract the additional profile $\widehat{\ml{H}}_0$, it leads to
\begin{align*}
&\chi_{\intt}(\xi)|\widehat{I}_{0,2}-\widehat{\ml{H}}_0|\\
&\qquad\leqslant\chi_{\intt}(\xi)\left|\frac{\sin(\lambda_{\mathrm{I}}t)-\sin(\sqrt{b^2+\gamma_1\gamma_2}|\xi|t)+\frac{\kappa^2\gamma_1\gamma_2(\gamma_1\gamma_2+4b^2)}{8(b^2+\gamma_1\gamma_2)^{5/2}}|\xi|^3t\cos(\sqrt{b^2+\gamma_1\gamma_2}|\xi|t)}{\sqrt{b^2+\gamma_1\gamma_2}|\xi|}\right|\mathrm{e}^{\lambda_{\mathrm{R}}t}\\
&\qquad\quad+\chi_{\intt}(\xi)\frac{\kappa^2\gamma_1\gamma_2(\gamma_1\gamma_2+4b^2)}{8(b^2+\gamma_1\gamma_2)^3}|\xi|^2t\left|\cos\left(\sqrt{b^2+\gamma_1\gamma_2}|\xi|t\right)\right|\left|\mathrm{e}^{\lambda_{\mathrm{R}}t}-\mathrm{e}^{-\frac{\kappa\gamma_1\gamma_2}{2(b^2+\gamma_1\gamma_2)}|\xi|^2t}\right|\\
&\qquad\lesssim\chi_{\intt}(\xi)|\xi|^5t^2\mathrm{e}^{-c|\xi|^2t}\\
&\qquad\lesssim\chi_{\intt}(\xi)|\xi|\mathrm{e}^{-c|\xi|^2t},
\end{align*}
where we employed the expansion \eqref{Taylor-Expan} by moving the first term from the right side to the left one.

Then, we repeat the analogous idea as the previous one to have
\begin{align*}
	\chi_{\intt}(\xi)|\widehat{I}_1|\lesssim\chi_{\intt}(\xi)\left(1+|\xi|t+|\xi|^2t\right)\mathrm{e}^{-c|\xi|^2t}\lesssim\chi_{\intt}(\xi)(1+\sqrt{t})\mathrm{e}^{-c|\xi|^2t},
\end{align*}
where the worst term $|\xi|t$ comes from the mean value theorem
\begin{align*}
\chi_{\intt}(\xi)\left|\cos(\lambda_{\mathrm{I}}t)-\cos\left(\sqrt{b^2+\gamma_1\gamma_2}|\xi|t\right)\right|\lesssim\chi_{\intt}(\xi)|\xi|^3t.
\end{align*}
Again by noticing Taylor's formula
\begin{align*}
\cos(\lambda_{\mathrm{I}}t)-\cos\left(\sqrt{b^2+\gamma_1\gamma_2}|\xi|t\right)=\sin\left(\sqrt{b^2+\gamma_1\gamma_2}|\xi|t\right)\frac{\kappa^2\gamma_1\gamma_2(\gamma_1\gamma_2+4b^2)}{8(b^2+\gamma_1\gamma_2)^{5/2}}|\xi|^3t+\ml{O}(|\xi|^6)t^2,
\end{align*}
we may obtain immediately
\begin{align*}
\chi_{\intt}(\xi)\left|\widehat{I}_1i\gamma_1\xi_k-\widehat{\ml{H}}_{1,k}\right|\lesssim\chi_{\intt}(\xi)|\xi|\mathrm{e}^{-c|\xi|^2t}.
\end{align*}
In conclusion, it yields
\begin{align*}
\chi_{\intt}(\xi)\left|\widehat{J}_0^k-\widehat{\ml{G}}_0\widehat{u}_1^{k,p_0}-\widehat{\ml{G}}_{1,k}\widehat{\theta}_0\right|&\lesssim\chi_{\intt}(\xi)\left(|\widehat{I}_0|\,|\widehat{u}_1^{k,p_0}|+|\widehat{I}_{1}\xi_k|\,|\widehat{\theta}_0|\right)\\
&\lesssim\chi_{\intt}(\xi)\mathrm{e}^{-c|\xi|^2t}\left(|\widehat{u}_1^{k,p_0}|+|\widehat{\theta}_0|\right)
\end{align*}
and
\begin{align*}
	&\chi_{\intt}(\xi)\left|\widehat{J}_0^k-(\widehat{\ml{G}}_0+\widehat{\ml{H}}_0)\widehat{u}_1^{k,p_0}-(\widehat{\ml{G}}_{1,k}+\widehat{\ml{H}}_{1,k})\widehat{\theta}_0\right|\\
	&\qquad\lesssim\chi_{\intt}(\xi)\left(\,\left|\widehat{I}_0-\widehat{\ml{H}}_0\right||\widehat{u}_1^{k,p_0}|+\left|\widehat{I}_1i\gamma_1\xi_k-\widehat{\ml{H}}_{1,k}\right||\widehat{\theta}_0|\right)\\
	&\qquad\lesssim\chi_{\intt}(\xi)|\xi|\mathrm{e}^{-c|\xi|^2t}\left(|\widehat{u}_1^{k,p_0}|+|\widehat{\theta}_0|\right).
\end{align*}
The proof is complete now.
\end{proof}

Secondly, we employ the analogous ideas as the treatment of $\widehat{J}_0^k$ and denote $\widehat{\ml{G}}_{2}=\widehat{\ml{G}}_{2}(t,|\xi|)$, $\widehat{\ml{G}}_{3}=\widehat{\ml{G}}_{3}(t,|\xi|)$  and $\widehat{\ml{G}}_{4,k}=\widehat{\ml{G}}_{4,k}(t,\xi)$ by
\begin{align*}
	\widehat{\ml{G}}_2&:=\frac{\gamma_1\gamma_2}{b^2+\gamma_1\gamma_2}\mathrm{e}^{-\frac{\kappa b^2}{b^2+\gamma_1\gamma_2}|\xi|^2t}+\frac{b^2}{b^2+\gamma_1\gamma_2}\cos\left(\sqrt{b^2+\gamma_1\gamma_2}|\xi|t\right)\mathrm{e}^{-\frac{\kappa\gamma_1\gamma_2}{2(b^2+\gamma_1\gamma_2)}|\xi|^2t},\\ 
	\widehat{\ml{G}}_3&:=\frac{\kappa\gamma_1\gamma_2}{(b^2+\gamma_1\gamma_2)^2}\mathrm{e}^{-\frac{\kappa b^2}{b^2+\gamma_1\gamma_2}|\xi|^2t}-\cos\left(\sqrt{b^2+\gamma_1\gamma_2}|\xi|t\right)\mathrm{e}^{-\frac{\kappa \gamma_1\gamma_2}{2(b^2+\gamma_1\gamma_2)}|\xi|^2t},\\ 
	\widehat{\ml{G}}_{4,k}&:=\frac{i\gamma_1\kappa(\gamma_1\gamma_2-2b^2)}{(b^2+\gamma_1\gamma_2)^{5/2}}\frac{\xi_k}{|\xi|}\sin\left(\sqrt{b^2+\gamma_1\gamma_2}|\xi|t\right)\mathrm{e}^{-\frac{\kappa\gamma_1\gamma_2}{2(b^2+\gamma_1\gamma_2)}|\xi|^2t}.
\end{align*}
Following the proof of the first estimate in Proposition \ref{Prop-Improvement-J0}, we can obtain the next result. Since the method of the demonstration is the same with slight changes of procedure, we omit the proof.
\begin{prop}\label{Prop-Improvement-J1}
	Concerning $\xi\in\ml{Z}_{\intt}(\varepsilon_0)$, the following estimate for an approximation holds:
	\begin{align*}
		\chi_{\intt}(\xi)\left|\widehat{J}_1^k-\widehat{\ml{G}}_2\widehat{u}_0^{k,p_0}-\widehat{\ml{G}}_{3}\widehat{u}_1^{k,p_0}-\widehat{\ml{G}}_{4,k}\widehat{\theta}_0\right|&\lesssim\chi_{\intt}(\xi)|\xi|\mathrm{e}^{-c|\xi|^2t}\left(|\widehat{u}_0^{k,p_0}|+|\widehat{u}_1^{k,p_0}|+|\widehat{\theta}_0|\right)
	\end{align*}
for any $k=1,\dots,n$.
\end{prop}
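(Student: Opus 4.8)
The plan is to repeat, term by term, the scheme already carried out for the first estimate of Proposition~\ref{Prop-Improvement-J0}, now applied to the three constituents of $\widehat{J}_1^k$. First I would split the quantity to be estimated according to the three data,
\[
\widehat{J}_1^k-\widehat{\ml{G}}_2\widehat{u}_0^{k,p_0}-\widehat{\ml{G}}_{3}\widehat{u}_1^{k,p_0}-\widehat{\ml{G}}_{4,k}\widehat{\theta}_0=\widehat{K}_0\,\widehat{u}_0^{k,p_0}+\widehat{K}_1\,\widehat{u}_1^{k,p_0}+i\gamma_1\xi_k\widehat{K}_2\,\widehat{\theta}_0,
\]
where $\widehat{K}_0$ is the $\widehat{u}_0^{k,p_0}$-amplitude of $\widehat{J}_1^k$ minus $\widehat{\ml{G}}_2$, $\widehat{K}_1$ is the $\widehat{u}_1^{k,p_0}$-amplitude of $\widehat{J}_1^k$ minus $\widehat{\ml{G}}_3$, and $\widehat{K}_2=\tfrac{\lambda_{\mathrm{R}}-\lambda_1}{\lambda_{\mathrm{I}}(2\lambda_{\mathrm{R}}\lambda_1-\lambda_{\mathrm{I}}^2-\lambda_{\mathrm{R}}^2-\lambda_1^2)}\sin(\lambda_{\mathrm{I}}t)\mathrm{e}^{\lambda_{\mathrm{R}}t}$ minus the regular factor in $\widehat{\ml{G}}_{4,k}/(i\gamma_1\xi_k)$. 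It then suffices to show $\chi_{\intt}(\xi)|\widehat{K}_j|\lesssim\chi_{\intt}(\xi)|\xi|\mathrm{e}^{-c|\xi|^2t}$ for $j=0,1$ and $\chi_{\intt}(\xi)|\xi|\,|\widehat{K}_2|\lesssim\chi_{\intt}(\xi)|\xi|\mathrm{e}^{-c|\xi|^2t}$, and the statement follows by the triangle inequality.

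For each of these I would telescope the difference into three channels. Channel (i): replace the rational-in-$\lambda$ amplitude by its $|\xi|\to0$ limit; using Proposition~\ref{Prop-Expansions}, namely $2\lambda_{\mathrm{R}}\lambda_1-\lambda_{\mathrm{I}}^2-\lambda_{\mathrm{R}}^2-\lambda_1^2=-(b^2+\gamma_1\gamma_2)|\xi|^2+\ml{O}(|\xi|^4)$, $\lambda_1=-\tfrac{\kappa b^2}{b^2+\gamma_1\gamma_2}|\xi|^2+\ml{O}(|\xi|^4)$, $\lambda_{\mathrm{R}}=-\tfrac{\kappa\gamma_1\gamma_2}{2(b^2+\gamma_1\gamma_2)}|\xi|^2+\ml{O}(|\xi|^4)$ and $\lambda_{\mathrm{I}}=\sqrt{b^2+\gamma_1\gamma_2}\,|\xi|+\ml{O}(|\xi|^3)$, one checks that each limit is exactly the constant (or, for $\widehat{\ml{G}}_{4,k}$, the bounded symbol $\xi_k/|\xi|$ obtained by dividing the $\ml{O}(|\xi|^2)$ numerator $\lambda_{\mathrm{R}}-\lambda_1$ by the $\ml{O}(|\xi|^3)$ quantity $\lambda_{\mathrm{I}}\cdot(\mathrm{denominator})$) appearing in $\widehat{\ml{G}}_2,\widehat{\ml{G}}_3,\widehat{\ml{G}}_{4,k}$, the remainder being $\ml{O}(|\xi|^2)$. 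Channel (ii): replace $\cos(\lambda_{\mathrm{I}}t),\sin(\lambda_{\mathrm{I}}t)$ by $\cos(\sqrt{b^2+\gamma_1\gamma_2}|\xi|t),\sin(\sqrt{b^2+\gamma_1\gamma_2}|\xi|t)$; the discrepancies are $\ml{O}(|\xi|^3t)$ by the mean value theorem, or by the Taylor identity~\eqref{Taylor-Expan} and its cosine analogue already recorded above. Channel (iii): replace $\mathrm{e}^{\lambda_1t},\mathrm{e}^{\lambda_{\mathrm{R}}t}$ by $\mathrm{e}^{-\frac{\kappa b^2}{b^2+\gamma_1\gamma_2}|\xi|^2t},\mathrm{e}^{-\frac{\kappa\gamma_1\gamma_2}{2(b^2+\gamma_1\gamma_2)}|\xi|^2t}$; exactly as for the $\widehat{I}_{0,3}$ term in Proposition~\ref{Prop-Improvement-J0}, factor out $\mathrm{e}^{-c|\xi|^2t}$ and write $\mathrm{e}^{(\lambda-\lambda_\ast)t}-1=(\lambda-\lambda_\ast)t\int_0^1\mathrm{e}^{(\lambda-\lambda_\ast)ts}\mathrm{d}s$ with $\lambda-\lambda_\ast=\ml{O}(|\xi|^4)$, which is $\ml{O}(|\xi|^4t)\mathrm{e}^{-c|\xi|^2t}$. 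Collecting the three channels, every surviving term is $\lesssim|\xi|^2\mathrm{e}^{-c|\xi|^2t}$ or $\lesssim|\xi|^3t\,\mathrm{e}^{-c|\xi|^2t}$, and on $\ml{Z}_{\intt}(\varepsilon_0)$ both are $\lesssim|\xi|\mathrm{e}^{-c|\xi|^2t}$ thanks to $|\xi|\leqslant\varepsilon_0\ll1$ and the elementary bound $|\xi|^2t\,\mathrm{e}^{-c_0|\xi|^2t}\lesssim1$ for $c_0<c$, together with boundedness of $|\sin|,|\cos|$, precisely as in Propositions~\ref{Prop-Pointwise} and~\ref{Prop-Improvement-J0}.

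The delicate point will be the $\widehat{\theta}_0$-channel, where the amplitude carries the singular factor $|\xi|^{-1}$: one must keep the compensating $\xi_k$ from the numerator of $\widehat{J}_1^k$ so that the net multiplier is the bounded $\xi_k/|\xi|$ times a regular factor, before pairing it with the $\ml{O}(|\xi|^3t)$ sine-discrepancy. Only then does that contribution land at $\ml{O}(|\xi|^3t)\mathrm{e}^{-c|\xi|^2t}\lesssim|\xi|\mathrm{e}^{-c|\xi|^2t}$, rather than at the merely bounded $\ml{O}(|\xi|^2t)$ quantity that in Proposition~\ref{Prop-Improvement-J0} forced the introduction of the correctors $\widehat{\ml{H}}_0,\widehat{\ml{H}}_{1,k}$; since here only the $\ml{O}(|\xi|)$ refinement is claimed, no such corrector is needed. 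Beyond this, the argument is the routine bookkeeping of which Taylor remainder feeds which channel for $\widehat{K}_0,\widehat{K}_1,\widehat{K}_2$, and no new phenomenon appears compared with the computations already performed, which is why it may be omitted.
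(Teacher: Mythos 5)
Your proposal is correct and is essentially the paper's own route: the paper omits the proof of this proposition, saying only that it follows the first estimate of Proposition~\ref{Prop-Improvement-J0}, and your datum-by-datum splitting with the three telescoping channels (replacing the rational amplitudes by their $|\xi|\to0$ limits via Proposition~\ref{Prop-Expansions}, replacing $\cos(\lambda_{\mathrm{I}}t),\sin(\lambda_{\mathrm{I}}t)$ by the $\beta_1|\xi|t$-phases with $\ml{O}(|\xi|^3t)$ error, and replacing $\mathrm{e}^{\lambda_1t},\mathrm{e}^{\lambda_{\mathrm{R}}t}$ by the Gaussian factors with $\ml{O}(|\xi|^4t)$ error), together with the key remark that the factor $\xi_k$ neutralizes the $|\xi|^{-1}$ singularity in the $\widehat{\theta}_0$-channel so that no corrector of $\widehat{\ml{H}}$-type is needed, is exactly that method. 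The only step you leave implicit is the explicit evaluation of the $|\xi|\to0$ limits of the three amplitudes (the computation that pins down the constants in $\widehat{\ml{G}}_2,\widehat{\ml{G}}_3,\widehat{\ml{G}}_{4,k}$), which is routine given Proposition~\ref{Prop-Expansions} and is worth writing out once, since it is the place where the precise numerical coefficients are checked.
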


Eventually, to finish this subsection, we propose pointwise estimates localizing in bounded and large frequencies zones. They will not influence on large-time behaviors since exponential decays.
\begin{prop}\label{Prop-Pointwise-large}
Concerning $\xi\in\ml{Z}_{\bdd}(\varepsilon_0,N_0)\cup\ml{Z}_{\extt}(N_0)$, the following pointwise estimate holds:
\begin{align*}
	\big(\chi_{\bdd}(\xi)+\chi_{\extt}(\xi)\big)|\widehat{u}^{k,p_0}|\lesssim\big(\chi_{\bdd}(\xi)+\chi_{\extt}(\xi)\big)\mathrm{e}^{-ct}\left(|\widehat{u}_0^{k,p_0}|+\frac{1}{\langle\xi\rangle}|\widehat{u}_1^{k,p_0}|+\frac{1}{\langle \xi\rangle^2}|\widehat{\theta}_0|\right)
\end{align*}
for any $k=1,\dots,n$.
\end{prop}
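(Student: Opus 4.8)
The plan is to exploit the fact that for frequencies bounded away from zero the characteristic roots of the cubic \eqref{cubic-eq} all have real parts bounded above by a fixed negative constant, so that every component of the solution decays like $\mathrm{e}^{-ct}$ uniformly, and the only task is to keep track of the frequency weights attached to each piece of initial data. First I would invoke Proposition \ref{Prop-Expansions}: on $\ml{Z}_{\bdd}(\varepsilon_0,N_0)$ the roots satisfy $\Re\lambda_j<0$ (by the compactness/continuity argument already recorded there, using that the coefficients of \eqref{cubic-eq} are continuous in $|\xi|$ and that no root can reach the imaginary axis for $\varepsilon_0\le|\xi|\le N_0$), and on $\ml{Z}_{\extt}(N_0)$ one has $\Re\lambda_1=-\kappa|\xi|^2+\ml{O}(|\xi|)\le -c$ and $\Re\lambda_{2,3}=-\tfrac{\gamma_1\gamma_2}{2\kappa}+\ml{O}(|\xi|^{-1})\le -c$ for $N_0$ large enough. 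Hence there is a uniform $c>0$ with $\max_j\Re\lambda_j(|\xi|)\le -c$ for all $\xi\in\ml{Z}_{\bdd}(\varepsilon_0,N_0)\cup\ml{Z}_{\extt}(N_0)$.

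Next I would write $\widehat{u}^{k,p_0}$ in terms of the fundamental solutions $\mathrm{e}^{\lambda_j t}$ of the scalar ODE \eqref{Eq-Fourth-Fourier}, i.e. $\widehat{u}^{k,p_0}(t,\xi)=\sum_{j=1}^3 c_j(\xi)\,\mathrm{e}^{\lambda_j(|\xi|)t}$, where the coefficients $c_j(\xi)$ are determined by the Vandermonde-type linear system coming from the three initial conditions $\widehat{u}^{k,p_0}_0$, $\widehat{u}^{k,p_0}_1$ and $\widehat{u}^{k,p_0}_2=-b^2|\xi|^2\widehat{u}^{k,p_0}_0-i\gamma_1\xi_k\widehat{\theta}_0$. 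Solving by Cramer's rule, each $c_j(\xi)$ is a rational function of the $\lambda_j$'s and of $|\xi|$; the denominator is (up to sign) $\prod_{i<l}(\lambda_i-\lambda_l)$, which is bounded below in modulus on the bounded zone (roots pairwise distinct there, again by continuity) and behaves like $|\xi|^2$ on the exterior zone since $\lambda_2-\lambda_3\sim 2ib|\xi|$ and $\lambda_1-\lambda_{2,3}\sim -\kappa|\xi|^2$. Tracking the numerators, the coefficient multiplying $\widehat{u}^{k,p_0}_1$ carries one fewer power of $|\xi|$ than the one multiplying $\widehat{u}^{k,p_0}_0$, and the term proportional to $\widehat{\theta}_0$ comes with the extra factor $\xi_k$ from $\widehat{u}_2^{k,p_0}$ but still loses two powers relative to the natural growth; after collecting, one gets exactly the weights $1$, $\langle\xi\rangle^{-1}$, $\langle\xi\rangle^{-2}$ advertised in the statement. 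Combining with $|\mathrm{e}^{\lambda_j t}|\le \mathrm{e}^{-ct}$ and the cut-off functions $\chi_{\bdd},\chi_{\extt}$ finishes the estimate.

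The only genuinely delicate point is the bookkeeping on $\ml{Z}_{\extt}(N_0)$: there $\lambda_1$ is of size $|\xi|^2$ while $\lambda_{2,3}$ are of size $|\xi|$, so the Vandermonde denominator and the various numerator minors have \emph{different} polynomial orders in $|\xi|$, and one must check that the $|\xi|$-powers cancel so that no spurious growth in $|\xi|$ survives — in particular that the coefficient of $\widehat{\theta}_0$, which formally looks like $|\xi|\cdot(\text{numerator})/(\text{denominator of order }|\xi|^2)$, actually decays like $\langle\xi\rangle^{-2}$ once the numerator's own $|\xi|$-order (which is $|\xi|$, coming from the $\lambda_1$-row) is taken into account. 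This is a routine but slightly tedious expansion; alternatively one can avoid it entirely by running the diagonalization/WKB scheme of \cite{Jachmann-Reissig=2009} on $\ml{Z}_{\extt}(N_0)$, which directly yields the stated weights together with the exponential decay, and appealing to a standard energy estimate on the compact zone $\ml{Z}_{\bdd}(\varepsilon_0,N_0)$ where all quantities are comparable. Either route gives the claim, and since no large-time information is lost (the factor $\mathrm{e}^{-ct}$ dominates everything), I would present the shorter of the two.
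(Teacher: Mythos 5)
Your strategy is essentially the paper's: the representation $\widehat{u}^{k,p_0}=\sum_{j=1}^3 c_j(\xi)\,\mathrm{e}^{\lambda_j t}$ that you obtain from the Vandermonde system is exactly the formula \eqref{Rep-01} (written there with $\lambda_{2,3}=\lambda_{\mathrm{R}}\pm i\lambda_{\mathrm{I}}$, which is legitimate on $\ml{Z}_{\extt}(N_0)$ because the discriminant of \eqref{cubic-eq} is negative there), and the paper likewise inserts the exterior-zone expansions of Proposition \ref{Prop-Expansions} to read off the weights $1,\langle\xi\rangle^{-1},\langle\xi\rangle^{-2}$, while on the bounded zone it only invokes $\Re\lambda_j<0$. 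However, your quantitative bookkeeping on $\ml{Z}_{\extt}(N_0)$ is wrong as written: with $\lambda_1\sim-\kappa|\xi|^2$ and $\lambda_{2,3}\sim\pm ib|\xi|$ the full Vandermonde determinant is of size $|\xi|^{5}$, not $|\xi|^{2}$, and the denominators that actually occur in the individual coefficients are $(\lambda_1-\lambda_2)(\lambda_1-\lambda_3)\sim|\xi|^{4}$ for the parabolic branch and $(\lambda_2-\lambda_1)(\lambda_2-\lambda_3)\sim|\xi|^{3}$ for the oscillatory branches. Redoing the count with these orders (numerators $\lambda_2\lambda_3\sim|\xi|^2$, $\lambda_1\lambda_3\sim|\xi|^3$, $\lambda_2+\lambda_3=O(1)$, $\lambda_1+\lambda_3\sim|\xi|^2$, plus the $-b^2|\xi|^2\widehat{u}_0^{k,p_0}$ and $-i\gamma_1\xi_k\widehat{\theta}_0$ coming from $\widehat{u}_2^{k,p_0}$) does yield exactly the stated weights, and in fact no cancellation is needed at all; but this is the substance of the proof, and your sketch both misstates the orders and then defers the verification, so the "routine check" must actually be carried out (as the paper does explicitly in its two-line estimate for $\chi_{\extt}$).

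The second point is a genuine gap on $\ml{Z}_{\bdd}(\varepsilon_0,N_0)$: you claim the roots are pairwise distinct there "again by continuity", but continuity of the roots does not preclude collisions, and Proposition \ref{Prop-Expansions} asserts only $\Re\lambda_j<0$ on that zone (the negative discriminant is claimed only for small and large frequencies). If two roots merge at some bounded frequency, your coefficients $c_j(\xi)$ blow up and the pure-exponential ansatz degenerates, so the argument as stated fails there. The repair is easy and is what the paper implicitly relies on: either allow confluent terms $t^m\mathrm{e}^{\lambda_j t}$, which still give $\mathrm{e}^{-ct}$ after slightly decreasing $c$, or use compactness of the zone together with uniform bounds on the fundamental matrix of \eqref{Eq-Fourth-Fourier}; no frequency weights are needed there since $|\xi|\simeq\langle\xi\rangle\simeq1$, after which the two zones are combined exactly as in your last step.
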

\begin{proof}
Taking consideration of Proposition \ref{Prop-Expansions} as $\xi\in\ml{Z}_{\extt}(N_0)$ in the representation \eqref{Rep-01}, we are able to estimate
\begin{align*}
\chi_{\extt}(\xi)|\widehat{u}^{k,p_0}|
&\lesssim\chi_{\extt}(\xi)\left(\frac{1}{|\xi|^2}\mathrm{e}^{-c|\xi|^2t}+\mathrm{e}^{-ct}\right)|\widehat{u}_0^{k,p_0}|+\chi_{\extt}(\xi)\left(\frac{1}{|\xi|^4}\mathrm{e}^{-c|\xi|^2t}+\frac{1}{|\xi|}\mathrm{e}^{-ct}\right)|\widehat{u}_1^{k,p_0}|\\
&\quad+\chi_{\extt}(\xi)\left(\frac{1}{|\xi|^3}\mathrm{e}^{-c|\xi|^2t}+\frac{1}{|\xi|^2}\mathrm{e}^{-ct}\right)|\widehat{\theta}_0|\\
&\lesssim\chi_{\extt}(\xi)\mathrm{e}^{-ct}\left(|\widehat{u}_0^{k,p_0}|+\frac{1}{|\xi|}|\widehat{u}_1^{k,p_0}|+\frac{1}{|\xi|^2}|\widehat{\theta}_0|\right).
\end{align*}
For another, the last conclusion in Proposition \ref{Prop-Expansions} tells us that the solution in the Fourier space decays exponentially such that
\begin{align*}
\chi_{\bdd}(\xi)|\widehat{u}^{k,p_0}|\lesssim \chi_{\bdd}(\xi) \mathrm{e}^{-ct}\left(|\widehat{u}_0^{k,p_0}|+|\widehat{u}_1^{k,p_0}|+|\widehat{\theta}_0|\right).
\end{align*}
The combination of previous two estimates and $|\xi|\simeq\langle \xi\rangle$ for any $|\xi|\geqslant\varepsilon_0$ completes the proof.
\end{proof}

\section{Large-time asymptotic profiles for the potential solution}
The schedule of this section is arranged by: in Subsection \ref{Sub-Sharp-Est}, we will derive upper bound estimates  and optimal lower bound estimates \eqref{Estimates:Upper-Bound} for any physical dimensions $n=1,2,3$; in Subsection \ref{Sub-Second-Profile}, some estimates for the second-order profiles will be deduced; and finally in Subsection \ref{Sub-Leading}, the optimal leading terms will be investigated.

\subsection{Optimal estimates and first-order profiles for the potential solution}\label{Sub-Sharp-Est}
This part contributes to the proof of Theorem \ref{Thm-Optimal-Est}. The second estimate in Proposition \ref{Prop-Pointwise} shows
\begin{align*}
\|\chi_{\intt}(\xi)\widehat{u}^{k,p_0}(t,\xi)\|_{L^2}&\lesssim\left\|\chi_{\intt}(\xi)\mathrm{e}^{-c|\xi|^2t}|\widehat{u}^{k,p_0}_0(\xi)|\right\|_{L^2}+(1+t)^{\frac{1}{2}}\left\|\chi_{\intt}(\xi)\mathrm{e}^{-c|\xi|^2t}\left(|\widehat{u}^{k,p_0}_1(\xi)|+|\widehat{\theta}_0(\xi)|\right)\right\|_{L^2}\\
&\quad+\left\|\chi_{\intt}(\xi)\frac{|\sin(|\xi|t)|}{|\xi|}\mathrm{e}^{-c|\xi|^2t}\left(|\widehat{u}^{k,p_0}_1(\xi)|+|\widehat{\theta}_0(\xi)|\right)\right\|_{L^2}
\end{align*}
for $n=1,2,3$. According to the inspiring works \cite{Ikehata=2014,Ikehata-Onodera=2017}, we know the sharp estimates in the sense of same behaviors for upper bounds and lower bounds as follows:
\begin{align}
\left\|\mathrm{e}^{-c|\xi|^2t}\right\|_{L^2}&\simeq t^{-\frac{n}{4}},\label{optimal-fact-01}\\
\left\|\frac{|\sin(|\xi|t)|}{|\xi|}\mathrm{e}^{-c|\xi|^2t}\right\|_{L^2}&\simeq \ml{A}_n(t),\label{optimal-fact-02}
\end{align}
for $n=1,2,3$ and any $t\gg1$, where the time-dependent function $\ml{A}_n(t)$ was introduced in \eqref{Decay-fun}. Then, with the aid of H\"older's inequality and the Hausdorff-Young inequality, for large-time $t\gg1$ we arrive at
\begin{align*}
\|\chi_{\intt}(\xi)\widehat{u}^{k,p_0}(t,\xi)\|_{L^2}\lesssim t^{-\frac{n}{4}}\|u_0^{k,p_0}\|_{L^1}+\ml{A}_n(t)\|(u_1^{k,p_0},\theta_0)\|_{(L^1)^2}.
\end{align*}
Moreover, exponential decay estimates in Proposition \ref{Prop-Pointwise-large} imply
\begin{align*}
\left\|\big(\chi_{\bdd}(\xi)+\chi_{\extt}(\xi)\big)\widehat{u}^{k,p_0}(t,\xi)\right\|_{L^2}\lesssim\mathrm{e}^{-ct}\left\|\left(u^{k,p_0}_0,u^{k,p_0}_1,\theta_0\right)\right\|_{(L^2)^3}.
\end{align*}
Finally, we apply the Plancherel theorem and $t^{-\frac{n}{4}}\lesssim\ml{A}_n(t)$ for $n=1,2,3$ to complete the derivation of upper bound estimates in \eqref{Estimates:Upper-Bound}.

By the same way as preceding parts of the text and the uses of \eqref{Pointwise-proof-2} and \eqref{Pointwise-proof-3}, one finds
\begin{align}
	&\left\|\chi_{\intt}(D)\left(u^{k,p_0}(t,\cdot)-\ml{G}_0(t,|D|)u_1^{k,p_0}(\cdot)-\ml{G}_{1,k}(t,D)\theta_0(\cdot)\right)\right\|_{L^2}\notag\\
	&\quad\leqslant\left\|\chi_{\intt}(\xi)\left(\widehat{u}^{k,p_0}(t,\xi)-\widehat{J}_0^k(t,\xi)\right)\right\|_{L^2}+\left\|\chi_{\intt}(\xi)\left(\widehat{J}_0^k(t,\xi)-\widehat{\ml{G}}_0(t,|\xi|)\widehat{u}_1^{k,p_0}(\xi)-\widehat{\ml{G}}_{1,k}(t,\xi)\widehat{\theta}_0(\xi)\right)\right\|_{L^2}\notag\\
	&\quad\lesssim\left\|\chi_{\intt}(\xi)\mathrm{e}^{-c|\xi|^2t}\left(|\widehat{u}^{k,p_0}_0(\xi)|+|\widehat{u}^{k,p_0}_1(\xi)|+|\widehat{\theta}_0(\xi)|\right)\right\|_{(L^2)^3}\notag\\
	&\quad\lesssim t^{-\frac{n}{4}}\left\|\left(u^{k,p_0}_0,u^{k,p_0}_1,\theta_0\right)\right\|_{(L^1)^3} \label{eq:28}
\end{align}
for $t\gg1$. Further applications of the triangle inequality and \eqref{optimal-fact-01} indicate
\begin{align}
	&\left\|\chi_{\intt}(D)\left(u^{k,p_0}(t,\cdot)-\ml{G}_0(t,\cdot)P_{u_1^{k,p_0}}-\ml{G}_{1,k}(t,\cdot)P_{\theta_0}\right)\right\|_{L^2}\notag\\
	&\qquad\lesssim t^{-\frac{n}{4}}\left\|\left(u^{k,p_0}_0,u^{k,p_0}_1,\theta_0\right)\right\|_{(L^1)^3}+\left\|\widehat{\ml{G}}_0(t,|\xi|)\left(\widehat{u}_1^{k,p_0}(\xi)-P_{u_1^{k,p_0}}\right)\right\|_{L^2}+\left\|\widehat{\ml{G}}_{1,k}(t,\xi)\left(\widehat{\theta}_0(\xi)-P_{\theta_0}\right)\right\|_{L^2}\notag\\
	&\qquad\lesssim t^{-\frac{n}{4}}\left\|\left(u^{k,p_0}_0,u^{k,p_0}_1,\theta_0\right)\right\|_{(L^1)^3}+\left\||\xi|\widehat{\ml{G}}_0(t,|\xi|)\right\|_{L^2}\|u_1^{k,p_0}\|_{L^{1,1}}+\left\||\xi|\widehat{\ml{G}}_{1,k}(t,\xi)\right\|_{L^2}\|\theta_0\|_{L^{1,1}}\notag\\
		&\qquad\lesssim t^{-\frac{n}{4}}\left\|\left(u^{k,p_0}_0,u^{k,p_0}_1,\theta_0\right)\right\|_{L^1\times(L^{1,1})^2},\label{Sub-tract-01}
\end{align}
where we have employed the next estimate (see, for instance, \cite[Lemma 2.2]{Ikehata=2014}) in the second line of the last chain:
\begin{align*}
|\widehat{f}(\xi)-P_f|\lesssim|\xi|\,\|f\|_{L^{1,1}}.
\end{align*}
Concerning the other frequencies, we still can get exponential decay estimates
\begin{align}\label{Sub-tract-large-01}
&\left\|\big(\chi_{\bdd}(D)+\chi_{\extt}(D)\big)\left(u^{k,p_0}(t,\cdot)-\ml{G}_0(t,\cdot)P_{u_1^{k,p_0}}-\ml{G}_{1,k}(t,\cdot)P_{\theta_0}\right)\right\|_{L^2}\notag\\
&\qquad\lesssim \mathrm{e}^{-ct}\left\|\left(u^{k,p_0}_0,u^{k,p_0}_1,\theta_0\right)\right\|_{(L^2)^3}+\mathrm{e}^{-ct}\left(|P_{u_1^{k,p_0}}|+|P_{\theta_0}|\right)
\end{align}
for any $t\gg1$ due to $|\xi|\geqslant \varepsilon_0$. Observing the trivial fact $|P_f|\leqslant \|f\|_{L^{1,1}}$, we combine the obtained estimates in the above to complete the proof of \eqref{Estimates:refined-Upper-Bound}.

Let us turn to lower bound estimates for the potential solution $u^{p_0}$. The crucial step is to estimate the Fourier multipliers $\widehat{\ml{G}}_0(t,|\xi|)$ and $\widehat{\ml{G}}_{1,k}(t,\xi)$ from the below when $n=1,2,3$. A direct consequence of \eqref{optimal-fact-02} is
\begin{align}\label{lower-bound-01}
\|\chi_{\intt}(\xi)\widehat{\ml{G}}_0(t,|\xi|)\|_{L^2}\gtrsim \ml{A}_n(t)
\end{align}
for $n=1,2,3$ and any $t\gg1$. Due to the situation that $\widehat{\ml{G}}_{1,k}(t,\xi)$ is not radial symmetric with respect to $\xi$, we will frequently use the following chain for treating the multipliers $\ml{M}(|\xi|)$ containing Riesz transform:
\begin{align}\label{Over-come-Riesz}
	\|\widehat{\ml{R}_k\ml{M}}(|\xi|)\|_{L^2}^2=\int_{\mb{R}^n}|\widehat{\ml{M}}(|\xi|)|^2\frac{\xi_k^2}{|\xi|^2}\mathrm{d}\xi&=\int_0^{\infty}|\widehat{\ml{M}}(r)|^2r^{n-1}\mathrm{d}r\int_{\mb{S}^{n-1}}\omega_k^2\mathrm{d}\sigma_{\omega}\notag\\
	&=\frac{|\mb{S}^{n-1}|}{n}\int_0^{\infty}|\widehat{\ml{M}}(r)|^2r^{n-1}\mathrm{d}r,
\end{align}
where we used polar coordinates with the $(n-1)$-dimensional measure of the unit sphere $\mb{S}^{n-1}$. As a consequence, we know from \eqref{Over-come-Riesz} that
\begin{align}\label{C-01}
\left\|\frac{1}{i}\widehat{\ml{G}}_{1,k}(t,\xi)\right\|_{L^2}\gtrsim \left\|\frac{1}{|\xi|}\left(\mathrm{e}^{-\frac{\kappa b^2}{b^2+\gamma_1\gamma_2}|\xi|^2t}-\cos\left(\sqrt{b^2+\gamma_1\gamma_2}|\xi|t\right)\mathrm{e}^{-\frac{\kappa\gamma_1\gamma_2}{2(b^2+\gamma_1\gamma_2)}|\xi|^2t}\right)\right\|_{L^2},
\end{align}
since the multiplier $\frac{1}{i}\widehat{\ml{G}}_{1,k}(t,\xi)$ is a real function. Indeed, due to the singularity $|\xi|^{-1}$ for $|\xi|\to 0$, the lower bound estimates for it are quite delicate depending on dimensions.
 \begin{prop}\label{Prop-Fourier-mul-Takeda}
 	Let us take any $\beta_j>0$ for $j=0,1,2$. Concerning the Fourier multiplier
 	\begin{align*}
 	\ml{M}_n(t,|\xi|):=\frac{1}{|\xi|}\left(\mathrm{e}^{-\beta_0|\xi|^2t}-\cos(\beta_1|\xi|t)\mathrm{e}^{-\beta_2|\xi|^2t}\right),
 	\end{align*}
 the following optimal estimates hold:
 \begin{align}\label{Upp-1}
 	\|\ml{M}_n(t,|\xi|)\|_{L^2}\simeq\ml{A}_n(t)
 \end{align}
for any $n=1,2,3$ and $t\gg1$.
 \end{prop}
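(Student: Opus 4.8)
The plan is to reduce the estimate for $\ml{M}_n(t,|\xi|)$ to the two already-established facts \eqref{optimal-fact-01} and \eqref{optimal-fact-02} by the algebraic manipulation
\begin{align*}
\mathrm{e}^{-\beta_0|\xi|^2t}-\cos(\beta_1|\xi|t)\mathrm{e}^{-\beta_2|\xi|^2t}=\big(\mathrm{e}^{-\beta_0|\xi|^2t}-\mathrm{e}^{-\beta_2|\xi|^2t}\big)+\big(1-\cos(\beta_1|\xi|t)\big)\mathrm{e}^{-\beta_2|\xi|^2t}.
\end{align*}
Dividing by $|\xi|$, the first bracket produces $|\xi|^{-1}\big(\mathrm{e}^{-\beta_0|\xi|^2t}-\mathrm{e}^{-\beta_2|\xi|^2t}\big)$, which, by writing $\mathrm{e}^{-\beta_0|\xi|^2t}-\mathrm{e}^{-\beta_2|\xi|^2t}=(\beta_2-\beta_0)|\xi|^2t\int_0^1\mathrm{e}^{-(\beta_0 s+\beta_2(1-s))|\xi|^2t}\mathrm{d}s$, is bounded by $C|\xi|t\,\mathrm{e}^{-c|\xi|^2t}$; hence its $L^2$ norm is $\lesssim t^{1/2}\cdot t^{-(n+2)/4}=t^{-n/4}$ after a scaling, which is $o(\ml{A}_n(t))$ for every $n=1,2,3$. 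The second bracket, using $1-\cos(\beta_1|\xi|t)=2\sin^2(\tfrac12\beta_1|\xi|t)$, gives $2|\xi|^{-1}\sin^2(\tfrac12\beta_1|\xi|t)\mathrm{e}^{-\beta_2|\xi|^2t}$, which is pointwise comparable to $|\xi|^{-1}|\sin(\beta_1|\xi|t)||\sin(\tfrac12\beta_1|\xi|t)|\mathrm{e}^{-\beta_2|\xi|^2t}$ and bounded above by $|\xi|^{-1}|\sin(c|\xi|t)|\mathrm{e}^{-\beta_2|\xi|^2t}$; so its $L^2$ norm is $\lesssim\ml{A}_n(t)$ by \eqref{optimal-fact-02}. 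Combining the two brackets via the triangle inequality yields the upper bound $\|\ml{M}_n(t,|\xi|)\|_{L^2}\lesssim\ml{A}_n(t)$.

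For the lower bound I would again split off the dominant piece. Write $\ml{M}_n=2|\xi|^{-1}\sin^2(\tfrac12\beta_1|\xi|t)\mathrm{e}^{-\beta_2|\xi|^2t}+|\xi|^{-1}\big(\mathrm{e}^{-\beta_0|\xi|^2t}-\mathrm{e}^{-\beta_2|\xi|^2t}\big)$; by the reverse triangle inequality and the upper bound $\lesssim t^{-n/4}=o(\ml{A}_n(t))$ already obtained for the second term, it suffices to bound $\|\,|\xi|^{-1}\sin^2(\tfrac12\beta_1|\xi|t)\mathrm{e}^{-\beta_2|\xi|^2t}\|_{L^2}$ from below by $c\,\ml{A}_n(t)$. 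For this I would restrict the integral to the frequency band $\ml{Z}_{\intt}(\varepsilon_0)\cap\{|\xi|\sim t^{-1/2}\}$ augmented, for $n=1$, by a band of the form $\{|\xi|\sim r_j:=(2j+\tfrac12)\pi/(\beta_1 t)\}$ where $\sin^2(\tfrac12\beta_1|\xi|t)$ is bounded below; on such bands $\mathrm{e}^{-\beta_2|\xi|^2t}\gtrsim1$ and $|\xi|^{-2}\sim t$ or $\sim r_j^{-2}$, and summing the resulting contributions reproduces exactly the factor $t$ ($n=1$), $\ln t$ ($n=2$), $t^{-1/2}$ ($n=3$) appearing in $\ml{A}_n(t)^2$. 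Alternatively, one can avoid the band analysis by using $\sin^2(\tfrac12\beta_1|\xi|t)=\tfrac12\big(1-\cos(\beta_1|\xi|t)\big)$, so that $\|\,\cdot\,\|_{L^2}^2$ becomes a combination of $\|\,|\xi|^{-1}\mathrm{e}^{-\beta_2|\xi|^2t}\|_{L^2}^2$-type integrals — which by the same homogeneity computation as in \cite{Ikehata=2014} equal a constant times $\ml{A}_n(t)^2$ plus lower-order oscillatory remainders controlled by the Riemann–Lebesgue lemma (or by integration by parts in $|\xi|$) — and then extract the leading term. Either route gives $\|\ml{M}_n(t,|\xi|)\|_{L^2}\gtrsim\ml{A}_n(t)$, completing the proof of \eqref{Upp-1}.

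The main obstacle is the lower bound in the critical dimensions $n=1$ and $n=2$, where $\ml{A}_n(t)$ grows: one must show that the oscillating factor $\sin^2(\tfrac12\beta_1|\xi|t)$ does not, after integration against the weight $|\xi|^{n-3}\mathrm{e}^{-2\beta_2|\xi|^2t}$, cancel the logarithmic ($n=2$) or linear ($n=1$) divergence near $|\xi|=0$. The clean way to handle this is to isolate the non-oscillatory part $\tfrac12|\xi|^{-1}\mathrm{e}^{-\beta_2|\xi|^2t}$ of $\sin^2$-expansion and prove that the remaining genuinely oscillatory integral $\int|\xi|^{n-3}\cos(\beta_1|\xi|t)\mathrm{e}^{-2\beta_2|\xi|^2t}\mathrm{d}\xi$ is of strictly smaller order — $\ml{O}(1)$ when $n=2$ and $\ml{O}(t^{1/2})=o(t)$ when $n=1$ — which follows by substituting $|\xi|=\rho/\sqrt t$ and applying stationary-phase/Riemann–Lebesgue estimates to the resulting $t$-independent profile; one also needs the uniform positivity of $\mathrm{e}^{-2\beta_2|\xi|^2t}$ on $|\xi|\lesssim t^{-1/2}$, which is immediate. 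Once the divergent part is shown to survive, the matching upper bound from the first paragraph pins down $\|\ml{M}_n(t,|\xi|)\|_{L^2}\simeq\ml{A}_n(t)$.
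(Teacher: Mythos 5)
Your upper-bound argument is sound and is essentially the paper's (split off $2|\xi|^{-1}\sin^2(\tfrac{\beta_1}{2}|\xi|t)\mathrm{e}^{-\beta_2|\xi|^2t}$ plus the difference of exponentials and invoke \eqref{optimal-fact-02}), but your quantitative claim for the exponential-difference term is wrong, and this breaks your lower bound when $n=3$. From $|\mathrm{e}^{-\beta_0|\xi|^2t}-\mathrm{e}^{-\beta_2|\xi|^2t}|\lesssim |\xi|^2t\,\mathrm{e}^{-c|\xi|^2t}$ one gets $\|\,|\xi|^{-1}(\mathrm{e}^{-\beta_0|\xi|^2t}-\mathrm{e}^{-\beta_2|\xi|^2t})\|_{L^2}\lesssim t\,\|\,|\xi|\mathrm{e}^{-c|\xi|^2t}\|_{L^2}\simeq t^{\frac{1}{2}-\frac{n}{4}}$, not $t^{-\frac{n}{4}}$; moreover the scaling $\xi=\eta/\sqrt{t}$ shows that $t^{\frac{1}{2}-\frac{n}{4}}$ is the exact order whenever $\beta_0\neq\beta_2$, so no sharper bound is available. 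For $n=1,2$ this is still $o(\ml{A}_n(t))$ and your reverse-triangle scheme survives, but for $n=3$ it equals $t^{-\frac14}=\ml{A}_3(t)$: subtracting it from the $\sin^2$-piece (also of order $t^{-\frac14}$) yields $c_1t^{-\frac14}-c_2t^{-\frac14}$, which gives nothing unless you compare the constants $c_1,c_2$, and these depend on $\beta_0,\beta_1,\beta_2$ with no favourable ordering in general. The paper avoids this by never separating the two pieces for $n=3$: it squares the whole multiplier in scaled polar coordinates, so that the two non-oscillatory squares $\mathrm{e}^{-2\beta_0\sigma^2}$ and $\tfrac12\mathrm{e}^{-2\beta_2\sigma^2}$ both enter with a positive sign, while every cross term carries a factor $\cos(\beta_1\sqrt{t}\sigma)$ or $\cos(2\beta_1\sqrt{t}\sigma)$ and vanishes by Riemann--Lebesgue; you need this (or some substitute) to close the three-dimensional case.

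Two further points for $n=1,2$. As literally stated, restricting to the single band $|\xi|\sim t^{-\frac12}$ produces only an $O(1)$ contribution when $n=2$ (there $\int_{r\sim t^{-1/2}}r^{-1}\mathrm{d}r\simeq 1$), not $\ln t$; the logarithm comes from integrating over the whole range $t^{-1}\lesssim|\xi|\lesssim t^{-\frac12}$ (equivalently $\sigma\in[t^{-\frac12},1]$ after scaling, which is exactly the paper's domain shrinking) or from summing roughly $\sqrt{t}$ of your bands $r_j$, not one of them. Also, your ``clean'' alternative --- isolating the non-oscillatory part of the $\sin^2$-expansion and estimating $\int|\xi|^{n-3}\cos(\beta_1|\xi|t)\mathrm{e}^{-2\beta_2|\xi|^2t}\mathrm{d}\xi$ by Riemann--Lebesgue or stationary phase --- is not well defined as written for $n\leqslant2$: the weight $|\xi|^{n-3}$ is not integrable at the origin, so each separated piece is a divergent integral, and only their combination (in which $1-\cos(\beta_1|\xi|t)=O(|\xi|^2t^2)$ near $\xi=0$) is finite. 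You must first cut the region $|\xi|\lesssim t^{-1}$ (for $n=1$) or $|\xi|\lesssim t^{-\frac12}$ (for $n=2$) away before splitting and integrating by parts, which is precisely the step the paper performs. With that restriction inserted, your $n=1$ band argument coincides with the paper's choice of the interval $[\alpha_0t^{-1},2\alpha_0t^{-1}]$ and, together with the corrected remainder bound $t^{\frac14}=o(\sqrt{t})$, does close that case.
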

\begin{proof}
	Thanks to exponential decay estimates when $|\xi|\geqslant\varepsilon_0$, the upper bound estimates \eqref{Upp-1} have been finished actually at the beginning of this subsection due to the trick \eqref{trick-01}, boundedness of since functions and \eqref{optimal-fact-02}. For these reasons, we just need to concentrate on \eqref{Upp-1} from the below side, which is the most challenging part in the optimal estimates. To do so, we will separate our discussion into three parts in terms of dimensions.
	\medskip
	
	\noindent\underline{Lower-dimensional case: $n=1$.} With the same philosophy as those in \eqref{trick-01}, in general we may rewrite the multiplier by
	\begin{align*}
		\ml{M}_n(t,|\xi|)=\frac{2}{|\xi|}\left|\sin\left(\tfrac{\beta_1}{2}|\xi|t\right)\right|^2\mathrm{e}^{-\beta_2|\xi|^2t}+(\beta_2-\beta_0)|\xi|t\mathrm{e}^{-\beta_2|\xi|^2t}\int_0^1\mathrm{e}^{(\beta_2-\beta_0)|\xi|^2ts}\mathrm{d}s.
	\end{align*}
Note that the last term in the above will vanish when $\beta_0=\beta_2$. Hence, we apply $2|f-g|^2\geqslant |f|^2-2|g|^2$ to deduce
\begin{align*}
\|\ml{M}_n(t,|\xi|)\|_{L^2}^2&\geqslant\|\chi_{\intt}(\xi)\ml{M}_n(t,|\xi|)\|_{L^2}^2\\
&\geqslant\int_{|\xi|\leqslant\varepsilon_0}\frac{2}{|\xi|^2}\left|\sin\left(\tfrac{\beta_1}{2}|\xi|t\right)\right|^4\mathrm{e}^{-2\beta_2|\xi|^2t}\mathrm{d}\xi-Ct^2\int_{|\xi|\leqslant\varepsilon_0}|\xi|^2\mathrm{e}^{-c|\xi|^2t}\mathrm{d}\xi.
\end{align*}
Let us choose a constant $\alpha_0$ such that $0<\alpha_0<\pi/\beta_1$. That is to say
\begin{align*}
\left|\sin\left(\tfrac{\beta_1}{2}|\xi|t\right)\right|\geqslant C>0\ \ \mbox{for any}\ \ |\xi|\in[\alpha_0t^{-1},2\alpha_0t^{-1}].
\end{align*}
We take large-time $t\gg1$ so that $2\alpha_0t^{-1}<\varepsilon_0$ always holds. Thus, we shrank the domain of the first integral only and derive
\begin{align*}
\|\ml{M}_n(t,|\xi|)\|_{L^2}^2&\geqslant\int_{\alpha_0t^{-1}\leqslant|\xi|\leqslant2\alpha_0t^{-1}}\frac{2}{|\xi|^2}\left|\sin\left(\tfrac{\beta_1}{2}|\xi|t\right)\right|^4\mathrm{e}^{-2\beta_2|\xi|^2t}\mathrm{d}\xi-Ct\int_{|\xi|\leqslant\varepsilon_0}\mathrm{e}^{-c|\xi|^2t}\mathrm{d}\xi\\
&\gtrsim t^2\int_{\alpha_0 t^{-1}\leqslant|\xi|\leqslant2\alpha_0t^{-1}}\mathrm{e}^{-2\beta_2|\xi|^2t}\mathrm{d}\xi-Ct^{1-\frac{n}{2}}\\
&\gtrsim t^{2-n}\mathrm{e}^{-8\alpha_0^2\beta_2t^{-1}}-t^{1-\frac{n}{2}}\\
&\gtrsim t^{2-n}
\end{align*}
for $n=1,2$ and $t\gg1$, where we used \eqref{optimal-fact-01}. Nevertheless, considering $n=2$, the last lower bound is just a constant which seems to be not sharp. For this reason, we will employ another idea in such critical-dimension.\medskip
	
	\noindent\underline{Critical-dimensional case: $n=2$.} By considering polar coordinates and shrinking the resultant domain into $[t^{-\frac{1}{2}},1]$ for $t\gg1$, one may see
	\begin{align}\label{Take-1}
	&\|\ml{M}_2(t,|\xi|)\|_{L^2}^2\gtrsim\int_0^{\infty}\left(\mathrm{e}^{-\beta_0r^2t}-\cos(\beta_1rt)\mathrm{e}^{-\beta_2r^2t}\right)^2 r^{-1}\mathrm{d}r\notag\\
	&\quad \gtrsim\int_{t^{-\frac{1}{2}}}^1\left(\mathrm{e}^{-2\beta_0\sigma^2}-2\cos(\beta_1\sqrt{t}\sigma)\mathrm{e}^{-(\beta_0+\beta_2)\sigma^2}+|\cos(\beta_1\sqrt{t}\sigma)|^2\mathrm{e}^{-2\beta_2\sigma^2}\right)\sigma^{-1}\mathrm{d}\sigma\notag\\
	&\quad \gtrsim\int_{t^{-\frac{1}{2}}}^1\left[\left(\mathrm{e}^{-2\beta_0\sigma^2}+\frac{1}{2}\mathrm{e}^{-2\beta_2\sigma^2}\right)-2\cos(\beta_1\sqrt{t}\sigma)\mathrm{e}^{-(\beta_0+\beta_2)\sigma^2}+\frac{1}{2}\cos(2\beta_1\sqrt{t}\sigma)\mathrm{e}^{-2\beta_2\sigma^2}\right]\sigma^{-1}\mathrm{d}\sigma,
	\end{align}
where we used $2\cos^2z=1+\cos(2z)$ and $\sigma=\sqrt{t}r$ to be a new ansatz. Indeed, we notice
\begin{align*}
\int_{t^{-\frac{1}{2}}}^1\left(\mathrm{e}^{-2\beta_0\sigma^2}+\frac{1}{2}\mathrm{e}^{-2\beta_2\sigma^2}\right)\sigma^{-1}\mathrm{d}\sigma\gtrsim\int_{t^{-\frac{1}{2}}}^1\sigma^{-1}\mathrm{d}\sigma=\frac{1}{2}\ln t
\end{align*}
for $t\gg1$. For another, the application of integration by parts hints
\begin{align*}
-2\int_{t^{-\frac{1}{2}}}^1\frac{\cos(\beta_1\sqrt{t}\sigma)}{\sigma}\mathrm{e}^{-(\beta_0+\beta_2)\sigma^2}\mathrm{d}\sigma&=-\frac{2}{\beta_1\sqrt{t}}\left(\frac{\sin(\beta_1\sqrt{t}\sigma)}{\sigma}\mathrm{e}^{-(\beta_0+\beta_2)\sigma^2}\right)\Big|_{\sigma=t^{-\frac{1}{2}}}^{\sigma=1}\\
&\quad+\frac{2}{\beta_1\sqrt{t}}\int_{t^{-\frac{1}{2}}}^1\sin(\beta_1\sqrt{t}\sigma)\mathrm{e}^{-(\beta_0+\beta_2)\sigma^2}\left(-2(\beta_0+\beta_2)-\sigma^{-2}\right)\mathrm{d}\sigma\\
&=-\frac{2}{\beta_1\sqrt{t}}\sin(\beta_1\sqrt{t})\mathrm{e}^{-(\beta_0+\beta_2)}+\frac{2}{\beta_1}\sin(\beta_1)\mathrm{e}^{-(\beta_0+\beta_2)t^{-1}}\\
&\quad-\frac{2}{\beta_1\sqrt{t}}\int_{t^{-\frac{1}{2}}}^1\sin(\beta_1\sqrt{t}\sigma)\mathrm{e}^{-(\beta_0+\beta_2)\sigma^2}\left(2(\beta_0+\beta_2)+\sigma^{-2}\right)\mathrm{d}\sigma.
\end{align*}
It means
\begin{align*}
\left|-2\int_{t^{-\frac{1}{2}}}^1\frac{\cos(\beta_1\sqrt{t}\sigma)}{\sigma}\mathrm{e}^{-(\beta_0+\beta_2)\sigma^2}\mathrm{d}\sigma\right|\lesssim t^{-\frac{1}{2}}+1+t^{-\frac{1}{2}}\int_{t^{-\frac{1}{2}}}^1\mathrm{d}\sigma+t^{-\frac{1}{2}}\int_{t^{-\frac{1}{2}}}^1\sigma^{-2}\mathrm{d}\sigma\lesssim 1
\end{align*}
for $t\gg1$. Similarly, concerning $t\gg1$, we also arrive at
\begin{align*}
\left|\frac{1}{2}	\int_{t^{-\frac{1}{2}}}^1\frac{\cos(2\beta_1\sqrt{t}\sigma)}{\sigma}\mathrm{e}^{-2\beta_2\sigma^2}\mathrm{d}\sigma\right|\lesssim 1.
\end{align*}
Summarizing the last estimates, we say
\begin{align*}
	\|\ml{M}_2(t,|\xi|)\|_{L^2}^2\gtrsim \ln t-C\gtrsim \ln t
\end{align*}
for large-time $t\gg1$.\medskip

	\noindent\underline{Higher-dimensional case: $n=3$.} With analogous manner to \eqref{Take-1}, one can get
\begin{align*}
	\|\ml{M}_3(t,|\xi|)\|_{L^2}^2 &\gtrsim t^{-\frac{1}{2}}\int_{0}^{\infty}\left(\mathrm{e}^{-2\beta_0\sigma^2}+\frac{1}{2}\mathrm{e}^{-2\beta_2\sigma^2}\right)\mathrm{d}\sigma\notag\\
	&\quad+t^{-\frac{1}{2}}\int_{0}^{\infty}\left(-2\cos(\beta_1\sqrt{t}\sigma)\mathrm{e}^{-(\beta_0+\beta_2)\sigma^2}+\frac{1}{2}\cos(2\beta_1\sqrt{t}\sigma)\mathrm{e}^{-2\beta_2\sigma^2}\right)\mathrm{d}\sigma.
\end{align*}
On the other hand, the Riemann-Lebesgue theorem states that
\begin{align*}
	-2\int_{0}^{\infty}\cos(\beta_1\sqrt{t}\sigma)\mathrm{e}^{-(\beta_0+\beta_2)\sigma^2}\mathrm{d}\sigma+\frac{1}{2}\int_0^{\infty}\cos(2\beta_1\sqrt{t}\sigma)\mathrm{e}^{-2\beta_2\sigma^2}\mathrm{d}\sigma=o(1)
\end{align*}
as $t\gg1$, which yields
\begin{align*}
\|\ml{M}_3(t,|\xi|)\|_{L^2}^2\gtrsim t^{-\frac{1}{2}}+o(t^{-\frac{1}{2}})\gtrsim t^{-\frac{1}{2}}
\end{align*}
for $t\gg1$. The proof is totally complete.
\end{proof}
\begin{remark}
By the similar procedure as the one for $n=3$, the optimal estimates also hold for large-time such that $\|\ml{M}_n(t,|\xi|)\|_{L^2}\simeq t^{\frac{1}{2}-\frac{n}{4}}$ for any $n\geqslant 4$. But our goal is to study the thermoelasticity in physical dimensions $n=1,2,3$ only.
\end{remark}
With the aid of the recombination
\begin{align*}
	 u^{k,p_0}(t,x)-\ml{G}_0(t,x)P_{u_1^{k,p_0}}-\ml{G}_{1,k}(t,x)P_{\theta_0} &=\left( u^{k,p_0}(t,x)-\ml{G}_0(t,|D|)u_1^{k,p_0}-\ml{G}_{1,k}(t,D)\theta_0 \right) \\
	& \quad\  + \left( \ml{G}_0(t,|D|)u_1^{k,p_0}-\ml{G}_0(t,x)P_{u_1^{k,p_0}} \right)\\
	&\quad\  +\big( \ml{G}_{1,k}(t,D)\theta_0-\ml{G}_{1,k}(t,x)P_{\theta_0} \big),
\end{align*}
observing that 
\begin{align*}
	\left\|\big(1-\chi_{\intt}(D)\big)
	\left(u^{k,p_0}(t,\cdot)-\ml{G}_0(t,|D|)u_1^{k,p_0}(\cdot)-\ml{G}_{1,k}(t,D)\theta_0(\cdot)\right)\right\|_{L^2}\lesssim \mathrm{e}^{-ct}\left\|\left(u^{k,p_0}_0,u^{k,p_0}_1,\theta_0\right)\right\|_{(L^2)^3}
\end{align*}
and \eqref{eq:28}, we have 
\begin{align}
	&\left\| u^{k,p_0}(t,\cdot)-\ml{G}_0(t,\cdot)P_{u_1^{k,p_0}}-\ml{G}_{1,k}(t,\cdot)P_{\theta_0} \right\|_{L^2}\notag\\
	&\qquad\lesssim t^{-\frac{n}{4}}\left\|\left(u^{k,p_0}_0,u^{k,p_0}_1,\theta_0\right)\right\|_{(L^2 \cap L^{1})^3}+\left\|
	\ml{G}_0(t,|D|)u_1^{k,p_0}(\cdot)-\ml{G}_0(t,\cdot)P_{u_1^{k,p_0}}
	\right\|_{L^2} \notag \\
	&\qquad \quad +
	\left\|
	\ml{G}_{1,k}(t,D)\theta_0(\cdot)-\ml{G}_{1,k}(t,\cdot)P_{\theta_0} 
	\right\|_{L^2} \label{eq:36},
\end{align}
by some minor modifications of the derivation of \eqref{Sub-tract-01}.  On the other hand, just applying the same argument for $\ml{E}_2(t,x)$ below, particularly \eqref{Eq-01}, to the decomposition 
\begin{align*}
	 \ml{G}_0(t,|D|)u_1^{k,p_0}(x)-\ml{G}_0(t,x)P_{u_1^{k,p_0}} & =
	\int_{|y|\leqslant t^{\frac{1}{4}}}\big(\ml{G}_0(t,x-y)-\ml{G}_0(t,x)\big)u_1^{k,p_0}(y)\mathrm{d}y\\
	&\quad+\int_{|y|\geqslant t^{\frac{1}{4}}}\ml{G}_0(t,x-y)u_1^{k,p_0}(y)\mathrm{d}y-\int_{|y|\geqslant t^{\frac{1}{4}}}
	\ml{G}_0(t,x)u_1^{k,p_0}(y)\mathrm{d}y,
\end{align*}
we easily obtain 
\begin{align}
	\left\|
	 \ml{G}_0(t,|D|)u_1^{k,p_0}(\cdot)-\ml{G}_0(t,\cdot)P_{u_1^{k,p_0}}
	\right\|_{L^2}=o\big(\ml{A}_n(t)\big) \label{eq:37}
\end{align}
as $t \to \infty$.
By the same way we also have
\begin{align}
\left\|
\ml{G}_{1,k}(t,D)\theta_0(\cdot)-\ml{G}_{1,k}(t,\cdot)P_{\theta_0} 
\right\|_{L^2}=o\big(\ml{A}_n(t)\big) \label{eq:38}
\end{align}
as $t \to \infty$.
Thus, we arrive at the estimate  
\begin{align}
	\left\| u^{k,p_0}(t,\cdot)-\ml{G}_0(t,\cdot)P_{u_1^{k,p_0}}-\ml{G}_{1,k}(t,\cdot)P_{\theta_0} \right\|_{L^2}
	= o\big(\ml{A}_n(t)\big)  \label{eq:39}
\end{align}
for $t \gg 1$ and $u^{k,p_0}_0,u^{k,p_0}_1,\theta_0\in L^2\cap L^{1}$ by \eqref{eq:36}-\eqref{eq:38}.

 By directly applying Proposition \ref{Prop-Fourier-mul-Takeda} and \eqref{C-01} associated with corresponding constants
 \begin{align}\label{beta012}
 \beta_0=\frac{\kappa b^2}{b^2+\gamma_1\gamma_2},\ \ \beta_1=\sqrt{b^2+\gamma_1\gamma_2},\ \ \beta_2=\frac{\kappa\gamma_1\gamma_2}{2(b^2+\gamma_1\gamma_2)},
 \end{align}
  we claim for large-time that
\begin{align}\label{lower-bound-02}
\left\|\frac{1}{i}\widehat{\ml{G}}_{1,k}(t,\xi)\right\|_{L^2}\gtrsim \ml{A}_n(t).
\end{align}
Due to the real value of $\frac{1}{i}\widehat{\ml{G}}_{1,k}(t,\xi)$, by using \eqref{lower-bound-01}, \eqref{lower-bound-02}, one claims
\begin{align*}
\|\varphi^k(t,\cdot)\|_{L^2}^2&=\left\|\ml{G}_0(t,\cdot)P_{u_1^{k,p_0}}+i\frac{1}{i}\ml{G}_{1,k}(t,\cdot)P_{\theta_0}\right\|_{L^2}^2\\
&=\|\ml{G}_0(t,\cdot)\|_{L^2}^2|P_{u_1^{k,p_0}}|^2+\left\|\frac{1}{i}\ml{G}_{1,k}(t,\cdot)\right\|_{L^2}^2|P_{\theta_0}|^2\\
&\gtrsim\big(\ml{A}_n(t)\big)^2\left(|P_{u_1^{k,p_0}}|^2+|P_{\theta_0}|^2\right)
\end{align*}
for $t\gg1$. Recalling \eqref{eq:39} and Minkowski's inequality, for $t\gg1$ we can get
\begin{align*}
	\|u^{k,p_0}(t,\cdot)\|_{L^2}&\geqslant \|\varphi^k(t,\cdot)\|_{L^2} -\left\|u^{k,p_0}(t,\cdot)-\ml{G}_0(t,\cdot)P_{u_1^{k,p_0}}-\ml{G}_{1,k}(t,\cdot)P_{\theta_0}\right\|_{L^2}\\
	&\gtrsim\ml{A}_n(t)\sqrt{|P_{u_1^{k,p_0}}|^2+|P_{\theta_0}|^2}
	-t^{-\frac{n}{4}}\left\|\left(u_0^{k,p_0},u_1^{k,p_0},\theta_0\right)\right\|_{(L^{2} \cap L^{1})^3} -o\big(\ml{A}_n(t)\big)\\
	&\gtrsim\ml{A}_n(t)\sqrt{|P_{u_1^{k,p_0}}|^2+|P_{\theta_0}|^2}.
\end{align*}
It immediately completes our proof of \eqref{Estimates:Upper-Bound} from the below side.
 
\subsection{Second-order asymptotic profiles for the potential solution}\label{Sub-Second-Profile}
Firstly, let us explain shortly the idea of construction for the second-order profile $\psi^k=\psi^k(t,x)$ since the first-order profile (leading term) $\varphi^k=\varphi^k(t,x)$ has been built by extracting the dominant terms. To get the second-order profile, we not only derive the second-order expansions of solution, but also need to find the worst term with the help of Taylor's expansions.  Recalling the profiles $\varphi^k(t,x)$ and $\psi^k(t,x)$, we apply a suitable decomposition as follows:
\begin{align*}
u^{k,p_0}(t,x)-\varphi^k(t,x)-\psi^k(t,x)=\sum\limits_{j=1,\dots,6}\ml{E}_j(t,x),
\end{align*}
where these error functions are defined by
\begin{align*}
\ml{E}_1(t,x)&:=u^{k,p_0}(t,x)-\ml{G}_0(t,|D|)u_1^{k,p_0}-\ml{G}_{1,k}(t,D)\theta_0-\ml{G}_2(t,|D|)u_0^{k,p_0}\\
&\quad\ -\big(\ml{H}_0(t,|D|)+\ml{G}_3(t,|D|)\big)u_1^{k,p_0}-\big(\ml{H}_{1,k}(t,D)+\ml{G}_{4,k}(t,D)\big)\theta_0,
\end{align*}
and
\begin{align*}
\ml{E}_2(t,x)&:=\ml{G}_0(t,|D|)u_1^{k,p_0}-\ml{G}_0(t,x)P_{u_1^{k,p_0}}-\nabla\ml{G}_0(t,x)\circ M_{u_1^{k,p_0}},\\
\ml{E}_3(t,x)&:=\ml{G}_{1,k}(t,D)\theta_0-\ml{G}_{1,k}(t,x)P_{\theta_0}-\nabla\ml{G}_{1,k}(t,x)\circ M_{\theta_0},
\end{align*}
as well as
\begin{align*}
\ml{E}_4(t,x)&:=\ml{G}_2(t,|D|)u_0^{k,p_0}-\ml{G}_2(t,x)P_{u_0^{k,p_0}},\\
\ml{E}_5(t,x)&:=\big(\ml{H}_0(t,|D|)+\ml{G}_3(t,|D|)\big)u_1^{k,p_0}-\big(\ml{H}_0(t,x)+\ml{G}_3(t,x)\big)P_{u_1^{k,p_0}},\\
\ml{E}_6(t,x)&:=\big(\ml{H}_{1,k}(t,D)+\ml{G}_{4,k}(t,D)\big)\theta_0-\big(\ml{H}_{1,k}(t,x)+\ml{G}_{4,k}(t,x)\big)P_{\theta_0}.
\end{align*}

Let us review the final estimate in Proposition \ref{Prop-Pointwise}. The application of the triangle inequality immediately shows
\begin{align*}
&\chi_{\intt}(\xi)\left|\widehat{u}^{k,p_0}-\widehat{\ml{G}}_2\widehat{u}_0^{k,p_0}-\left(\widehat{\ml{G}}_0+\widehat{\ml{H}}_0+\widehat{\ml{G}}_3\right)\widehat{u}_1^{k,p_0}-\left(\widehat{\ml{G}}_{1,k}+\widehat{\ml{H}}_{1,k}+\widehat{\ml{G}}_{4,k}\right)\widehat{\theta}_0\right|\\
&\qquad\lesssim\chi_{\intt}(\xi)|\xi|\mathrm{e}^{-c|\xi|^2t}\left(|\widehat{u}_0^{k,p_0}|+|\widehat{u}_1^{k,p_0}|+|\widehat{\theta}_0|\right),
\end{align*}
where Propositions \ref{Prop-Improvement-J0} and \ref{Prop-Improvement-J1} were used. Thus, we split the goal into two frequencies parts to get
\begin{align*}
\|\ml{E}_1(t,\cdot)\|_{L^2}&\leqslant \|\chi_{\intt}(D)\ml{E}_1(t,\cdot)\|_{L^2}+\left\|\big(1-\chi_{\intt}(D)\big)\ml{E}_1(t,\cdot)\right\|_{L^2}\\
&\lesssim \left\|\chi_{\intt}(\xi)|\xi|\mathrm{e}^{-c|\xi|^2t}\left(|\widehat{u}_0^{k,p_0}|+|\widehat{u}_1^{k,p_0}|+|\widehat{\theta}_0|\right)\right\|_{L^2}+\mathrm{e}^{-ct}\left\|\left(u_0^{k,p_0},u_1^{k,p_0},\theta_0\right)\right\|_{(L^2)^3}\\
&\lesssim t^{-\frac{1}{2}-\frac{n}{4}}\left\|\left(u_0^{k,p_0},u_1^{k,p_0},\theta_0\right)\right\|_{(L^2\cap L^1)^3}
\end{align*}
for $t\gg1$. To deal with the second error term, we may decompose it into three functions as follows:
\begin{align*}
\ml{E}_2(t,x)&=\int_{|y|\leqslant t^{\alpha_1}}\big(\ml{G}_0(t,x-y)-\ml{G}_0(t,x)-y\circ\nabla\ml{G}_0(t,x)\big)u_1^{k,p_0}(y)\mathrm{d}y\\
&\quad+\int_{|y|\geqslant t^{\alpha_1}}\big(\ml{G}_0(t,x-y)-\ml{G}_0(t,x)\big)u_1^{k,p_0}(y)\mathrm{d}y+\int_{|y|\geqslant t^{\alpha_1}}(-y)\circ\nabla\ml{G}_0(t,x)u_1^{k,p_0}(y)\mathrm{d}y\\
&=:\ml{E}_{2,1}(t,x)+\ml{E}_{2,2}(t,x)+\ml{E}_{2,3}(t,x)
\end{align*}
carrying a small positive constant $\alpha_1\ll 1$. The application of Taylor's expansions implies
\begin{align}
	|\ml{G}_0(t,x-y)-\ml{G}_0(t,x)|&\lesssim|y|\,|\nabla\ml{G}_0(t,x-\sigma_0y)|,\label{Eq-01}\\
	|\ml{G}_0(t,x-y)-\ml{G}_0(t,x)-y\circ\nabla\ml{G}_0(t,x)|&\lesssim|y|^2|\nabla^2\ml{G}_0(t,x-\sigma_1y)|,\notag
\end{align}
with some constants $\sigma_0,\sigma_1\in(0,1)$. So, it implies
\begin{align*}
	\|\ml{E}_{2,1}(t,\cdot)\|_{L^2}&\lesssim t^{2\alpha_1}\|\,|\xi|^2\widehat{\ml{G}}_0(t,|\xi|)\|_{L^2}\|u_1^{k,p_0}\|_{L^1}\lesssim t^{2\alpha_1-\frac{1}{2}-\frac{n}{4}}\|u_1^{k,p_0}\|_{L^1},\\
	\|\ml{E}_{2,2}(t,\cdot)\|_{L^2}+\|\ml{E}_{2,3}(t,\cdot)\|_{L^2}&\lesssim\|\,|\xi|\widehat{\ml{G}}_0(t,|\xi|)\|_{L^2}\int_{|y|\geqslant t^{\alpha_1}}|y|\,|u_1^{k,p_0}(y)|\mathrm{d}y=o(t^{-\frac{n}{4}}),
\end{align*}
for $t\gg1$, where we considered $u_1^{k,p_0}\in L^{1,1}$ so that
\begin{align*}
\lim\limits_{t\to\infty}\int_{|y|\geqslant t^{\alpha_1}}|y|\,|u_1^{k,p_0}(y)|\mathrm{d}y=0.
\end{align*}
Thus, we claim
\begin{align*}
	\|\ml{E}_2(t,\cdot)\|_{L^2}\leqslant\sum\limits_{j=1,2,3}\|\ml{E}_{2,j}(t,\cdot)\|_{L^2}=o(t^{-\frac{n}{4}})
\end{align*}
for $t\gg1$ and $n=1,2,3$.

By the same way, with the aid of $\theta_0\in L^{1,1}$, we may obtain
\begin{align*}
\|\ml{E}_3(t,\cdot)\|_{L^2}=o(t^{-\frac{n}{4}})
\end{align*}
for $t\gg1$. Applying the same method as \eqref{Sub-tract-01}, we assert that
\begin{align*}
	\|\ml{E}_4(t,\cdot)\|_{L^2}+\|\ml{E}_5(t,\cdot)\|_{L^2}+\|\ml{E}_6(t,\cdot)\|_{L^2}\lesssim t^{-\frac{1}{2}-\frac{n}{4}}\left\|\left(u_0^{k,p_0},u_1^{k,p_0},\theta_0\right)\right\|_{(L^2\cap L^1)\times(L^2\cap L^{1,1})^2}
\end{align*}
for large-time. Summarizing all estimates in the above, that leads to \eqref{Second-order-expansion}  for $t\gg1$ to complete our proof of Corollary \ref{Coro-Second-order}.

\subsection{Optimal estimates for the leading term}\label{Sub-Leading}
Concerning upper bound estimates for large-time, we have proved them already in \eqref{Sub-tract-01} and \eqref{Sub-tract-large-01} totally. Thus, we are just required to concentrate on the lower one, whose key is lower bound estimates for $\psi^k(t,\cdot)$ in the $L^2$ norm. The Fourier imagine of $\psi^k(t,\cdot)$ is represented by
\begin{align*}
\widehat{\psi}^k(t,\xi)&=i\left(\big(\xi\circ M_{u_1^{k,p_0}}\big)\widehat{\ml{G}}_0(t,|\xi|)+(\xi\circ M_{\theta_0})\widehat{\ml{G}}_{1,k}(t,\xi)\right)+\widehat{\ml{G}}_2(t,|\xi|)P_{u_0^{k,p_0}}\\
&\quad+\big(\widehat{\ml{H}}_0(t,|\xi|)+\widehat{\ml{G}}_3(t,|\xi|)\big)P_{u_1^{k,p_0}}+\big(\widehat{\ml{H}}_{1,k}(t,\xi)+\widehat{\ml{G}}_{4,k}(t,\xi)\big)P_{\theta_0},
\end{align*}
therefore, we may re-organize it to separate the imaginary part and the real part
\begin{align*}
\widehat{\psi}^k(t,\xi)&=i\left(\big(\xi\circ M_{u_1^{k,p_0}}\big)\widehat{\ml{G}}_0(t,|\xi|)+\frac{1}{i}\big(\widehat{\ml{H}}_{1,k}(t,\xi)+\widehat{\ml{G}}_{4,k}(t,\xi)\big)P_{\theta_0}\right)\\
&\quad+\left(\widehat{\ml{G}}_2(t,|\xi|)P_{u_0^{k,p_0}}+\big(\widehat{\ml{H}}_0(t,|\xi|)+\widehat{\ml{G}}_3(t,|\xi|)\big)P_{u_1^{k,p_0}}+i(\xi\circ M_{\theta_0})\widehat{\ml{G}}_{1,k}(t,\xi)\right)\\
&=:i\widehat{\ml{E}}_7(t,\xi)+\widehat{\ml{E}}_8(t,\xi),
\end{align*}
where $\widehat{\ml{E}}_7(t,\xi)$ and $\widehat{\ml{E}}_8(t,\xi)$ are the real functions with parameters \eqref{beta012} such that
\begin{align*}
&\widehat{\ml{E}}_7(t,\xi)=\frac{\sin(\beta_1|\xi|t)}{\beta_1}\mathrm{e}^{-\beta_2|\xi|^2t}\left[ \left(\frac{\xi}{|\xi|}\circ M_{u_1^{k,p_0}}\right)+A_1\frac{\xi_k}{|\xi|}|\xi|^2tP_{\theta_0}+A_2\frac{\xi_k}{|\xi|}P_{\theta_0} \right],\\
&\widehat{\ml{E}}_8(t,\xi)=\mathrm{e}^{-\beta_0|\xi|^2t}\left[\frac{\gamma_1\gamma_2}{b^2+\gamma_1\gamma_2}P_{u_0^{k,p_0}}+\frac{\kappa\gamma_1\gamma_2}{(b^2+\gamma_1\gamma_2)^2}P_{u_1^{k,p_0}}+\frac{\gamma_1}{b^2+\gamma_1\gamma_2}\frac{\xi_k}{|\xi|}\left(\frac{\xi}{|\xi|}\circ M_{\theta_0}\right)\right]\\
&\quad\ \ +\cos(\beta_1|\xi|t)\mathrm{e}^{-\beta_2|\xi|^2t}\left[\frac{b^2P_{u_0^{k,p_0}}}{b^2+\gamma_1\gamma_2}-\frac{A_1}{\gamma_1}|\xi|^2tP_{u_1^{k,p_0}}-\frac{\kappa \gamma_1\gamma_2P_{u_1^{k,p_0}}}{(b^2+\gamma_1\gamma_2)^2}-\frac{\gamma_1}{b^2+\gamma_1\gamma_2}\frac{\xi_k}{|\xi|}\left(\frac{\xi}{|\xi|}\circ M_{\theta_0}\right)\right],
\end{align*}
equipping
	\begin{align*}
	A_1:=\frac{\kappa^2\gamma_1^2\gamma_2(\gamma_1\gamma_2+4b^2)}{8\beta_1^6}\ \  \mbox{and}\ \ A_2:=\frac{\gamma_1\kappa(\gamma_1\gamma_2-2b^2)}{\beta_1^4}.
	\end{align*}
Because $\widehat{\ml{E}}_7(t,\xi)$ and $\widehat{\ml{E}}_8(t,\xi)$ are the imaginary part and the real part, namely,
\begin{align*}
\|\widehat{\psi}^k(t,\xi)\|_{L^2}^2=\|\widehat{\ml{E}}_7(t,\xi)\|_{L^2}^2+\|\widehat{\ml{E}}_8(t,\xi)\|_{L^2}^2,
\end{align*}
 we will estimate them in the $L^2$ norm separately.

Let us apply polar coordinates again and separate them with respect to the power of $r$ in the integration to see
\begin{align*}
\|\widehat{\ml{E}}_7(t,\xi)\|_{L^2}^2&=\frac{1}{\beta_1^2}\int_{0}^{\infty}\int_{\mb{S}^{n-1}}|\sin(\beta_1rt)|^2\mathrm{e}^{-2\beta_2r^2t}r^{n-1}\left(\big(\omega\circ M_{u_1^{k,p_0}}\big)+A_1\omega_kr^2tP_{\theta_0}+A_2\omega_kP_{\theta_0}\right)^2\mathrm{d}\sigma_{\omega}\mathrm{d}r\\
&=\frac{\mb{B}_0}{\beta_1^2}\int_{0}^{\infty}|\sin(\beta_1rt)|^2\mathrm{e}^{-2\beta_2r^2t}r^{n-1}\mathrm{d}r+\frac{\mb{B}_1}{\beta_1^2}t\int_{0}^{\infty}|\sin(\beta_1rt)|^2\mathrm{e}^{-2\beta_2r^2t}r^{n+1}\mathrm{d}r\\
&\quad+\frac{\mb{B}_2}{\beta_1^2}t^2\int_{0}^{\infty}|\sin(\beta_1rt)|^2\mathrm{e}^{-2\beta_2r^2t}r^{n+3}\mathrm{d}r,
\end{align*}
where the integrals over the domains $\mb{S}^{n-1}$ are
\begin{align*}
\mb{B}_0&:=\int_{\mb{S}^{n-1}}\left(\big(\omega\circ M_{u_1^{k,p_0}}\big)^2+2A_2\omega_kP_{\theta_0}\big(\omega\circ M_{u_1^{k,p_0}}\big)+|A_2|^2\omega_k^2|P_{\theta_0}|^2\right)\mathrm{d}\sigma_{\omega},\\
\mb{B}_1&:=2A_1P_{\theta_0}\int_{\mb{S}^{n-1}}\left(\big(\omega\circ M_{u_1^{k,p_0}}\big)+A_2\omega_kP_{\theta_0}\right)\omega_k\mathrm{d}\sigma_{\omega},\\
\mb{B}_2&:=|A_1|^2|P_{\theta_0}|^2\int_{\mb{S}^{n-1}}\omega_k^2\mathrm{d}\sigma_{\omega}.
\end{align*}
By using the property of the basic as follows:
\begin{align*}
\int_{\mb{S}^{n-1}}\omega_j\omega_k\mathrm{d}\sigma_{\omega}=\begin{cases}
	\displaystyle{\frac{1}{n}|\mb{S}^{n-1}|}&\mbox{if}\ \ j=k,\\
	0&\mbox{if}\ \ j\neq k,
\end{cases}
\end{align*}
we are able to explicitly calculate the constants
\begin{align*}
\mb{B}_0&=\frac{|\mb{S}^{n-1}|}{n}\left(|M_{u_1^{k,p_0}}|^2+2A_2P_{\theta_0}M_{u_1^{k,p_0}}^k+|A_2|^2|P_{\theta_0}|^2\right),\\
\mb{B}_1&=2A_1P_{\theta_0}\frac{|\mb{S}^{n-1}|}{n}\left(M_{u_1^{k,p_0}}^k+A_2P_{\theta_0}\right),\\
\mb{B}_2&=|A_1|^2|P_{\theta_0}|^2\frac{|\mb{S}^{n-1}|}{n},
\end{align*}
with $M_{u_1^{k,p_0}}^k$ denoting the $k$-th element of the vector $M_{u_1^{k,p_0}}=(M_{u_1^{k,p_0}}^1,\cdots,M_{u_1^{k,p_0}}^n)$. Indeed, for any $\alpha_2\geqslant0$, taking $s=r\sqrt{t}$ and $\eta=s\sqrt{2\beta_2}$, we know from the Riemann-Lebesgue theorem that
\begin{align*}
&\int_0^{\infty}|\sin(\beta_1rt)|^2\mathrm{e}^{-2\beta_2r^2t}r^{n-1+\alpha_2}\mathrm{d}r\\
&\qquad=t^{-\frac{n+\alpha_2}{2}}\int_0^{\infty}|\sin(\beta_1s\sqrt{t})|^2\mathrm{e}^{-2\beta_2s^2}s^{n-1+\alpha_2}\mathrm{d}s\\
&\qquad=\frac{1}{2}t^{-\frac{n+\alpha_2}{2}}\left(\,\int_0^{\infty}\mathrm{e}^{-2\beta_2s^2}s^{n-1+\alpha_2}\mathrm{d}s-\int_0^{\infty}\cos(2\beta_1s\sqrt{t})\mathrm{e}^{-2\beta_2s^2}s^{n-1+\alpha_2}\mathrm{d}s\right)\\
&\qquad=\frac{1}{2} t^{-\frac{n+\alpha_2}{2}}\left((2\beta_2)^{-\frac{n+\alpha_2}{2}}\int_0^{\infty}\mathrm{e}^{-\eta^2}\eta^{n-1+\alpha_2}\mathrm{d}\eta+o(1)\right)\\
&\qquad=\frac{1}{4} t^{-\frac{n+\alpha_2}{2}}\left((2\beta_2)^{-\frac{n+\alpha_2}{2}}\Gamma\left(\frac{n+\alpha_2}{2}\right)+o(1)\right)
\end{align*}
for $t\gg1$, where we employed $2\sin^2z=1-\cos(2z)$ and
\begin{align*}
	\Gamma(z):=\int_0^{\infty}\mathrm{e}^{-\eta}\eta^{z-1}\mathrm{d}\eta=2\int_0^{\infty}\mathrm{e}^{-\eta^2}\eta^{2z-1}\mathrm{d}\eta.
\end{align*}
Due to the relation $\Gamma(z+1)=z \Gamma(z)$ so that
\begin{align*}
\Gamma\left(\frac{n+4}{2}\right)=\frac{n+2}{2}\Gamma\left(\frac{n+2}{2}\right)=\frac{(n+2)n}{4}\Gamma\left(\frac{n}{2}\right),
\end{align*}
it shows
\begin{align*}
\|\widehat{\ml{E}}_7(t,\xi)\|_{L^2}^2&=\frac{(2\beta_2)^{-\frac{n}{2}}}{4\beta_1^2}t^{-\frac{n}{2}}\left[\mb{B}_0\Gamma\left(\frac{n}{2}\right)+(2\beta_2)^{-1}\mb{B}_1\Gamma\left(\frac{n+2}{2}\right)+(2\beta_2)^{-2}\mb{B}_2\Gamma\left(\frac{n+4}{2}\right)+o(1)\right]\\
&\gtrsim t^{-\frac{n}{2}}\left(\mb{B}_0+\frac{n}{4\beta_2}\mb{B}_1+\frac{(n+2)n}{16 \beta_2^2}\mb{B}_2 \right)=:
t^{-\frac{n}{2}}\frac{|\mb{S}^{n-1}|}{n}\widetilde{\mb{B}}^2.
\end{align*}
It is worth noting that the non-negative quantity $\widetilde{\mb{B}}^2$ can be rewritten by
\begin{align*}
\widetilde{\mb{B}}^2&=|M_{u_1^{k,p_0}}|^2+2A_2P_{\theta_0}M_{u_1^{k,p_0}}^k+|A_2|^2|P_{\theta_0}|^2+\frac{n}{2 \beta_2}A_1P_{\theta_0}\big(M_{u_1^{k,p_0}}^k+A_2P_{\theta_0}\big)\\
&\quad+\frac{(n+2)n}{16 \beta_2^2}|A_1|^2|P_{\theta_0}|^2\\
&=|M_{u_1^{k,p_0}}|^2+2 \left( A_2+\frac{n}{4 \beta_2}A_1 \right) P_{\theta_0} M_{u_1^{k,p_0}}^k 
+ \left[\left( A_2+\frac{n}{4 \beta_2}A_1 \right)^{2}+\frac{n}{8 \beta_{2}^{2}}A_{1}^{2}\right] | P_{\theta_0}|^{2},
\end{align*}
and then
\begin{align*}
\widetilde{\mb{B}}^2&=\left(
1-\frac{\left( A_2+\frac{n}{4 \beta_2}A_1 \right)^{2} }{
\left( A_2+\frac{n}{4 \beta_2}A_1 \right)^{2}  +\frac{n}{16 \beta_{2}^{2}} A_{1}^{2} 
} 
\right) |M_{u_1^{k,p_0}}^k|^{2} + \sum\limits_{j\neq k}|M^j_{u_1^{k,p_0}}|^2
+\frac{n}{16 \beta_{2}^{2}} A_{1}^{2} |P_{\theta_0}|^2 \\
& \quad \ + 
\left[\sqrt{
\left( A_2+\frac{n}{4 \beta_2}A_1 \right)^{2}  +\frac{n}{16 \beta_{2}^{2}} A_{1}^{2} 
}\, P_{\theta_{0}} +\frac{A_2+\frac{n}{4 \beta_2}A_1}{\sqrt{
\left( A_2+\frac{n}{4 \beta_2}A_1 \right)^{2}  +\frac{n}{16 \beta_{2}^{2}} A_{1}^{2} 
}}M_{u_1^{k,p_0}}^k\right]^{2}
\\
& \geqslant \frac{n}{16 \beta_{2}^{2}} A_{1}^{2} 
\min \left( 
1, 
\frac{1}{
\left( A_2+\frac{n}{4 \beta_2}A_1 \right)^{2}  +\frac{n}{16 \beta_{2}^{2}} A_{1}^{2} 
} 
\right)  \left( |M_{u_1^{k,p_0}}|^{2} +|P_{\theta_0}|^2 \right).
\end{align*}

Next, we turn to some estimates for $\|\widehat{\ml{E}}_8(t,\xi)\|_{L^2}^2$. For the simplicity, we may denote 
\begin{align*}
A_{3} & := \frac{\gamma_1\gamma_2P_{u_0^{k,p_0}}}{b^2+\gamma_1\gamma_2}, \quad 
A_{4}:= \frac{\kappa\gamma_1\gamma_2P_{u_1^{k,p_0}}}{(b^2+\gamma_1\gamma_2)^2}, \quad
A_{5}:= \frac{\gamma_1}{b^2+\gamma_1\gamma_2}, \\
A_{6} & :=  \frac{b^2P_{u_0^{k,p_0}}}{b^2+\gamma_1\gamma_2}, \quad \,\,
A_{7}:= -\frac{A_1}{\gamma_1}P_{u_1^{k,p_0}},
\end{align*}
in other words, the goal can be reset by
\begin{align*}
\widehat{\ml{E}}_8(t,\xi)&=\mathrm{e}^{-\beta_0|\xi|^2t}\left[A_3+A_4+A_5\frac{\xi_k}{|\xi|}\left(\frac{\xi}{|\xi|}\circ M_{\theta_0}\right)\right]\\
&\quad+\cos(\beta_1|\xi|t)\mathrm{e}^{-\beta_2|\xi|^2t}\left[A_6+A_7|\xi|^2t-A_4-A_5\frac{\xi_k}{|\xi|}\left(\frac{\xi}{|\xi|}\circ M_{\theta_0}\right)\right].
\end{align*}
Then, we divide it into three portions
\begin{align*}
\|\widehat{\ml{E}}_8(t,\xi)\|_{L^2}^2&=\mb{B}_3+\mb{B}_4+2\mb{B}_5,
\end{align*}
where 
\begin{align*}
\mb{B}_3&:=
\int_{0}^{\infty} \mathrm{e}^{-2\beta_0 r^2t}r^{n-1} 
\int_{\mb{S}^{n-1}}\left[A_{3}+A_{4}+A_{5} \omega_{k} \big(\omega\circ M_{\theta_{0}}\big) \right]^{2} \mathrm{d}\sigma_{\omega} \mathrm{d}r,\\
\mb{B}_4&:=\int_{0}^{\infty} |\cos(\beta_1rt)|^2 \mathrm{e}^{-2\beta_2r^2t} r^{n-1} 
\int_{\mb{S}^{n-1}}\left[A_{6}+A_{7}tr^{2}-A_{4}-A_{5} \omega_{k} \big(\omega\circ M_{\theta_{0}}\big) \right]^{2} \mathrm{d}\sigma_{\omega} \mathrm{d}r,
\end{align*}
and
\begin{align*}
\mb{B}_5 &:= \int_{0}^{\infty} \cos(\beta_1rt) \mathrm{e}^{-(\beta_0 +\beta_2)r^2t} r^{n-1} \\
& \quad\ \qquad \times 
\int_{\mb{S}^{n-1}}
\left[A_{3}+A_{4}+A_{5} \omega_{k} \big(\omega\circ M_{\theta_{0}}\big) \right]
\left[A_{6}+A_{7}tr^{2}-A_{4}-A_{5} \omega_{k} \big(\omega\circ M_{\theta_{0}}\big) \right] \mathrm{d}\sigma_{\omega} \mathrm{d}r,
\end{align*}
by the same argument for $\mb{B}_j$ when $j=0,1,2$. 
Now, let us recall the equality that 
\begin{align} \label{eq:35}
\int_{\mb{S}^{n-1}}\left| \omega_{k} \big(\omega\circ M_{\theta_{0}}\big) \right|^{2} \mathrm{d}\sigma_{\omega}
&=\frac{\pi^{\frac{n}{2}}}{2\Gamma(\frac{n+4}{2})}\sum\limits_{j\neq k}|M_{\theta_0}^j|^2+\frac{3\pi^{\frac{n}{2}}}{2\Gamma(\frac{n+4}{2})}|M_{\theta_0}^k|^2\notag\\
&=
\frac{|\mb{S}^{n-1}|}{n(n+2)}  \left( |M_{\theta_{0}}|^{2} + 2|M_{\theta_{0}}^{k}|^{2} \right).
\end{align}
Concerning the proof of \eqref{eq:35}, one may see \cite[Lemma 2.6]{Takeda=2022}. 
Combining the method of the derivations of $\mb{B}_0,\mb{B}_1$, as well as \eqref{eq:35}, we see that
\begin{align*}
\mb{B}_3&=|\mb{S}^{n-1}|
\int_{0}^{\infty} \mathrm{e}^{-2\beta_0 r^2t}r^{n-1} \left[
(A_{3} +A_{4} )^{2} +\frac{2}{n}(A_{3}+A_{4})A_{5} M_{\theta_{0}}^{k} + \frac{1}{n(n+2)} A_{5}^{2}\left(|M_{\theta_{0}}|^{2} + 2|M_{\theta_{0}}^{k}|^{2} \right)\right] \mathrm{d}r.
\end{align*}
Then, noting that $\frac{1}{n}\leqslant\frac{2}{n+2}$ for $n \geqslant2$ and the trivial case $M_{\theta_{0}}=M_{\theta_{0}}^k$ when $n=1$,
we conclude that
\begin{align*}
\mb{B}_3&\gtrsim t^{-\frac{n}{2}}\left( 
(A_{3}+A_{4})^{2} +|M_{\theta_{0}}|^{2}
\right)
\\
&\gtrsim t^{-\frac{n}{2}}\left( 
\left((b^{2}+ \gamma_{1} \gamma_{2}) P_{u_0^{k,p_0}} +\kappa  P_{u_1^{k,p_0}} \right)^{2} +|M_{\theta_{0}}|^{2}
\right)
\end{align*}
by applying the same argument for $\widetilde{\mb{B}}^2$. 

For the term $\mb{B}_4$, we can use $2\cos^2z=1+\cos(2z)$ to arrive at 
\begin{align*}
\mb{B}_4&=\mb{B}_{4,1}+\mb{B}_{4,2},
\end{align*}
where we wrote
\begin{align*}
\mb{B}_{4,1}&:=\frac{1}{2} \int_{0}^{\infty} \mathrm{e}^{-2\beta_2r^2t} r^{n-1} 
\int_{\mb{S}^{n-1}}\left[A_{6}+A_{7}tr^{2}-A_{4}-A_{5} \omega_{k} \big(\omega\circ M_{\theta_{0}}\big) \right]^{2} \mathrm{d}\sigma_{\omega} \mathrm{d}r, \\
\mb{B}_{4,2}&:=\frac{1}{2} \int_{0}^{\infty} \cos(2 \beta_1rt) \mathrm{e}^{-2\beta_2r^2t} r^{n-1} 
\int_{\mb{S}^{n-1}}\left[A_{6}+A_{7}tr^{2}-A_{4}-A_{5} \omega_{k} \big(\omega\circ M_{\theta_{0}}\big) \right]^{2} \mathrm{d}\sigma_{\omega} \mathrm{d}r.
\end{align*}
As in the estimate of $\mb{B}_{3}$, we re-formula 
\begin{align*}
\mb{B}_{4,1}&=t^{-\frac{n}{2}}\frac{|\mb{S}^{n-1}|}{2}
\int_{0}^{\infty} \mathrm{e}^{-2\beta_0 r^2}r^{n-1} \bigg[
(A_{6} -A_{4}+A_{7} r^{2} )^{2}
-\frac{2}{n}(A_{6} -A_{4}+A_{7} r^{2} ) A_{5} M_{\theta_{0}}^{k} \\
&\qquad\qquad\qquad\qquad\qquad\qquad \ \  \ + \frac{A_{5}^{2}}{n(n+2)} \left( |M_{\theta_{0}}|^{2} + 2|M_{\theta_{0}}^{k}|^{2} \right) \bigg] \mathrm{d}r
\end{align*}
and estimate
\begin{align*}
\mb{B}_{4,1} & \gtrsim t^{-\frac{n}{2}}
\int_{0}^{\infty} \mathrm{e}^{-2\beta_0 r^2}r^{n-1} \left(
(A_{6} -A_{4}+A_{7} r^{2} )^{2}+ |M_{\theta_{0}}|^{2} \right) \mathrm{d}r \\
& = t^{-\frac{n}{2}} \Gamma\left( \frac{n}{2} \right)(2 \beta_{2})^{-\frac{n}{2}} 
\left(
(A_{6}-A_{4})^{2} +\frac{n}{2 \beta_{2}} A_{7}(A_{6}-A_{4}) +\frac{n}{2} \left( \frac{n}{2}+1 \right) \frac{A_{7}^{2}}{(2 \beta_{2})^{2}} 
\right) \\
& \quad +t^{-\frac{n}{2}} |M_{\theta_{0}}|^{2}
\int_{0}^{\infty} \mathrm{e}^{-2\beta_0 r^2}r^{n-1} \mathrm{d}r \\
& \gtrsim t^{-\frac{n}{2}} \left( 
\left(A_{6} -A_{4} \right)^{2}+A_{7}^{2}+ |M_{\theta_{0}}|^{2}\right) \\
& \gtrsim t^{-\frac{n}{2}} \big( |P_{u_0^{k,p_0}}|^{2} +  |P_{u_1^{k,p_0}}|^{2} + |M_{\theta_{0}}|^{2} \big), 
\end{align*} 
where we used the fact that 
$(x-y)^{2} +y^{2} =0$ for $x,y \in \mathbb{R}$ implies $x=y=0$.
On the other hand, the remainder term $\mb{B}_{4,2}+2\mb{B}_{5}$ is easily estimated by the Riemann-Lebesgue formula as follows:
\begin{align*}
\mb{B}_{4,2}+2\mb{B}_{5} &=
\frac{t^{-\frac{n}{2}}}{2} \int_{0}^{\infty} \cos(2 \beta_1r \sqrt{t}) \mathrm{e}^{-2\beta_2r^2} r^{n-1} 
\int_{\mb{S}^{n-1}}\left[A_{6}+A_{7}r^{2}-A_{4}-A_{5} \omega_{k} \big(\omega\circ M_{\theta_{0}}\big) \right]^{2} \mathrm{d}\sigma_{\omega} \mathrm{d}r \\
& \quad+2t^{-\frac{n}{2}}
\int_{0}^{\infty} \cos(\beta_1r \sqrt{t}) \mathrm{e}^{-(\beta_0 +\beta_2)r^2} r^{n-1}\\
&\qquad\qquad\qquad\times\int_{\mb{S}^{n-1}}
\left[A_{3}+A_{4}+A_{5} \omega_{k} \big(\omega\circ M_{\theta_{0}}\big) \right]\\
&\qquad\qquad\qquad\qquad\qquad\times\left[A_{6}+A_{7}r^{2}-A_{4}-A_{5} \omega_{k} \big(\omega\circ M_{\theta_{0}}\big) \right] \mathrm{d}\sigma_{\omega} \mathrm{d}r\\
& =  o(t^{-\frac{n}{2}})
\end{align*}
as $t\gg1$ for $n=1,2,3$.

Thus, the summary of last estimates concludes
\begin{align*}
	\|\widehat{\psi}^k(t,\xi)\|_{L^2}^2
	&\gtrsim t^{-\frac{n}{2}}\big(|M_{u_1^{k,p_0}}|^{2} +|P_{\theta_0}|^2\big)+t^{-\frac{n}{2}} \big( |P_{u_0^{k,p_0}}|^{2} +  |P_{u_1^{k,p_0}}|^{2} + |M_{\theta_{0}}|^{2} \big)\\
	&\gtrsim t^{-\frac{n}{2}}\mb{B}^2.
\end{align*}
We combine with Minkowski's inequality and the derived error estimates \eqref{Second-order-expansion} so that
\begin{align*}
\|u^{k,p_0}(t,\cdot)-\varphi^k(t,\cdot)\|_{L^2}&\gtrsim t^{-\frac{n}{4}}|\mb{B}|-o(t^{-\frac{n}{4}})\gtrsim t^{-\frac{n}{4}}|\mb{B}|
\end{align*}
for $t\gg1$. Then, we complete the proof of Theorem \ref{Thm-Optimal-Lead}.

\section{Final remarks}\label{Sect-Final}
Throughout this work, we have investigated large-time asymptotic profiles for a typical hyperbolic-parabolic coupled system, i.e. the classical thermoelastic system for $n=1,2,3$. Our methods are based on the treatments for third-order (in time) evolution equations and fine analyses for the Fourier multipliers carrying singularities, dissipations and oscillations. We conjecture that our methodology can be generalized to other thermoelastic models even hyperbolic-hyperbolic coupled systems, including thermoelasticity system with second sound \cite{Racke=2002,Racke=2003} (i.e. elastic body with Cattaneo's law of heat conduction), thermoelasticity of type II or type III \cite{Reissig-Wang=2005,Yang-Wang=2006} (i.e. elastic body with Jeffreys type of heat conduction). Nevertheless, one needs a suitable way to deal with fourth-order evolution equations.

%
\appendix
\section{Lower bound estimates for the free wave equation}\label{Appendix-A}
This appendix contributes to lower bound estimates of the solution to the linear wave model \eqref{Wave-Eq} under $w_1\in L^1$ regularity, which improves the estimates in \cite[Theorems 1.1 and 1.2]{Ikehata=2022-wave}. To be specific, we only require $L^1$ regularity rather than $L^{1,1}$ regularity in \cite{Ikehata=2022-wave} for the second data. Moreover, we do not rely on the trick of shrinking domain in \cite[Equation (2.17)]{Ikehata=2022-wave}. 

The crucial tool is estimating the oscillating Fourier multiplier in the next proposition. 
\begin{prop}\label{Prop-Improved-Ikehata}
	Let $g\in L^1$ for $n=1,2$. The following lower bound estimates hold:
	\begin{align*}
	I_g(t;n):=\left\|\ml{F}^{-1}_{\xi\to x}\left(\frac{\sin(|\xi|t)}{|\xi|}\widehat{g}(\xi)\right)\right\|_{L^2}\gtrsim\begin{cases}
	\sqrt{t}\,|P_g|&\mbox{if}\ \ n=1,\\
	\sqrt{\ln t}\,|P_g|&\mbox{if}\ \ n=2,
	\end{cases}
	\end{align*}
for any $t\gg1$.
\end{prop}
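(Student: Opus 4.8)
The plan is to analyze the multiplier $\widehat{K}(t,|\xi|):=\dfrac{\sin(|\xi|t)}{|\xi|}$ in the low-frequency regime only, since a cut-off to $\mathcal{Z}_{\intt}(\varepsilon_0)$ already captures the growth while the complementary zone contributes at most $O(t^{-N})$ or a bounded term. Writing $\widehat{g}(\xi)=P_g+(\widehat{g}(\xi)-P_g)$ and using $\|\widehat{g}-P_g\|_{L^\infty(\mathcal{Z}_{\intt})}\to 0$ is \emph{not} available here because we only assume $g\in L^1$ (not $L^{1,1}$), so instead I would exploit the continuity and uniform boundedness of $\widehat{g}$ together with $\widehat{g}(0)=P_g$: on the ball $|\xi|\le\delta$, $|\widehat{g}(\xi)-P_g|\le\omega(\delta)$ where $\omega$ is the modulus of continuity of $\widehat{g}$ at the origin (finite and tending to $0$ as $\delta\to 0$ since $\widehat{g}$ is continuous). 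The key point is that the relevant frequencies scale like $|\xi|\sim 1/t$, hence for $t\gg 1$ they lie well inside any fixed ball where $\widehat{g}$ is close to $P_g$.

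Concretely, I would first record the elementary one-dimensional fact
\[
\left\|\chi_{\intt}(\xi)\frac{\sin(|\xi|t)}{|\xi|}\right\|_{L^2}\simeq \mathcal{A}_n(t)=\begin{cases}\sqrt{t}&n=1,\\ \sqrt{\ln t}&n=2,\end{cases}
\]
which is the content of \eqref{optimal-fact-02} restricted to low frequencies; for the lower bound one uses polar coordinates, substitutes $\sigma=|\xi|t$ (for $n=1$) or $\sigma=|\xi|t$ after extracting $r^{n-1}$ (for $n=2$), and integrates over the region where $|\sin\sigma|\ge c>0$, exactly as in the proof of Proposition \ref{Prop-Fourier-mul-Takeda}. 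Then I would apply the reverse triangle inequality in $L^2$:
\[
I_g(t;n)\ge |P_g|\left\|\chi_{\intt}(\xi)\frac{\sin(|\xi|t)}{|\xi|}\right\|_{L^2}-\left\|\chi_{\intt}(\xi)\frac{\sin(|\xi|t)}{|\xi|}\big(\widehat{g}(\xi)-P_g\big)\right\|_{L^2}-(\text{bounded tail}).
\]
For the error term, I would split the ball $|\xi|\le\varepsilon_0$ into $|\xi|\le\delta$ and $\delta\le|\xi|\le\varepsilon_0$: on the inner piece the factor $|\widehat{g}(\xi)-P_g|\le\omega(\delta)$ pulls out, leaving $\omega(\delta)\,\mathcal{A}_n(t)$; on the annulus $\delta\le|\xi|\le\varepsilon_0$ the multiplier $\sin(|\xi|t)/|\xi|$ is bounded by $1/\delta$, so that piece is $O_\delta(1)$ uniformly in $t$. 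Choosing $\delta$ small so that $\omega(\delta)$ is a small fraction of the implied constant in the lower bound for $\|\chi_{\intt}\sin(|\xi|t)/|\xi|\|_{L^2}$, and then $t$ large enough to beat the $O_\delta(1)$ term against $\mathcal{A}_n(t)\to\infty$, yields $I_g(t;n)\gtrsim \mathcal{A}_n(t)|P_g|$ for $t\gg 1$.

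The main obstacle — and the whole reason this improves on \cite{Ikehata=2022-wave} — is handling the error term $\widehat{g}-P_g$ with only $L^1$ regularity: one cannot bound it by $|\xi|\,\|g\|_{L^{1,1}}$, so the naive estimate $\||\xi|\sin(|\xi|t)/|\xi|\cdot(\cdots)\|=\|\sin(|\xi|t)(\cdots)\|$ giving an $O(t^{-n/4})$-type gain is unavailable. The two-scale split above (a $\delta$-ball where continuity of $\widehat g$ gives smallness, and an annulus where the singular multiplier is harmless) circumvents this, at the cost of only producing $I_g\gtrsim\mathcal A_n(t)|P_g|$ rather than a sharp two-sided asymptotic with explicit remainder; but that lower bound is exactly what is needed. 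A minor technical point to be careful about is that $\omega(\delta)$ must be compared against the \emph{lower} constant $c_1$ in $\|\chi_{\intt}\sin(|\xi|t)/|\xi|\|_{L^2}\ge c_1\mathcal A_n(t)$, not the upper one, so the order of choosing constants is: fix $c_1$, then pick $\delta$ with $C\omega(\delta)<c_1/2$ (here $C$ is the $L^2$-upper constant $\|\chi_{\intt}\sin(|\xi|t)/|\xi|\|_{L^2}\le C\mathcal A_n(t)$), then pick $t$ large.
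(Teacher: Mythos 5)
Your approach is sound and genuinely different in execution from the paper's proof, although both ultimately rest on the same fact that $g\in L^1$ makes $\widehat{g}$ continuous with $\widehat{g}(0)=P_g$. The paper argues directly on $I_g(t;n)^2$: for $n=1$ it rescales $\eta=\xi t$ and applies the Lebesgue dominated convergence theorem to $|\widehat{g}(t^{-1}\eta)|^2$, so that the full integral converges to $|P_g|^2\int_{\mb{R}}|\sin\eta|^2|\eta|^{-2}\mathrm{d}\eta$; for $n=2$ it inserts $\mathrm{e}^{-r^2t}\leqslant1$, rescales, shrinks to $[t^{-\frac{1}{2}},1]$, uses $2\sin^2 z=1-\cos(2z)$ and controls the oscillatory piece by integration by parts, exactly as in the proof of Proposition \ref{Prop-Fourier-mul-Takeda}. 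You instead decompose $\widehat{g}=P_g+(\widehat{g}-P_g)$, use the two-sided bound $\|\chi_{\intt}(\xi)\sin(|\xi|t)/|\xi|\|_{L^2}\simeq\ml{A}_n(t)$ (a legitimate low-frequency variant of \eqref{optimal-fact-02}, provable by the scaling/level-set argument you sketch), and handle the error by a two-scale split: modulus of continuity $\omega(\delta)$ of $\widehat{g}$ at the origin on $|\xi|\leqslant\delta$, and the crude bound $|\sin(|\xi|t)|/|\xi|\leqslant\delta^{-1}$ on the annulus, which is $O_\delta(1)$ uniformly in $t$. What your route buys is a more quantitative, dominated-convergence-free argument with an explicit error structure; what the paper's route buys is brevity and no bookkeeping of upper/lower multiplier constants. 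Both proofs give a time threshold $t\gg1$ depending on $g$, which is all the statement requires. Your restriction to $\ml{Z}_{\intt}(\varepsilon_0)$ is already a lower bound, so the extra ``bounded tail'' subtraction is unnecessary (though harmless).

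One point needs correcting: your prescription ``pick $\delta$ with $C\omega(\delta)<c_1/2$'' is not enough, because the inner-ball error $C\omega(\delta)\ml{A}_n(t)$ does not carry a factor $|P_g|$, whereas the main term is $c_1|P_g|\ml{A}_n(t)$. If $|P_g|$ is small but nonzero, $c_1|P_g|-C\omega(\delta)$ can be negative and the bound collapses. The fix is immediate: dispose of the trivial case $P_g=0$ first, and for $P_g\neq0$ choose $\delta$ so that $C\omega(\delta)\leqslant\tfrac{c_1}{2}|P_g|$ (possible since $\omega(\delta)\to0$), then take $t$ large enough that the $O_\delta(1)$ annulus term is below $\tfrac{c_1}{4}|P_g|\ml{A}_n(t)$. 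Since $\delta$ already depends on $g$ through $\omega$, making it depend on $|P_g|$ as well costs nothing, and the resulting implicit constant $\tfrac{c_1}{4}$ is independent of $g$, as required by the statement.
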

\begin{proof}
When $n=1$, the application of change of variable $\eta=\xi t$ shows
\begin{align}\label{Eq-*1}
\big(I_g(t;1)\big)^2=t\int_{\mb{R}}\frac{|\sin\eta|^2}{|\eta|^2}\left|\widehat{g}\left(t^{-1}\eta \right)\right|^2\mathrm{d}\eta.
\end{align}
By the Lebesgue dominated convergence theorem and $\widehat{g}(0)=P_g$, we know
\begin{align*}
\lim\limits_{t\to\infty}\int_{\mb{R}}\frac{|\sin\eta|^2}{|\eta|^2}\left|\widehat{g}\left(t^{-1}\eta\right)\right|^2\mathrm{d}\eta&=\int_{\mb{R}}\frac{|\sin\eta|^2}{|\eta|^2}|\widehat{g}(0)|^2\mathrm{d}\eta=|P_g|^2\int_{\mb{R}}\frac{|\sin\eta|^2}{|\eta|^2}\mathrm{d}\eta,
\end{align*}
namely,
\begin{align*}
\int_{\mb{R}}\frac{|\sin\eta|^2}{|\eta|^2}\left|\widehat{g}\left(t^{-1}\eta\right)\right|^2\mathrm{d}\eta-|P_g|^2\int_{\mb{R}}\frac{|\sin\eta|^2}{|\eta|^2}\mathrm{d}\eta=o(1)
\end{align*}
for $t\gg1$. So, one can get
\begin{align*}
	\mbox{RHS of \eqref{Eq-*1}}=t|P_g|^2\int_{\mb{R}}\frac{|\sin\eta|^2}{|\eta|^2}\mathrm{d}\eta+o(t)\gtrsim t|P_g|^2
\end{align*}
for $t\gg1$, due to the fact that $\int_{\mb{R}}|\eta|^{-2}|\sin\eta|^2\mathrm{d}\eta$ is a positive constant.

For the case in the two-dimension $n=2$, we apply polar coordinates and $\eta=\sqrt{t}r$ to find
\begin{align}\label{Eq-*2}
\big(I_g(t;2)\big)^2&=\int_0^{\infty}\frac{|\sin(rt)|^2}{r}\int_{\mb{S}}|\widehat{g}(\omega r)|^2\mathrm{d}\sigma_{\omega}\mathrm{d}r\notag\\
	&\geqslant\int_0^{\infty}\mathrm{e}^{-r^2t}\frac{|\sin(rt)|^2}{r}\int_{\mb{S}}|\widehat{g}(\omega r)|^2\mathrm{d}\sigma_{\omega}\mathrm{d}r\notag\\
	&\geqslant\int_0^{\infty}\mathrm{e}^{-\eta^2}\frac{|\sin(\sqrt{t}\eta)|^2}{\eta}\int_{\mb{S}}\left|\widehat{g}\left(t^{-\frac{1}{2}}\omega\eta\right)\right|^2\mathrm{d}\sigma_{\omega}\mathrm{d}\eta,
\end{align}
where we considered $\mathrm{e}^{-r^2t}\leqslant 1$ for any $t\geqslant 0$. Again, employing the well-known Lebesgue dominated convergence theorem and $\widehat{g}(0)=P_g$, it holds
\begin{align*}
\lim\limits_{t\to\infty}\int_{\mb{S}}\left|\widehat{g}\left(t^{-\frac{1}{2}}\omega\eta\right)\right|^2\mathrm{d}\sigma_{\omega}=\int_{\mb{S}}|\widehat{g}(0)|^2\mathrm{d}\sigma_{\omega}=|P_g|^2|\mb{S}|,
\end{align*}
in other words, concerning $t\gg1$,
\begin{align*}
	\int_{\mb{S}}\left|\widehat{g}\left(t^{-\frac{1}{2}}\omega\eta\right)\right|^2\mathrm{d}\sigma_{\omega}\geqslant\frac{1}{2}|P_g|^2|\mb{S}|.
\end{align*}
Consequently, the shrinking of domain into $[t^{-\frac{1}{2}},1]$ implies
\begin{align*}
	\mbox{RHS of \eqref{Eq-*2}}
	&\geqslant\frac{1}{2}|P_g|^2|\mb{S}|\int_{t^{-\frac{1}{2}}}^1\mathrm{e}^{-\eta^2}\frac{|\sin(\sqrt{t}\eta)|^2}{\eta}\mathrm{d}\eta\\
	&=\frac{1}{4}|P_g|^2|\mb{S}|\left(\int_{t^{-\frac{1}{2}}}^1\mathrm{e}^{-\eta^2}\frac{1}{\eta}\mathrm{d}\eta-\int_{t^{-\frac{1}{2}}}^1\mathrm{e}^{-\eta^2}\frac{\cos(2\sqrt{t}\eta)}{\eta}\mathrm{d}\eta\right).
\end{align*}
For one thing, a direct consequence indicates
\begin{align*}
	\int_{t^{-\frac{1}{2}}}^1\mathrm{e}^{-\eta^2}\frac{1}{\eta}\mathrm{d}\eta\geqslant\mathrm{e}^{-1}\int_{t^{-\frac{1}{2}}}^1\frac{1}{\eta}\mathrm{d}\eta=\frac{1}{2\mathrm{e}}\ln t
\end{align*}
for $t\gg1$, and an integration by parts leads to
\begin{align*}
\int_{t^{-\frac{1}{2}}}^1\mathrm{e}^{-\eta^2}\frac{\cos(2\sqrt{t}\eta)}{\eta}\mathrm{d}\eta=\left(\mathrm{e}^{-\eta^2}\frac{\sin(2\sqrt{t}\eta)}{2\sqrt{t}\eta}\right)\Big|_{\eta=t^{-\frac{1}{2}}}^{\eta=1}+\int_{t^{-\frac{1}{2}}}^1\frac{\sin(2\sqrt{t}\eta)}{2\sqrt{t}}\mathrm{e}^{-\eta^2}\left(2+\frac{1}{\eta^2}\right)\mathrm{d}\eta.
\end{align*}
It immediately gives
\begin{align*}
\left|\int_{t^{-\frac{1}{2}}}^1\mathrm{e}^{-\eta^2}\frac{\cos(2\sqrt{t}\eta)}{\eta}\mathrm{d}\eta\right|\leqslant \frac{1}{\sqrt{t}}+1+\frac{1}{\sqrt{t}}\int_0^1\mathrm{d}\eta+\frac{1}{2\sqrt{t}}\int_{t^{-\frac{1}{2}}}^1\eta^{-2}\mathrm{d}\eta\lesssim 1
\end{align*}
as large-time $t\gg1$. Finally, we conclude that
\begin{align*}
\mbox{RHS of \eqref{Eq-*2}}\gtrsim \ln t\,|P_g|^2-c\gtrsim \ln t\,|P_g|^2
\end{align*}
for $t\gg1$, which demonstrates the desired estimate for $n=2$.
\end{proof}
\begin{coro}\label{Coro-Appendix}
Let us consider the free wave equation \eqref{Wave-Eq} for $n=1,2$ carrying initial datum $w_0\in L^2$ and $w_1\in L^1$. Then, the solution $w$ satisfies the following lower bound estimates:
\begin{align*}
\|w(t,\cdot)\|_{L^2}\gtrsim\begin{cases}
	\sqrt{t}\,|P_{w_1}|&\mbox{if}\ \ n=1,\\
	\sqrt{\ln t}\,|P_{w_1}|&\mbox{if}\ \ n=2,
\end{cases}
\end{align*}
for any $t\gg1$.
\end{coro}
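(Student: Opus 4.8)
The plan is to reduce everything to Proposition \ref{Prop-Improved-Ikehata} together with the solution representation in the Fourier space. Writing the solution of \eqref{Wave-Eq} via the partial Fourier transform, one has
\begin{align*}
	\widehat{w}(t,\xi)=\cos(|\xi|t)\widehat{w}_0(\xi)+\frac{\sin(|\xi|t)}{|\xi|}\widehat{w}_1(\xi),
\end{align*}
and by the Plancherel theorem $\|w(t,\cdot)\|_{L^2}=\|\widehat{w}(t,\cdot)\|_{L^2}$. First I would isolate the dominant contribution by the reverse triangle inequality, namely
\begin{align*}
	\|w(t,\cdot)\|_{L^2}\geqslant\left\|\frac{\sin(|\xi|t)}{|\xi|}\widehat{w}_1(\xi)\right\|_{L^2}-\left\|\cos(|\xi|t)\widehat{w}_0(\xi)\right\|_{L^2}=I_{w_1}(t;n)-\left\|\cos(|\xi|t)\widehat{w}_0(\xi)\right\|_{L^2},
\end{align*}
with $I_{w_1}(t;n)$ as in Proposition \ref{Prop-Improved-Ikehata}.

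Next, since $|\cos(|\xi|t)|\leqslant 1$ and $w_0\in L^2$, the subtracted term is controlled uniformly in time:
\begin{align*}
	\left\|\cos(|\xi|t)\widehat{w}_0(\xi)\right\|_{L^2}\leqslant\|\widehat{w}_0\|_{L^2}=\|w_0\|_{L^2}.
\end{align*}
Then I would apply Proposition \ref{Prop-Improved-Ikehata} with $g=w_1\in L^1$ to obtain $I_{w_1}(t;n)\gtrsim\sqrt{t}\,|P_{w_1}|$ when $n=1$ and $I_{w_1}(t;n)\gtrsim\sqrt{\ln t}\,|P_{w_1}|$ when $n=2$, for $t\gg1$. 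If $P_{w_1}=0$ the claimed lower bound is trivial, so one may assume $P_{w_1}\neq0$. Since $\sqrt{t}\to\infty$ (resp. $\sqrt{\ln t}\to\infty$) while $\|w_0\|_{L^2}$ is a fixed constant, for all sufficiently large $t$ one has $\|w_0\|_{L^2}\leqslant\tfrac{1}{2}I_{w_1}(t;n)$, hence
\begin{align*}
	\|w(t,\cdot)\|_{L^2}\geqslant\tfrac{1}{2}I_{w_1}(t;n)\gtrsim\begin{cases}
		\sqrt{t}\,|P_{w_1}|&\mbox{if}\ \ n=1,\\
		\sqrt{\ln t}\,|P_{w_1}|&\mbox{if}\ \ n=2,
	\end{cases}
\end{align*}
for $t\gg1$, which is the assertion.

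The substantive work is entirely contained in Proposition \ref{Prop-Improved-Ikehata}; the corollary itself is a short assembly step. The only point that genuinely needs to be checked is that the $w_0$-contribution remains bounded in $t$, so that it is absorbed by the growing lower bound of the $w_1$-part for large time; this is where the hypothesis $w_0\in L^2$ (rather than a weaker integrability) is used, and it is precisely what allows relaxing the $L^{1,1}$ assumption on $w_1$ in \cite[Theorems 1.1 and 1.2]{Ikehata=2022-wave} down to $L^1$, since no cancellation between the two data terms is exploited.
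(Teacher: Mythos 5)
Your proposal is correct and follows essentially the same route as the paper: the Fourier representation of the solution, isolation of the $\frac{\sin(|\xi|t)}{|\xi|}\widehat{w}_1$ term, control of the $\cos(|\xi|t)\widehat{w}_0$ contribution by $\|w_0\|_{L^2}$, and an application of Proposition \ref{Prop-Improved-Ikehata} with absorption of the bounded term for $t\gg1$. The only cosmetic difference is that you use the reverse triangle inequality on the norms while the paper uses the elementary inequality $2|f+g|^2\geqslant|f|^2-2|g|^2$ on the squared norms, which amounts to the same argument.
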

\begin{remark}
This result implies that the solution of free wave equation grows to infinite as $t\to\infty$ providing that $|P_{w_1}|\neq0$. Due to the upper bound estimates in \cite{Ikehata=2022-wave}, we may state the optimal growth estimates $\|w(t,\cdot)\|_{L^2}\simeq\ml{A}_n(t)$ for $t\gg1$ and $n=1,2$ by taking initial datum in $ L^2\cap L^1$ and $|P_{w_1}|\neq0$.
\end{remark}
\begin{proof}
It is well-known that the solution of \eqref{Wave-Eq} in the Fourier space is expressed by
\begin{align*}
\widehat{w}=\cos(|\xi|t)\widehat{w}_0+\frac{\sin(|\xi|t)}{|\xi|}\widehat{w}_1.
\end{align*}
Hence, according to $2|f+g|^2\geqslant|f|^2-2|g|^2$ and Proposition \ref{Prop-Improved-Ikehata}, we get
\begin{align*}
	\|\widehat{w}(t,\xi)\|_{L^2}^2&\geqslant\frac{1}{2}\left\|\ml{F}^{-1}_{\xi\to x}\left(\frac{\sin(|\xi|t)}{|\xi|}\widehat{w}_1\right)\right\|_{L^2}^2-\left\|\ml{F}^{-1}_{\xi\to x}\big(\cos(|\xi|t)\widehat{w}_0\big)\right\|_{L^2}^2\\
	&\gtrsim\begin{cases}
		t|P_{w_1}|^2-\|w_0\|_{L^2}^2&\mbox{if}\ \ n=1,\\
		\ln t\,|P_{w_1}|^2-\|w_0\|_{L^2}^2&\mbox{if}\ \ n=2,
	\end{cases}
\end{align*}
for $t\gg1$. Because of $w_0\in L^2$ as well as $w_1\in L^1$, our proof is completed.
\end{proof}
\section*{Acknowledgments}
The second author (Hiroshi Takeda) is supported in part by the Grant-in-Aid for Scientific Research (C)
(No. 19K03596) from Japan Society for the Promotion of Science.  The authors thank Xiufang Cui (Shanghai Jiao Tong University) for some discussions on elasticity in 2D and 3D.

\end{document}